\definecolor{darkgreen}{rgb}{0,0.5,0}
\definecolor{darkred}{rgb}{0.7,0,0}
\theoremstyle{plain}
\newtheorem{lemma}{Lemma}[section]
\newtheorem{thm}[lemma]{Theorem}
\newtheorem{prop}[lemma]{Proposition}
\newtheorem{cor}[lemma]{Corollary}
\newtheorem{pb}[lemma]{Problem}
\newtheorem{ques}[lemma]{Question}
\theoremstyle{definition}
\newtheorem{defn}[lemma]{Definition}
\newtheorem{rmk}[lemma]{Remark}
\numberwithin{equation}{section}
\newcommand{\D}{{\rm D}}
\newcommand{\of}{{\circ}}
\newcommand{\boundary}{\partial}
\newcommand{\partt}{ {\frac{\partial}{\partial t} } }
\renewcommand{\phi}{\varphi}
\newcommand{\ti}{\tilde}
\newcommand{\parts}{\frac{\partial}{\partial s} }
\newcommand{\al}{\alpha}
\newcommand{\be}{\beta}
\newcommand{\ga}{\gamma}
\newcommand{\de}{\delta}
\newcommand{\Haus}{\mathcal H}
\newcommand{\la}{\lambda}
\newcommand{\si}{\sigma}
\newcommand{\ep}{\varepsilon}
\newcommand{\curlX}{\mathcal X}
\newcommand{\Cone}{\mathcal C}
\newcommand{\R}{\ensuremath{{\mathbb R}}}
\newcommand{\N}{\ensuremath{{\mathbb N}}}
\newcommand{\B}{\ensuremath{{\mathbb B}}}
\newcommand{\downto}{\searrow}
\newcommand{\lap}{\Delta}
\newcommand{\grad}{\nabla}
\newcommand{\vol}{{ \rm Vol}}
\DeclareMathOperator{\AVR}{AVR}
\newcommand{\beq}{\begin{equation}}
\newcommand{\eeq}{\end{equation}}
\newcommand{\beqa}{\begin{equation}\begin{aligned}}
\newcommand{\eeqa}{\end{aligned}\end{equation}}
\newcommand{\brmk}{\begin{rmk}}
\newcommand{\ermk}{\end{rmk}}
\newcommand{\partref}[1]{\hbox{(\csname @roman\endcsname{\ref{#1}})}}
\newcommand{\Rm}{{\mathrm{Rm}}}
\newcommand{\Rc}{{\mathrm{Ric}}}
\newcommand{\Sc}{{\mathrm{R}}}
\newsavebox\CBox
\newcommand\hcancel[2][0.5pt]{%
  \ifmmode\sbox\CBox{$#2$}\else\sbox\CBox{#2}\fi%
  \makebox[0pt][l]{\usebox\CBox}%
  \rule[0.5\ht\CBox-#1/2]{\wd\CBox}{#1}}
\def\Xint#1{\mathchoice
{\XXint\displaystyle\textstyle{#1}}%
{\XXint\textstyle\scriptstyle{#1}}%
{\XXint\scriptstyle\scriptscriptstyle{#1}}%
{\XXint\scriptscriptstyle\scriptscriptstyle{#1}}%
\!\int}
\def\XXint#1#2#3{{\setbox0=\hbox{$#1{#2#3}{\int}$ }
\vcenter{\hbox{$#2#3$ }}\kern-.6\wd0}}
\def\dashint{\Xint-}
\title[Initial stability estimates for Ricci flow]{Initial stability estimates for Ricci flow and three dimensional Ricci-pinched manifolds}
\author{Alix Deruelle}
\address{Laboratoire de Math\'ematiques d'Orsay, Universit\'e Paris-Saclay, CNRS,  F-91405, Orsay, France}
\email{alix.deruelle@universite-paris-saclay.fr}
\author{Felix Schulze}
\address{Mathematics Institute, Warwick University, Gibbet hill road, Coventry CV4 7AL, UK}
\email{felix.schulze@warwick.ac.uk}
\author{Miles Simon}
\address{Institut f\"ur Analysis und Numerik (IAN), Universit\"at Magdeburg, Universit\"atsplatz~2\\${\ }\ \, $ 39106 Magdeburg, Germany }
\email{msimon@ovgu.de}
\begin{document}

\begin{abstract}
This paper investigates the question of stability for a class of Ricci flows   which start at possibly non-smooth    metric spaces.  We show that if the initial metric space is  Reifenberg   and locally bi-Lipschitz to Euclidean space,   then  two solutions to the Ricci flow whose Ricci curvature is uniformly bounded from below and whose curvature is bounded by $c\cdot t^{-1}$ converge to one another at an exponential rate once they have been appropriately gauged. As an application, we show that smooth three dimensional, complete, uniformly Ricci-pinched Riemannian manifolds with bounded curvature are either compact or flat, thus confirming a conjecture of Hamilton and Lott.
\end{abstract}  

\maketitle

\parskip=0.5pt
\tableofcontents
\parskip=0pt






\section{Introduction}
\subsection{Overview}\label{overview}
In this paper, we consider smooth solutions $(M^n,g(t))_{t\in (0,T)}$ to Ricci flow defined on smooth, connected manifolds satisfying for $t\in(0,T)$,
\begin{equation}\label{CurvatureCond}
\quad |\Rm(g(t))| \leq \frac{c_0}{ t}\ \text{ and }\ \Rc(g(t)) \geq -g(t),
\end{equation} 
where $c_0$ is a positive time-independent constant. Notice that we don not assume the metrics $g(t)$ to be complete a priori.

This   setting has been shown to occur in many situations, one  prominent one being that of expanding solitons with non-negative curvature operator coming out of cones with non-negative curvature operator: see for example \cite{SchulzeSimon}, \cite{Der-Smo-Pos-Cur-Con}, \cite{SiTo2}, \cite{BamCab-RivWil}.

Using Lemma \ref{SiTopThm1} (see also \cite[Lemma 3.1]{SiTo2}), we see that the above  setting guarantees that  the distances    $d_t \coloneqq  d(g(t))$  on $M$ converge locally in a strong sense. More explicitly, assuming \eqref{CurvatureCond},   we have:
\begin{equation}\label{DistCond}
\begin{split}
 \text{ for all } x_0 \in M, \,& \text{there exists a connected open neighbor  hood of $x_0,$ }  U_{x_0} \Subset M\\
 \text{ and } &S  >0  \text{ such that for $0<s\leq t<S ,$}  \\ 
 &e^{t-s}d_s  \geq d_t  \geq d_s - c_0  \sqrt{t-s}\,  \text{ on  }  U_{x_0},
\end{split}
\end{equation}
and hence   there exists a unique limit $d_0 \coloneqq  \lim_{t\downto 0}d_t$ on $U_{x_0}$, which is attained uniformly, such that $(U_{x_0},d_0)$ is a metric space. 
Note that if for different points $x_0,y_0\in M$ we obtain $d_0= \lim_{t\downto 0}d_t$ on $U_{x_0}$ and $\ti d_0= \lim_{t\downto 0}d_t$ on $U_{y_0}$ then we have $d_0 = \ti d_0$ on $ U_{x_0} \cap U_{y_0},$ since they both are obtained as the uniform  limit of $d(g(t))$  as $t\to 0$. For this reason we  do not include a quantifier depending on $x_0$ in the definition of $d_0 \coloneqq  \lim_{t\downto 0}d_t$ on $U_{x_0}.$ 
We are interested in the following problem: 
\begin{pb} \label{main-pb}
Let $(M_i^n,g_i(t))_{t\in(0,T)}, i=1,2$
be two smooth, connected  (possibly incomplete)  Ricci flows satisfying
\eqref{CurvatureCond}  and \eqref{DistCond} and converging locally to the same  metric space, up to an isometry  as $t$  tends to $0$, that is: 
$ \lim_{t\downto 0}d(g_1(t)) = d_{0,1}$ on $U_{p,1}$
 $\lim_{t\downto 0}d(g_2(t)) = d_{0,2},$  on $U_{p,2}$
and $\psi_0(U_{p,1}) =  U_{p,2}$, where $\psi_0:U_{p,1} \to U_{p,2} = \psi_0 (U_{p,1})$ is a homeomorphism with 
$ \psi_0^*(d_{0,2}) = d_{0,1}.$ Then we are concerned with the following problems:\\[1ex]
What further assumptions on the regularity of $d_0$ ensure that\\[-1.5ex]
\begin{enumerate}
\item  there is a suitable {\it gauge} in which we can compare the solutions $g_1$ and $g_2$ effectively?\\[-1ex]
\item  the solutions $(g_1(t))_{t\in(0,T)}$ and $(g_2(t))_{t\in(0,T)}$ remain close  to one another for $t$ close to zero in this gauge?
\end{enumerate}
\end{pb}

Our fundamental regularity assumption on $d_0$ is the  following {\it Reifenberg} property: 
\begin{equation}
\label{ReifenbergCond}
 \text{ for all } p \in M , \text{ for all } x \in U_p,  \text{ every tangent cone at  } x \text{ of } (U_p,d_0)  \text{ is }(\R^n,d(\delta)),
\end{equation} 
where $d(\delta)$ stands for Euclidean distance.
In fact,   if  $(M,d_0)$ is the local limit of a non-collapsing sequence of complete, smooth Riemannian manifolds with bounded curvature and Ricci curvature uniformly bounded from below,       this condition   can be turned into a uniform Reifenberg condition (see Lemma \ref{UniformReifenbergLemma}), 
\begin{equation}
\begin{split}
\label{UniformReifenbergCond}
& \text{for all  $p \in M$}\text{ and  for all $\ep>0$, there exist $r>0$ and a neighborhood $U_p\Subset M$}    \\[-0.5ex]
& \text{such that}\,\, d_{GH}(B_{s^{-1}d_0}(x,1),\B(0,1)) < \ep, \text{ for all } s <r , 
 \text{ and for all } x \in U_p\\[-0.5ex]
   &\text{such that } B_{d_0}(x,s) \Subset U_p. 
\end{split}
\end{equation}

Assumption \eqref{ReifenbergCond} is infinitesimal and means that there are no {\it singular}  points in $(U_p,d_0)$ for any $p \in M$. However, it does not   necessarily mean that the distance is induced by a H\"older continuous Riemannian metric, let alone a Lipschitz Riemannian metric: see \cite[Theorem $1.2$]{Col-Nab-Branching} for such a counterexample.
The same paper does   however show, under the assumption that  $(U_p,d_0)$ is a complete, uniformly Reifenberg, non-collapsed Ricci limit space, that there does exist a bi-Lipschitz embedding into $\R^N$ for a sufficiently large $N$ around each point $x\in U_p$. 

The Reifenberg condition allows us to show that a uniform Pseudolocality estimate holds:  see Lemma \ref{lem:curv_decay}. In the case that the solution  is complete and has bounded curvature, we use  Perelman's Pseudolocality and the uniform Reifenberg result from Lemma \ref{UniformReifenbergLemma} to prove this. In the general case, we use Lemma \ref{pseudorefienberg}, which is valid in the setting we consider here. 

We assume as a further regularity assumption on $d_0$ that there is a bi-Lipschitz chart around each point in $(U_p,d_0)$  given by distance coordinates, and that the Lipschitz constant is $\ep_0$ close to $1$:
\begin{gather}
\label{CoordCond}
\begin{split}
& \text{For any $x_0\in U_p$, there exist   radii  $\ti R> 2R> 0 $ such that $B_{d_0}(x_0,\ti R  ) \Subset U_{p}$ } \\[-0.5ex]
& \text{and points $a_1, \ldots, a_n \in B_{d_0}(x_0,\ti R )$ such that the map } \\[-0.5ex]
& D_0: \, \left\{
   \begin{array}{rcl}
       B_{d_0}(x_0,\ti R)  & \rightarrow &\R^n \cr
       x& \rightarrow & (d_0(a_1,x)-d_0(a_1,x_0), \ldots, d_0(a_n,x)-d_0(a_n,x_0)),
   \end{array}\right.   \\[-0.5ex]
&  \text{is a }   (1+\ep_0)  \text{ bi-Lipschitz homeomorphism from $B_{d_0}(x_0,2R)$ onto its image.  }  \cr  
\end{split}
\end{gather}

Assumption \eqref{CoordCond} is local and always holds true in the case that  $(U_p,d_0)$ is an Alexandrov space with curvature bounded from below  satisfying   \eqref{ReifenbergCond}: See \cite[Theorem $10.8.4$]{BBI}.
For example, this would be the case, if we assume 
\eqref{ReifenbergCond} and replace  the lower bound on the Ricci curvature in \eqref{CurvatureCond}, by ${\rm sec}_{g(t)} \geq -1$, $t\in(0,T)$, where ${\rm sec}$ refers to sectional curvature. \\

We use the following standard notation in the rest of the paper:
given a solution $(M,g(t)))_{t\in [0,T)}$ to Ricci flow and a Riemannian manifold $(N,h),$ we call a map  
  $F: H \times [0,T]  \to N$   a {\it solution to the Ricci Harmonic map heat flow} if
  $\partt F(\cdot,t)= \lap_{g(t),h} F(\cdot,t)$  on $H.$ If $F_t:=F(\cdot,t)$ is a diffeomorphism, then  it is well known that 
  $\ti g(t):= (F_t)_{*}g(t)$ is a solution to the $h$-Ricci DeTurck flow, which is a parabolic evolution equation.
  In the case that $N$ is a subset of $\R^n$ and $h=\de,$ each  of the components of $F= (F^1 ,\ldots, F^n )$ is a solution to the heat flow where the metric evolves by  Ricci flow : $\partt  F^i(\cdot,t)= \lap_{g(t)}  F^i(\cdot,t),$ for $i\in \{1, \ldots,n\}.$  
 See Chapter 6  of \cite{HamFor} for further definitions   and explanations. 
 
We establish the following initial stability estimate which addresses the question posed in Problem \ref{main-pb}.

\begin{thm}\label{thm:main.1}
  For all $n\in \N$ and $\be_0\in (0,1)$ there exists $\ep_0=\ep_0(n,\be_0)>0$, depending only on $\be_0$ and $n$,  such that the following holds. Let $(M_i^n,g_i(t))_{t\in (0,T)}$, $i=1,2$, be smooth solutions to Ricci flow (not necessarily with complete time slices)  both  satisfying \eqref{CurvatureCond} and \eqref{DistCond}, such that  the locally well defined metrics at time zero agree (up to an isometry),  that is  for arbitrary $p\in M_1,$ 
 $ \lim_{t\downto 0}d(g_1(t)) = d_{0,1}$ on $U_{p,1}$, and 
 $\lim_{t\downto 0}d(g_2(t)) = d_{0,2},$  on $U_{\psi_0(p),2}$  
for  an  isometry    $\psi_0:(U_{p,1},d_{0,1}) \to (U_{\hat p,2} =\psi_0 (U_{p,1}), d_{0,2}),$ $\hat p:= \psi_0(p),$ that is
 $\psi_0:U_{p,1} \to U_{\hat p,2} = \psi_0 (U_{ p,1})$ is a homeomorphism with 
$ \psi_0^*(d_{0,2}) = d_{0,1}.$  

We assume \eqref{ReifenbergCond} holds true for the locally defined metric $d_0= {d_{0,1}},$   that    $p\in M_1,$
$x_0 \in U_{p,1},$   and that $\eqref{CoordCond}$  holds for some $\ti R> 2R >0,$ for   the locally defined metric $d_0= {d_{0,1}}$ on $U_p= U_{p,1}$ (and hence for $d_0= {d_{0,1}}$  on  $U_p= U_{p,2}$). 

Then       there exists an $R_0\in(0,1)$ and $ 0<T_0,C_0< \infty $    and solutions $(\ti{g}_1(t))_{t\in(0,T_0)}$ and $(\ti g_2(t))_{t\in(0,T_0)}$ to $\de$-Ricci-DeTurck flow defined on $\B(0,R_0)\times(0,T_0)$ satisfying the following:
 \begin{enumerate}
 \item The metrics $\ti{g}_1(t)$ and $\ti g_2(t)$ are $(1\pm \be_0)$ close to the $\delta$-metric for $t\in(0,T_0)$,\\[-2ex]
 \item On  $\B(0,R_0)$,
  \begin{equation}\label{exp-dec-main-thm}
|\ti g_1(t) -\ti g_2(t)|_{\de}\leq \exp\left(-\tfrac{C_0}{t}\right), \quad\text{for all $t\in(0,T_{0}]$.}
\end{equation}
Moreover, for each $k\geq 0$, there exists $C_k>0$ such that on $\B(0,R_0)$:
\begin{equation}\label{exp-dec-main-thm-higher-cov-der}
|D^k\left(\ti g_1(t) -\ti g_2(t)\right)|_{\de}\leq \exp\left(-\tfrac{C_k}{t}\right), \quad\text{for all $t\in(0,T_{0}]$.}
\end{equation}

\item Moreover, $\ti{g}_1(t)=(F_1(t))_{\ast}g_1(t)$ and $\ti g_2(t)=(F_2(t))_{\ast}g_2(t),$ where $( F_1(t))_{t\in(0,T_0)}$ and $(F_2(t))_{t\in(0,T_0)}$ are smooth families of bi-Lipschitz maps on $B_{d_{0,1}}(x_0, \frac{3}{2}R_0)$ respectively $B_{d_{0,2}}(\psi_0(x_0), \frac{3}{2}R_0)$ that satisfy the following:\\[-2ex]
\begin{enumerate} 
\item The family of maps $(F_1(t))_{t\in(0,T_0)}$ (respectively $(F_2(t))_{t\in(0,T_0)}$) is a solution to the Ricci-harmonic map flow with respect to the Ricci flow solution $(g_1(t))_{t\in(0,T_0)}$ (respectively $(g_2(t))_{t\in(0,T_0)}$).\\[-2ex]
\item The initial value for $F_1$ is   $D_0$ and for $F_2$ is $D_0\, \of\, (\psi_0)^{-1}$ and the   convergence rate at $t=0$ is 
\begin{equation}
\begin{split}
 | F_1(t)- D_0| &\leq C_0\sqrt{t},\quad t\in(0,T_0),\,\,\text{ on $B_{d_{0,1}}\big(x_0,\tfrac{3}{2}R_0\big) $, }\\
 | F_2(t)- D_0\, \of\, (\psi_0)^{-1}| &\leq C_0\sqrt{t},\quad t\in(0,T_0),\,\, \text{ on $B_{d_{0,2}}\big(\psi_0(x_0),\tfrac{3}{2}R_0\big),$}
 \end{split}
 \end{equation}
  where  $D_0$ are the distance coordinates on $B_{d_{0,1}}(x_0,\frac{3}{2}R_0) $ coming from \eqref{CoordCond}.\\[-2ex]
\item The distances $\ti d_{1}(t) \coloneqq d(\ti g_{1}(t))$ and $\tilde{d}_{2}(t)\coloneqq 
d(\tilde{g}_2(t))$ converge, uniformly to the same distance $(D_0)_{\ast}d_{0,1}$ on $\B(0,R_0)$ as $t$ approaches $0$.
\end{enumerate}
\end{enumerate}
\end{thm}

 As an application of this result, we show that the approach of Lott \cite{Lott-Ricci-pinched} leads to a full resolution of a conjecture posed by Hamilton \cite[Conjecture $3.39$]{CLN} (in the setting of bounded curvature) and Lott \cite[Conjecture $1.1$]{Lott-Ricci-pinched}.
 
Recall that a Riemannian manifold $(M^n,g)$ is uniformly Ricci pinched if $\Rc(g)\geq 0$ and there exists a constant $c>0$ such that on $M$, $$\Rc(g)\,\geq\,c\,\Sc_g\,g,$$ in the sense of quadratic forms where $\Sc_g$ denotes the scalar curvature of the metric $g$. Notice that such a condition is invariant under rescalings.

\begin{thm}\label{thm:main.2}
Let $(M^3,g)$ be a smooth complete Riemannian manifold with bounded and uniformly pinched Ricci curvature. Then $(M^3,g)$ is either diffeomorphic  to a spherical space form or flat. In particular, if $M$ is non-compact then $(M^3,g)$ is flat.  
\end{thm}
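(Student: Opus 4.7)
The plan is to treat the compact and non-compact cases separately; in the non-compact case the strategy is to reach a contradiction by combining Lott's asymptotic cone analysis with the initial stability estimate of Theorem~\ref{thm:main.1}, thereby supplying the uniqueness ingredient that was missing in \cite{Lott-Ricci-pinched}.

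For the compact case, I would run Ricci flow on $(M^3, g)$ for a short time and use Hamilton's tensor maximum principle to preserve both $\Rc \geq 0$ and the pinching $\Rc \geq c\, \Sc\, g$. The scalar curvature evolution $\partial_t \Sc = \Delta \Sc + 2|\Rc|^2$ propagates strict positivity of $\Sc$ for $t > 0$ unless $\Sc \equiv 0$, in which case the pinching forces $\Rc \equiv 0$ and hence flatness in dimension three. In the strictly positive case $\Rc(g(t)) > 0$, and Hamilton's 1982 theorem produces a smooth isotopy of $g$ to a metric of constant positive sectional curvature, so $(M^3, g)$ is smoothly isotopic to a spherical space form.

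In the non-compact case I would argue by contradiction and assume $(M^3, g)$ is non-flat. Following Lott \cite{Lott-Ricci-pinched}, the uniform pinching combined with Bishop--Gromov and the Hamilton--Ivey pinching yields quadratic curvature decay at infinity, $|\Rm(x)| \leq C/d_g(p, x)^2$, extended to all covariant derivatives by Shi estimates. Thus for $\lambda_i \to \infty$, the rescalings $(M, \lambda_i^{-2} g, p)$ sub-converge smoothly off $p$ to a smooth Ricci-flat manifold and in pointed Gromov--Hausdorff topology to a non-collapsed metric cone $(C, d_C, o_C)$ by Cheeger--Colding. In dimension three every smooth Ricci-flat metric is flat, so $C$ is isometric to some $\R^3/\Gamma$, $\Gamma$ a finite subgroup of $O(3)$; the goal becomes to show $\Gamma = \{1\}$, whence $\AVR(M, g) = 1$ and Colding's volume rigidity gives $(M^3, g) \cong \R^3$, contradicting non-flatness.

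To prove $\Gamma = \{1\}$, I would produce two Ricci flows on $C$ and match them via Theorem~\ref{thm:main.1}. First, the rescaled flows $g_i(t) \coloneqq \lambda_i^{-2} g(\lambda_i^2 t)$ satisfy \eqref{CurvatureCond} uniformly (via the curvature decay and Lemma~\ref{lem:curv_decay}) and \eqref{DistCond} via Lemma~\ref{SiTopThm1}, so sub-converge to a smooth Ricci flow $(g_\infty(t))_{t > 0}$ on a neighborhood of $o_C$ realizing $d_C$ at time zero. Second, by Schulze--Simon \cite{SchulzeSimon}/Deruelle \cite{Der-Smo-Pos-Cur-Con}, the cone $\R^3/\Gamma$ is the initial datum of a smooth expanding gradient Ricci soliton $(h(t))_{t > 0}$ with $\Rm \geq 0$ satisfying \eqref{CurvatureCond} and \eqref{DistCond}. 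Because $C$ is an Alexandrov space of non-negative curvature, \eqref{CoordCond} holds at every $x_0 \in C$ by \cite[Theorem 10.8.4]{BBI}, while \eqref{ReifenbergCond} holds on $C \setminus \{o_C\}$ by flatness. Applying Theorem~\ref{thm:main.1} at each $x_0 \in C \setminus \{o_C\}$ and patching along the connected regular part, the exponential estimate \eqref{exp-dec-main-thm} identifies $g_\infty$ and $h$ up to isometry on $C \setminus \{o_C\}$; combined with the self-similar structure of $h$ and the $3$-dimensional classification of expanders with $\Rm \geq 0$ out of flat cones, this forces $h$ to be the trivial Gaussian expander and $\Gamma = \{1\}$. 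The hard part is precisely this final matching step, because \eqref{ReifenbergCond} fails at $o_C$ when $\Gamma \neq \{1\}$, so the stability estimate is only directly available on the regular part of $C$; propagating the exponential matching inward toward the vertex requires the uniform pseudolocality of Lemma~\ref{lem:curv_decay}, the sharp rate \eqref{exp-dec-main-thm}, and a covering-continuation argument using the Ricci-harmonic map flow gauges furnished by Theorem~\ref{thm:main.1}.
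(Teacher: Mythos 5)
Your compact case is fine and matches the paper's (which simply quotes Hamilton). The non-compact case, however, contains a gap at exactly the point where the real difficulty sits. You assert that uniform pinching together with Bishop--Gromov and Hamilton--Ivey yields quadratic curvature decay $|\Rm(x)|\leq C/d_g(p,x)^2$ for the \emph{initial} metric, and from this you deduce smooth convergence of the blow-downs away from the basepoint and hence that the asymptotic cone is a smooth flat cone $\R^3/\Gamma$. No such decay is available: a quadratically decaying lower sectional bound was precisely the \emph{extra hypothesis} under which Lott (and before him Chen--Zhu with $\sec\geq 0$) had already proved the theorem, and removing it is the whole point here. Without it the asymptotic cone has no a priori regularity and does not satisfy any elliptic equation, so one cannot "make sense of the pinching" on it. The paper instead blows down Lott's \emph{flow}, identifies the $t=0$ limit as a metric cone over a $2$-dimensional Alexandrov space of curvature $\geq 1$ using $\operatorname{RCD}$ theory (Ketterer, Cavalletti--Milman, Lytchak--Stadler), and proves the Reifenberg property away from the vertex by blowing up the flow at such points and invoking Hochard's splitting theorem (Proposition \ref{prop:Hochard-splitting}) plus the pinching to force the tangent flows to be static $\R^3$ --- not by smooth convergence of the static blow-downs.

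Your endgame is also not quite right. Theorem \ref{thm:main.1} gives exponential \emph{closeness}, not an isometric identification of $g_\infty$ and $h$, and the difficulty is not "propagating the matching inward toward the vertex": the paper compares the two flows only on a fixed annulus away from the vertex, transfers the pinching to the expander with an error $\exp(-C_0/t)$, and then uses the self-similarity $g_2(t)=t\Phi_t^*g_2(1)$ (the Morse flow of the potential $f$) to propagate the resulting inequality \emph{outward to infinity}, obtaining $\Rc(g_2(1))\geq \lambda_0\Sc_{g_2(1)}g_2(1)-e^{-C_0 f}g_2(1)$ on all of $M$; the vertex never enters. The argument is then closed by the rigidity result for \emph{almost} Ricci-pinched expanders with non-negative Ricci curvature (Proposition \ref{prop-rig-alm-ric-pin-sol}), which forces the expander, hence the cone, to be Euclidean, and finally Bishop--Gromov rigidity applied to $\operatorname{AVR}(g_1)=\omega_3$ gives the contradiction. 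Your appeal to a "classification of expanders with $\Rm\geq 0$ out of flat cones" to conclude $\Gamma=\{1\}$ both presupposes the unjustified flatness of the cone and would not by itself rule out nontrivial $\Gamma$ without using the transferred pinching quantitatively.
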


Hamilton  introduced the Ricci flow in \cite{ham3D}, and in the case that $(M^3,g)$  is compact with non-negative uniformly pinched  Ricci curvature and the scalar curvature is positive at least at one point, the paper shows that the volume preserving Ricci flow of $(M^3,g)$ exists for all time and converges smoothly to a spherical space form. 
In the case that $(M^3,g)$  is compact and has non-negative  uniformly pinched  Ricci curvature and the scalar curvature is zero everywhere, then $M^3$ is Ricci-flat and hence flat. That is, the results of Hamilton imply Theorem \ref{thm:main.2}   immediately in the case that $M^3$ is compact.  

In case $(M^3,g)$ is non-compact with bounded curvature, Lott has proved Theorem \ref{thm:main.2} under the assumption that the sectional curvature of $g$ has a negative lower bound whose absolute value decays at least quadratically in the distance from a fixed point, improving a result of Chen-Zhu \cite{Chen-Zhu} where it is assumed that the metric $g$ has non-negative sectional curvature. Finally, Theorem \ref{thm:main.2} can be interpreted  as   an extension of Myers' theorem in dimension $3$.

Lott's approach to Hamilton's conjecture has two major  parts, which we briefly explain here.   The proof is by contradiction and the first part deals with Ricci flow and it does not use the lower bound on the sectional curvature. Lott shows the following crucial result:
\begin{thm}[\cite{Lott-Ricci-pinched}]\label{thm-lott-ricci-flow}
Let $(M^3, g_0)$ be a smooth, complete non-compact Riemannian 3-manifold which is uniformly Ricci pinched with bounded curvature.
Then there exists a complete Ricci flow solution $(M^3,g(t))$ that exists for all $t\geq 0$ with $g(0) = g_0$ such that:
\begin{enumerate}
\item the solution is Type III, i.e.~$|\Rm(g(t))|\leq \frac{C}{t}$ for all $t>0$ and some uniform positive constant $C$,
\item the solution is uniformly $c$-pinched, i.e.~there exists $c>0$ such that for $t>0$, $\Rc(g(t))\geq c\,\Sc_{g(t)}g(t)$ on $M$,
\item  If $\Sc_{g_0}(x_0)>0$ at some point, then the solution is uniformly non-collapsed for all positive time, i.e. 
\begin{equation*}
\operatorname{AVR}(g(t))\coloneqq \lim_{r\rightarrow+\infty}r^{-3}\vol_{g(t)}(B_{g(t)}(x_0,r))=v_0>0,
\end{equation*}
 for some (hence any) $x_0\in M$, $t>0$ and some time-independent positive constant $v_0$.
\end{enumerate}
\end{thm}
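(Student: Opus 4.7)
The plan is to prove the three items in sequence: Shi's short-time existence plus Hamilton's cone-preservation argument yields item (2); a blow-up/compactness argument then yields item (1) and, in particular, rules out finite-time blow-up; and Perelman's no-local-collapsing combined with Bishop--Gromov yields item (3).

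\medskip
\noindent\textbf{Short-time existence and pinching.} First I would apply Shi's theorem to produce a smooth Ricci flow $(g(t))_{t\in[0,T_{\max})}$ with bounded curvature on each compact sub-interval. In dimension three the full curvature tensor is algebraically determined by $\Rc$, so preservation of curvature conditions reduces to a pointwise ODE analysis on symmetric $2$-tensors. Following Hamilton, I would apply the ODE--PDE maximum principle to the reaction $\partial_t A = A^2 + A^{\sharp}$ and verify that the convex cone $\mathcal{C}_c := \{A : A\geq 0,\ A\geq c\,\tr(A)\,g\}$ is invariant for every $c\in(0,1/3]$ (a direct calculation showing that the reaction vector points inward on $\partial\mathcal{C}_c$). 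This propagates $\Rc(g(t))\geq 0$ and $\Rc(g(t))\geq c\,\Sc_{g(t)}\,g(t)$ throughout $[0,T_{\max})$.

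\medskip
\noindent\textbf{Type III via blow-up.} To obtain the Type III estimate, I would argue by contradiction: suppose $t_k|\Rm(g(t_k))|(x_k)\to\infty$ along some sequence (covering both finite-time blow-up and failure of Type III). Apply Perelman-style point-picking at scale $|\Rm|^{-1/2}$ to produce near-maximising points $(x_k',t_k')$, and parabolically rescale by $\lambda_k := |\Rm(g(t_k'))|(x_k')\to\infty$. The rescaled flows $\ti g_k(s)$ then satisfy $|\Rm|\leq 4$ on parabolic balls of radius $A_k\to\infty$ around $(x_k',0)$, with $|\Rm|(x_k',0)=1$, and inherit the scale-invariant bounds $\Rc\geq 0$ and $\Rc\geq c\,\Sc\,g$. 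The Hamilton--Ivey pinching estimate, specific to dimension three, upgrades any smooth limit to $\Rm\geq 0$; Bishop--Gromov comparison then supplies the injectivity radius bound needed for Hamilton's compactness theorem. Extracting a subsequential pointed limit yields a complete smooth ancient Ricci flow $(M_\infty,g_\infty(s),x_\infty)$ in dimension three with bounded $\Rm\geq 0$, strict uniform pinching $\Rc_\infty\geq c\,\Sc_\infty\,g_\infty$, and $|\Rm_\infty|(x_\infty,0)=1$. Since $M$ is non-compact, the $\ti g_k$ have unbounded diameter around $x_k'$, so $M_\infty$ is likewise non-compact.

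\medskip
\noindent\textbf{Main obstacle, and AVR.} The hardest step is to rule out any such non-flat ancient limit. If $\sec_{g_\infty}$ vanishes in some direction somewhere, Hamilton's strong maximum principle for the curvature operator in $3$D forces a local isometric splitting $\R\times\Sigma$; the $\R$-factor gives a zero eigenvalue of $\Rc_\infty$, which combined with strict pinching $c>0$ forces $\Sc_\infty\leq 0$, and together with $\Sc_\infty\geq 0$ this yields $\Rc_\infty\equiv 0$, contradicting $|\Rm_\infty|(x_\infty,0)=1$. Otherwise $\Rm_\infty>0$ strictly, and Perelman's classification of $3$D $\kappa$-solutions shows that $(M_\infty,g_\infty)$ must be a finite quotient of the shrinking round sphere or admit cylindrical asymptotics modelled on $\Sp^2\times\R$; the compact case is excluded by non-compactness of $M_\infty$, while cylindrical ends force $\lambda_{\min}(\Rc)/\Sc\to 0$, violating $c>0$. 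This establishes $|\Rm(g(t))|\leq C/t$ and so $T_{\max}=\infty$ by the standard extension criterion. For the last item, assume $\Sc_{g_0}(x_0)>0$: the strong maximum principle on $\partial_t\Sc=\Delta\Sc+2|\Rc|^2$ gives $\Sc_{g(t)}>0$ everywhere for $t>0$, hence $\Rc(g(t))>0$ by pinching; Bishop--Gromov monotonicity of $r\mapsto r^{-3}\vol_{g(t)}B_{g(t)}(x_0,r)$ together with a lower bound $\vol_{g(t)}B_{g(t)}(x_0,\sqrt{t})\geq \kappa\,t^{3/2}$ from Perelman's no-local-collapsing theorem (applicable thanks to the Type III bound on $[0,t]$) then yield $\AVR(g(t))\geq v_0>0$ uniformly in $t>0$.
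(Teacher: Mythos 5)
The paper offers no proof of Theorem \ref{thm-lott-ricci-flow}: it is quoted from Lott, and in Section \ref{sec-comp-ric-pinch} the three items are cited respectively to Lott's Propositions 2.1 and 2.13, to Hamilton's Theorem 9.4 together with Lott's Proposition 2.1, and to Lott's Propositions 1.5, 3.1 and 4.1. So your proposal must be judged against those sources. Your item (2) (Shi existence, Hamilton's ODE--PDE cone argument for $\{\Rc\geq 0\}\cap\{\Rc\geq c\,\Sc\, g\}$, extended to the complete non-compact case by the Chen--Zhu version of the maximum principle) is the standard argument and is fine, and your blow-up argument does rule out finite-time singularities, since there Perelman's no-local-collapsing constant $\kappa(T)$ is harmless.

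The genuine gaps are in items (1) and (3), and they are really the same gap. The Type III bound forces you to blow up at points $(x_k',t_k')$ with $t_k'\to\infty$, and there you have no uniform non-collapsing: Perelman's theorem gives $\kappa=\kappa(T)$ on $[0,T]$, which degenerates as $T\to\infty$, and Bishop--Gromov cannot substitute for it -- under $\Rc\geq 0$ it says $r\mapsto r^{-3}\vol B(x,r)$ is \emph{non-increasing}, so it never produces an injectivity radius or volume \emph{lower} bound at the blow-up scale. The same monotonicity breaks your proof of (3): since $\AVR(g(t))$ is the \emph{infimum} over $r$ of the volume ratios, a lower bound on $\vol_{g(t)}B_{g(t)}(x_0,\sqrt t)$ bounds $\AVR(g(t))$ only from \emph{above}, not from below. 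The positivity of the asymptotic volume ratio is precisely the new, hard content of Lott's paper (his Propositions 1.5, 3.1, 4.1); it is not a corollary of no-local-collapsing, and it is what makes the large-time blow-ups -- hence the Type III estimate -- available. Finally, your appeal to Perelman's classification of three-dimensional $\kappa$-solutions both presupposes the missing uniform $\kappa$ and is unnecessary: once a blow-up limit is known to be complete, non-compact, with bounded non-negative sectional curvature (Hamilton--Ivey) and uniformly Ricci-pinched, the static theorem of Chen--Zhu \cite{Chen-Zhu}, which this paper cites, already forces it to be flat, contradicting $|\Rm_\infty|(x_\infty,0)=1$.
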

 See Section \ref{sec-comp-ric-pinch} for details. 

\begin{rmk} Shortly after this paper was released on ArXiv, the assumption
of bounded sectional curvature in \cite[Proposition
1.5]{Lott-Ricci-pinched} on the existence of an immortal solution to the Ricci flow,
was removed by Lee and Topping \cite{Lee-Top-3d}. Combining their work
with our Theorem \ref{thm:main.2} leads to a proof of Hamilton’s conjecture
without assuming the initial Ricci-pinched metric has bounded
curvature.
\end{rmk}

Notice that the second part of Lott's work does not use the Ricci flow. 
 In the following exposition, we only consider solutions from Theorem \ref{thm-lott-ricci-flow} where  (3) holds.  If one further assumes that the sectional curvature of the initial space is bounded from below by $-\frac{A}{r^2}$ with $r$ being the distance, and one blows down the solution, one obtains a solution having  the same properties, in some weak sense, coming out of any asymptotic cone of $(M^3,g_0)$. Based on results of Lebedeva-Petrunin \cite{Leb-Pet} using methods of Alexandrov geometry, it is then a static problem to show that such asymptotic cones must be flat which leads to a contradiction.

This is where our approach differs, as we will now explain. 
After blowing down Lott's Ricci flow solution, we first notice that the initial condition is a metric cone with no cone points outside the apex. This follows from a splitting theorem due to Hochard, see Proposition \ref{prop:Hochard-splitting}, for Type III non-collapsed solutions to the Ricci flow with non-negative Ricci curvature coming out of a metric space splitting a line. This result holds in any dimension. In particular, assumption \eqref{ReifenbergCond} is satisfied. Invoking results on \textit{$\operatorname{RCD}$} spaces (see Section \ref{sec-comp-ric-pinch} for references), the initial metric cone turns out to be an Alexandrov space with non-negative curvature. Therefore, Assumption \eqref{CoordCond} is satisfied as well. 
 In particular, the link of the cone is a two dimensional Alexandrov space with curvature not less than one, and may be approximated by smooth spaces with curvature larger than one, in view of results from the theory of Alexandrov spaces: See the references in Section \ref{sec-comp-ric-pinch}. 
Now, a consequence of the work of the first author \cite{Der-Smo-Pos-Cur-Con}   (alternatively the second two authors \cite{SchulzeSimon}, after approximating the cone by a smooth space with non-negative curvature operator, as in  \cite[Section 3.2]{PhDschlichting})    ensures the existence of a self-similarly expanding solution   coming out of the same initial metric cone that will serve as a comparison solution. This solution is an expanding gradient Ricci soliton with non-negative curvature operator which is not necessarily Ricci-pinched. If this would have been the case, we would be done by the work of \cite{Chen-Zhu}.  
Using Theorem \ref{thm:main.1} to compare this solution with the original Ricci pinched solution, we see that this non-negatively curved expanding soliton solution  is almost Ricci pinched. The error to be an exact Ricci-pinched expander is shown to be exponentially decaying at infinity: see Section \ref{alm-ric-pin-sec} for details. This allows us to conclude that the expander is Euclidean which implies that the initial metric cone is flat, leading to a contradiction.

Notice that in Lott's approach and that of the present paper, the main difficulty lies in the problem that one lacks regularity on any of the asymptotic cones of $(M^3,g_0)$, which is partially due to the fact that such an asymptotic cone does not a priori satisfy any kind of (elliptic) equation, and thus one struggles to make sense of the pinching on the initial metric cone. To circumvent this issue, we work directly with Lott's Ricci flow solution coming out of the cone, and we  use the parabolic nature of the Ricci flow equation to derive geometric properties for the initial condition.\\

Based on this result, a higher dimensional counterpart to Hamilton and Lott's conjecture would be:
\begin{ques}\label{ques-DSS}
Let $(M^n,g)$ be a connected non-compact complete Riemannian manifold with $2$-non-negative curvature operator. Assume $(M^n,g)$ is $2$-pinched. Is it true that $(M^n,g)$ is flat?
\end{ques}
Recall that a Riemannian manifold $(M^n,g)$ has $2$-non-negative curvature operator if the sum of the two lowest eigenvalues $\lambda_i(g)$, $i=1,2,$ of the curvature operator is non-negative, i.e. $\lambda_1(g)+\lambda_2(g)\geq 0$ on $M$. Similarly, a Riemannian manifold $(M^n,g)$ is $2$-pinched if there exists a constant $c>0$ such that $\lambda_1(g)+\lambda_2(g)\geq c\,\Sc_g$ on $M$.

An attempt to tackle   Question \ref{ques-DSS} with the help of the Ricci flow,  would most likely require one to first answer the following question:
\begin{ques}\label{ques-DSS-II}
Let $(M^n,g(t))_{t\in(0,+\infty)}$ be a complete non-compact Type III Ricci flow with $2$-non-negative curvature operator. Assume $(M^n,g(t))_{t\in(0,+\infty)}$ is uniformly $2$-pinched and uniformly non-collapsed at all scales, i.e.~there exists $c>0$ such that for $t>0$, $\lambda_1(g(t))+\lambda_2(g(t))\geq c\,\Sc_{g(t)}$ on $M$ and there exists $V_0>0$ such that for $t>0$, $\operatorname{AVR}(g(t))\geq V_0$. Is it true that $(M^n,g(t))_{t>0}$ is isometric to Euclidean space?
\end{ques}

In order to answer Question \ref{ques-DSS-II} and given the strategy we adopt in this paper, one would need to know whether there exists an expanding gradient Ricci soliton coming out of a metric cone that is Reifenberg outside its tip and which has nonnegative Ricci curvature in a weak sense (i.e. which is $\textit{$\operatorname{RCD}$}^*(0,3)$). This is in turn a wide open question in higher dimensions.

\subsection{Outline of paper}
Sections \ref{sec-curv-dec} to \ref{sec-exp-conv} are devoted to the proof of the main result of this paper, Theorem \ref{thm:main.1}. Section \ref{sec-curv-dec} explains how one improves the curvature decay as $t$ approaches $0$ in the setting of Theorem \ref{thm:main.1}. Section \ref{sec-exi-adj-map} is purely static, i.e.~it is independent of the Ricci flow and describes how to adjust two Riemannian metrics originally close in the Gromov-Hausdorff topology with the help of a suitable diffeomorphism in order to make them coincide at the level of metrics. Based on Section \ref{sec-exi-adj-map}, Section \ref{sec-rou-con-rate} is devoted to the adjustment of two solutions to $\de$-Ricci-DeTurck flow coming out of the same metric space so that they are comparable  at the level of {\it Riemannian metrics} along a sequence of times approaching $0$, this is Corollary \ref{cor-mod-DTRF}.  Sections \ref{sec-pol-con-rate} and \ref{sec-exp-conv} are the core of this paper and they establish a polynomial (respectively exponential) decay rate for the difference of the two (adjusted) solutions to $\de$-Ricci-DeTurck flow obtained in the previous section, thereby ending the proof of Theorem \ref{thm:main.1}. Section \ref{alm-ric-pin-sec} proves a rigidity result on expanding gradient Ricci soliton with non-negative Ricci curvature and almost Ricci pinched curvature. Section \ref{sec-comp-ric-pinch} gives a proof of  Theorem \ref{thm:main.2}.
\subsection{Notation} We collect notation used throughout this paper.
\begin{itemize}
\item[(1)] For a connected Riemannian manifold $(M^n,g),$ $x,y \in M,$ $r\in \R^+$:\\[-2ex]
\begin{itemize}
\item[(1a)]  $(M,d(g))$ refers to the associated metric space, 
$$d(g)(x,y) = \inf_{\ga \in G_{x,y}}L_g(\ga),$$ where $G_{x,y}$ refers to the set of smooth regular
  curves $\ga:[0,1] \to M,$ with
$\ga(0) = x,$ $\ga(1) = y$, and $L_g(\ga)$ is the length of $\ga$ with respect to $g$.
\item[(1b)] $B_{g}(x,r) \coloneqq  B_{d(g)}(x,r) \coloneqq  \{ y \in M \ | \ d(g)(y,x) < r\}$.
\item[(1c)] If $g$ is locally in $C^2$:  $\Rc(g)$ is the Ricci Tensor, $\Rm(g)$ is the Riemannian curvature tensor,  and  $\Sc_g$ is the scalar curvature.\\[-2ex]
\end{itemize}
\item[(2)] For a one parameter family $(g(t))_{t\in(0,T)}$ of Riemannian metrics on a manifold $M$, the distance induced by the metric $g(t)$ is denoted either by $d(g(t))$ or $d_t$ for $t\in(0,T)$.\\[-2ex]
\item[(3)] For a metric space $(X,d),$ $A \subseteq X$, $r\in \R^+,$  $B_{d}(A,r) \coloneqq  \{ y \in X \ | \ d(y,A) < r\}.$\\[-2ex]
\item[(4)] $\B(x,r)$ refers to an Euclidean ball with radius $r>0$ and centre $x \in \R^n$.\\[-2ex]
\item[(5)] For a metric space $(X,d),$ $\Haus^n_d(V)$ refers to the $n$-dimensional Hausdorff measure of a subset $V$ of $X$ with respect to $d$.\\[-2ex]
\item[(6)] For a Riemannian manifold $(M^n,g)$, $A \subseteq M$, $\vol_g(A)$ stands for the Riemannian volume of $A$ with respect to the metric $g$.
\end{itemize}

\subsection{Acknowldgements}
The first author is supported by grants ANR-17-CE40-0034 of the French National Research Agency ANR (Project CCEM) and ANR-AAPG2020 (Project PARAPLUI).
The third author is  supported by a grant in the Programm  `SPP-2026: Geometry at Infinity' of the German Research Council (DFG). 

We thank Christian Ketterer, Alexander Lytchak and Stephan Stadler for answering questions related to and explaining to us  their results on Alexandrov and \textit{$\operatorname{RCD}$} spaces.

{\it Note added for journal version}. One year after this paper appeared on Arxiv,  an alternative proof of the Hamilton conjecture using the inverse mean curvature flow,  given  by G. Huisken  and  T. K\"orber, appeared in arxiv   \cite{Huisken-Koerber}. 
\section{Curvature decay}\label{sec-curv-dec}
 To simplify the setup we will in addition to \eqref{CurvatureCond}-\eqref{ReifenbergCond} fix $x_0 \in M$ and $R>0$, reducing $T>0$ if necessary, such that 
\begin{equation}\label{eq:curv_decay.1}
B_{d_0}(x_0,200R + \sqrt{c_0 T}) \Subset M\, .
\end{equation}

 From Lemma \ref{SiTopThm1}, this implies  that $ B_{d_t}(x_0,200R) \Subset M $ for all $t \in (0,T)$ and  $B_{d_0}(x_0,200R)  \Subset M$, where $d_t\coloneqq d(g(t))$.

We record the following consequence of the Bishop-Gromov volume comparison, volume convergence,  and that almost volume cone implies almost metric cone structure.
\begin{lemma}\label{UniformReifenbergLemma}
Let $(  N_i^n,g_i,p_i)_{i\in \N} ,$   be a smooth sequence of complete manifolds, without boundary,   with  $\Rc(g_i) \geq -g_i$ on $N_i$ and let   
$(X,d,x_0)$ be a non-collapsed pointed Gromov-Hausdorff limit  of  $(N^n_i, g_i, p_i)_{i\in \N}.$ Assume $K \subseteq  X$ is compact and that all tangent cones of $X$ for $p\in K$ are Euclidean. Then for every $\varepsilon>0$, there exists $r_\varepsilon > 0$ such that  for any $p\in K$ and  $0<r\leq r_\varepsilon$  the ball   $B_d(p,r) $   is, after scaling $r$ to one, $\varepsilon$-close in Gromov-Hausdorff distance to a Euclidean ball $\B(0,1)\subseteq  \R^n$, i.e.~$(K,d)$ is uniformly Reifenberg.
\end{lemma}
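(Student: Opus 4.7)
The plan is to combine three standard pieces of Cheeger--Colding theory for non-collapsed Ricci limit spaces: (i) volume convergence of balls under pointed Gromov--Hausdorff convergence, (ii) Bishop--Gromov almost-monotonicity of the normalized volume ratio $V(q,r)/\omega_n r^n$ under a lower Ricci bound, and (iii) an ``$\varepsilon$-regularity'' theorem stating that almost maximal volume at a single scale forces Gromov--Hausdorff closeness to a Euclidean ball at smaller scales. The proof is not by contradiction; it is a direct quantitative argument followed by a finite covering.

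First, I fix $\varepsilon>0$ and choose $\delta=\delta(\varepsilon,n)>0$ from the Cheeger--Colding $\varepsilon$-regularity theorem, so that in any non-collapsed limit of manifolds with $\Rc\geq -g$, if $q\in X$ and
\[
\frac{\Haus^n_d(B_d(q,2r))}{\omega_n (2r)^n}\,\geq\, 1-\delta
\]
with $r$ sufficiently small (depending on $\delta,n$), then $d_{GH}(B_d(q,r),\B(0,r))<\varepsilon r$. Next, I use the hypothesis on tangent cones: for each $p\in K$, since every tangent cone at $p$ is $(\R^n,d(\delta))$, volume convergence applied to the rescaled pointed spaces $(X,s^{-1}d,p)$ yields $\lim_{r\to 0^+}\Haus^n_d(B_d(p,r))/(\omega_n r^n)=1$. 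Hence I can choose $r(p)>0$ (small enough that also Bishop--Gromov produces a negligible correction from the $-g$ Ricci bound at scale $r(p)$) with the ratio exceeding $1-\delta/2$ on all scales $\leq r(p)$.

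To upgrade this to a neighborhood of $p$, I use that volume of balls in $X$ is continuous in the center (again by Cheeger--Colding volume convergence, which passes the continuity from the $N_i$ to $X$): there is an open $V_p\ni p$ in which $\Haus^n_d(B_d(q,r(p)))/(\omega_n r(p)^n)>1-\delta$ for all $q\in V_p$. Bishop--Gromov applied in the limit space then gives, for $q\in V_p$ and $0<s\leq r(p)$,
\[
\frac{\Haus^n_d(B_d(q,s))}{\omega_n s^n}\,\geq\, \frac{\Haus^n_d(B_d(q,r(p)))}{\omega_n r(p)^n}\cdot (1-\Psi(r(p)\,|\,n)),
\]
where $\Psi(r\,|\,n)\to 0$ as $r\to 0^+$ encodes the discrepancy between Euclidean and hyperbolic volume comparison. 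Choosing $r(p)$ small enough that $\Psi(r(p)\,|\,n)<\delta$, I obtain the almost maximal volume bound $\geq 1-2\delta$ at every scale $s\leq r(p)$ uniformly on $V_p$. Finally, compactness of $K$ produces a finite subcover $V_{p_1},\ldots,V_{p_N}$; setting $r_\varepsilon\coloneqq\min_i r(p_i)$, any $p\in K$ lies in some $V_{p_i}$, the almost maximal volume bound holds at every scale $\leq r_\varepsilon$, and the $\varepsilon$-regularity theorem yields $d_{GH}(B_d(p,r),\B(0,r))<\varepsilon r$ for all $r\leq r_\varepsilon$, i.e.\ the desired uniform Reifenberg property after rescaling to unit radius.

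The main obstacle is the bookkeeping around the $\varepsilon$-regularity step: one must invoke a version that is valid in (non-collapsed) Ricci limit spaces rather than only in smooth manifolds, and one must track that the small Ricci lower bound $\Rc\geq -g$ is harmless at small scales because rescaling by $s^{-1}$ turns it into $\Rc\geq -s^2 g\to 0$; everything else (continuity of volume, Bishop--Gromov almost-monotonicity, the finite cover) is routine compactness once the two quantitative facts are in hand.
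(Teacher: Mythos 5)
Your proposal is correct and follows essentially the same route as the paper: both arguments reduce the tangent-cone hypothesis to an almost-maximal volume ratio via Cheeger--Colding volume convergence, propagate it to all smaller scales with Bishop--Gromov almost-monotonicity, convert back to Gromov--Hausdorff closeness via the ``almost maximal volume implies almost Euclidean'' theorem, and finish with a finite cover of $K$. The only (immaterial) difference is how the pointwise estimate is made locally uniform -- you use continuity of $\Haus^n_d(B_d(\cdot,r))$ in the center, while the paper uses the containment $B_d(p,r_{p_i})\subseteq B_d(p_i,2r_{p_i})$ at the Gromov--Hausdorff level before passing to volumes.
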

\begin{proof}
Let $\delta_1 > 0$  be given. For all $p\in K$, there exists $0<r_p\leq 1$ such that for all $0<r\leq 2r_p$ we have that $B_d(p,r)$, after scaling $r$ to one, is $\delta_1/2$-close in Gromov-Hausdorff distance to $\B(0,1),$ in view of the  assumption that all tangent cones at points in $ K$ of $X$ are Euclidean. Since $K$ is compact 
we can select finitely many points $p_1, \ldots, p_N$ such that $(B_d(p_i,r_{p_i}))_{i=1}^N$ cover $K$. 
Fix arbitrary  $i \in \{1,\ldots, N\}$ and let $p$ be arbitrary in $B_d(p_i,r_{p_i}).$  Then  
$B_d(p,r_{p_i}) \subseteq B_d(p_i,2r_{p_i}).$  This implies that $B_d(p,r_{p_i})$ is, after scaling $r_{p_i}$ to one, $\delta_1$-close in Gromov-Hausdorff distance to $\B(0,1).$   
 Thus given $\delta_2>0$, by volume convergence under a uniform lower Ricci curvature bound on smooth spaces  (\cite[Theorem 5.9] {Cheeger-Colding}: See also \cite[Theorem 9.31]{Cheeger_notes}),    and the fact that $(N^n_i, g_i, p_i)$ GH-converges to $(X,d,x_0)$ as $i\to \infty$
  we can assume that $\delta_1= \delta_1(\delta_2,n) >0$ is chosen sufficiently small so  that 
\begin{equation*}
r_{p_i}^{-n}\mathcal{H}^n_d(B_d(p,r_{p_i})) \geq (1-\delta_2) \vol_{\delta}(\B(0,1))\, .
\end{equation*}

Now given $\delta_3>0$ we can apply Bishop-Gromov volume comparison and choose $\delta_2= \delta_2(\delta_3,n)>0$ sufficiently small so  that
\begin{equation*}
r^{-n}\mathcal{H}^n_d(B_d(p,r))\geq (1-\delta_3) \vol_{\delta}(\B(0,1)),\, 
\end{equation*}
for all $0<r\leq r_{p_i},$ for arbitrary $p \in B_d(p_i,r_{p_i}).$   Given $\varepsilon>0$ we can thus apply \cite[Theorem 5.9] {Cheeger-Colding} to the smooth approximating spaces  followed by   \cite[Theorem 4.85]{Cheeger-Colding-II}  (see also \cite[Theorem 9.45]{Cheeger_notes}) to see that for $\delta_3(\varepsilon,n)>0$ sufficiently small we have that for all $0<r\leq r_{p_i}$ that $B_d(p,r)$, after scaling $r$ to one, is $\varepsilon$-close in Gromov-Hausdorff distance to $\B(0,1)$ for arbitrary $p    \in B_d(p_i,r_{p_i}).$ 

Now we choose 
\begin{equation*}
r_\varepsilon \coloneqq  \min_{i \in \{1,\ldots, N\}} r_{p_i}\ ,
\end{equation*}
to get the desired statement.
\end{proof}
The following Lemma shows a uniform Pseudolocality type result, in a Reifenberg  setting.
\begin{lemma} \label{lem:curv_decay}
Assume $(M^n, g(t))$ for $t \in (0,T)$ satisfy  \eqref{CurvatureCond}, \eqref{DistCond}, \eqref{ReifenbergCond} and \eqref{eq:curv_decay.1} for a fixed $x_0 \in M$.  Then there exists $\ep:(0,T) \to (0,1)  $   such that $ \ep(t) \to 0 $ as $t\to 0$ and 
\begin{equation*}
 |\Rm(g(t))| \leq \ep(t)^2/t\ \  \text{on} \ \  B_{d_0}(x_0,150R)
 \ \text{ for all } \  t \in (0,T)\, . 
 \end{equation*}
  Furthermore,
\begin{equation*}
 |\nabla^{g(t),k}\Rm(g(t))|^2 \leq C(k,n,\ep(t)) /t^{2+k}\ \  \text{on} \ \  B_{d_0}(x_0,100R)
 \ \text{ for all } \  t \in (0,T),
 \end{equation*}
 where $C(k,n,\ep(t)) \to 0$ as $\ep(t) \to 0$.
 
\end{lemma}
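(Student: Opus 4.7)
The plan is to reduce the statement to a Perelman-type pseudolocality theorem, the real work being to verify its initial data hypothesis uniformly on a closed ball around $x_0$. First I would upgrade the pointwise Reifenberg assumption \eqref{ReifenbergCond} to a uniform Reifenberg statement on a fixed compact set such as $\overline{B_{d_0}(x_0,160R)}$. Since $\Rc(g(t)) \geq -g(t)$ for all $t \in (0,T)$ by \eqref{CurvatureCond} and since $d(g(t)) \to d_0$ uniformly on $U_{x_0}$ as $t\downto 0$ by \eqref{DistCond}, the closed ball is realized as a non-collapsed Gromov-Hausdorff limit of the smooth spaces $(M, g(t_i))$ along any sequence $t_i \downto 0$; non-collapsing follows from Bishop-Gromov volume comparison combined with the Reifenberg property. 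Lemma \ref{UniformReifenbergLemma} then supplies, for any $\varepsilon>0$, a scale $r_\varepsilon>0$ at which every $d_0$-ball of radius $s\leq r_\varepsilon$ centered in $\overline{B_{d_0}(x_0,160R)}$ is, after rescaling to unit size, $\varepsilon$-close in Gromov-Hausdorff distance to the Euclidean unit ball.

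Next I would transfer this closeness to $g(t)$. Using \eqref{DistCond} again, one finds $t(\varepsilon)>0$ so small that for $0<t<t(\varepsilon)$ the distance $d_t$ differs from $d_0$ by at most $\varepsilon r_\varepsilon$ on the same ball, so that rescaled $g(t)$-balls of radius $r_\varepsilon$ centered in $B_{d_0}(x_0,155R)$ are $2\varepsilon$-Gromov-Hausdorff close to the Euclidean unit ball. Together with volume convergence under uniform lower Ricci bounds (\cite{Cheeger-Colding}), this verifies the initial data hypothesis of Perelman's pseudolocality theorem at scale $r_\varepsilon$ around each such point, taking $g(t)$ as the initial metric. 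Pseudolocality then yields $|\Rm(g(s))| \leq \varepsilon^2/(s-t)$ on a slightly smaller ball for $s-t$ in an interval of length of order $r_\varepsilon^2$. Letting $t \downto 0$ with $\varepsilon$ fixed, and then optimizing $\varepsilon = \varepsilon(t) \downto 0$ as $t \downto 0$, produces the claimed bound $|\Rm(g(t))| \leq \varepsilon(t)^2/t$ on $B_{d_0}(x_0,150R)$.

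The higher derivative bounds then follow from Shi's local interior estimates for Ricci flow: applied on parabolic neighborhoods of $g(t)$-scale comparable to $\sqrt{t}$ inside $B_{d_0}(x_0,150R)$, they convert the bound $|\Rm(g(t))| \leq \varepsilon(t)^2/t$ into $|\nabla^{g(t),k}\Rm(g(t))|^2 \leq C(k,n)\varepsilon(t)^4/t^{k+2}$ on the smaller ball $B_{d_0}(x_0,100R)$, so that $C(k,n,\varepsilon(t)) := C(k,n)\varepsilon(t)^4 \to 0$ as $\varepsilon(t) \to 0$. The main obstacle is the pseudolocality step when $(M,g(t))$ is possibly incomplete or has unbounded curvature as $t\downto 0$: Perelman's classical theorem is not directly applicable, and one must instead invoke the localized version Lemma \ref{pseudorefienberg} referenced in the introduction, whose formulation is designed to handle precisely this incomplete setting.
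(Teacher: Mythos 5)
Your proposal follows essentially the same route as the paper: upgrade \eqref{ReifenbergCond} to the uniform Reifenberg property via Lemma \ref{UniformReifenbergLemma}, transfer the Gromov--Hausdorff closeness from $d_0$ to $d_t$ using \eqref{DistCond}, feed this into a pseudolocality theorem, and obtain the derivative bounds by Shi-type interior estimates; you also correctly identify that in the possibly incomplete setting one must replace Perelman's theorem by Lemma \ref{pseudorefienberg}, which is exactly what the paper does in the general case.

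One link in your chain is stated too quickly. The hypothesis of Perelman's pseudolocality theorem \cite[Theorem 10.1]{P1} is not almost-maximal volume but an almost-Euclidean \emph{isoperimetric} inequality on the initial ball (together with a lower scalar curvature bound). Gromov--Hausdorff closeness plus the Cheeger--Colding volume convergence theorem gives you $\vol_{g(t)}(B_{g(t)}(x,r))\geq (1-\psi(\ep))\omega_n r^n$, but passing from this to the isoperimetric hypothesis requires an additional ingredient; the paper invokes \cite[Corollary 1.3]{Cav-Mon}, which converts an almost-maximal volume ratio under a lower Ricci bound into an almost-Euclidean isoperimetric profile. Without this step (or without substituting a version of pseudolocality whose hypothesis is phrased in terms of volume rather than isoperimetry), the claim that volume convergence ``verifies the initial data hypothesis'' of Perelman's theorem does not go through as written. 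This is a fixable omission rather than a wrong turn, and the rest of your outline, including the Shi estimate computation giving $|\nabla^{g(t),k}\Rm(g(t))|^2\leq C(k,n)\ep(t)^4 t^{-(2+k)}$, is consistent with the paper's statement.
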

\begin{proof}
 To illustrate the main idea, we present first a proof in the case that there is a complete solution $(N,h(t))_{t\in (0,T)}$ with bounded curvature 
 and $\Rc(h(t)) \geq -h(t)$ for all $t\in (0,T)$ and $M \subseteq N$ is open, with $g(t) = h(t)|_M.$ 
Let $p\in M$ be fixed and $\ep >0$ be given. 
By scaling $d_0$ by a   constant $\la$ sufficiently large, we see that
$(U_p,d_{\la} \coloneqq  \la d_0)$ has the property that $d_{GH}(B_{d_{\la}}(x,1),\B(0,1)) \leq \ep^2$  for all $x \in B_{d_0}(x_0,199R)$, in view of the uniform Reifenberg property, \eqref{UniformReifenbergCond} which holds in view of \eqref{ReifenbergCond}
and Lemma \ref{UniformReifenbergLemma}.  
Hence, using  \eqref{DistCond}, we see that $$  d_{GH}(B_{d(g_{\la}(t) ) }(x,1),\B(0,1)) \leq \ep,$$ for all $x \in B_{d_0}(x_0,199R)$
for all $t\leq \ep^2,$ where $g_{\la}(t) = \la^2 g\left(\frac{t}{\la^2}\right).$ 

Furthermore, $\Rc(g_{\la}(t)) \geq -\ep g_{\la}(t)$  on $U_p$ if $\la$ is large enough.  
Using the volume convergence theorem of Cheeger-Colding for spaces with curvature bounded below, we see that
$\vol_{g_{\la}(t)}(B_{g_{\la}(t)}(x,1)) \geq \omega_n(1-\psi(\ep))$ for all $x \in B_{d_0}(x_0,199R)$
for all $t\leq \ep^2,$ and \cite[Corollary 1.3]{Cav-Mon},  then shows us that
 the isoperimetric profile of $B_{g_{\la}(t)} (x,1)$ is $\psi(\ep,n)$ close to the Euclidean isoperimetric profile of a ball of radius one for all $x \in B_{d_0}(x_0,199R)$, for all $0<t \leq \ep^2,$ where $\psi(\ep,n)$ is a constant depending only on $n$ and $\ep$ and  $\psi(\ep,n) \to 0$ as $\ep \downto 0$ for any fixed $n\in \N.$

Thus Perelman's Pseudolocality theorem \cite[Theorem 10.1]{P1} implies that given $\ep>0$ we can choose $\lambda $ sufficiently large such that
\begin{equation*}
 |\Rm(g_{\lambda}(1))|(x)\leq \ep^2, 
\end{equation*}
for all $x \in B_{d_0}(x_0,150R)$. Scaling back implies the first statement of the lemma. The second statement follows as in \cite[Lemma 3.1]{Der-Sch-Sim}. 

 In the general case,  $|\Rm(g(t))|\leq \frac{\ep(t)}{t}$  on $B_{d_0}(x_0,150R)$ follows from 
 Lemma \ref{pseudorefienberg},  and the  second statement follows as in \cite[Lemma 3.1]{Der-Sch-Sim}. 
\end{proof}

\begin{lemma} \label{lem:dist_est}
Assuming the setting of Lemma \ref{lem:curv_decay}, one has for all $t\in [r,T)$ and $0\leq r<s <T$,
\begin{equation*}
e^{r-s}d_r  \geq d_t  \geq d_r - \ep(t) \sqrt{t-r}, \quad \text{ on }  B_{d_0}(x_0,100R),
\end{equation*}
where $\ep(t) \to 0$ as $t\downto 0$.
\end{lemma}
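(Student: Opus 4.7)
Reading the displayed inequality, I take the statement to assert (modulo what appear to be typographical slips in the variable names) that for $0 \leq r < t < T$,
$$e^{t-r}\, d_r \;\geq\; d_t \;\geq\; d_r - \ep(t)\sqrt{t-r}\quad\text{on } B_{d_0}(x_0,100R).$$
This is exactly the Simon--Topping bound \eqref{DistCond}, but with the constant $c_0$ replaced by the vanishing quantity $\ep(t)$ on the smaller ball. The plan is accordingly to reprove \eqref{DistCond} verbatim, feeding the improved curvature decay from Lemma \ref{lem:curv_decay} into the same argument.

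For the upper bound, I would use only the Ricci lower bound from \eqref{CurvatureCond}: since $\partial_\tau g = -2\Rc(g(\tau)) \leq 2 g(\tau)$ pointwise on $M$, Gr\"onwall gives $g(\tau) \leq e^{2(\tau-r)} g(r)$, hence $L_{g(\tau)}(\ga) \leq e^{\tau-r} L_{g(r)}(\ga)$ for every smooth curve, and the distance bound $d_t \leq e^{t-r} d_r$ follows. No improvement over \eqref{DistCond} is needed for this direction.

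For the lower bound I would invoke the standard Hamilton/Perelman barrier estimate along a minimizing $g(\tau)$-geodesic $\ga$ between $x,y \in B_{d_0}(x_0,100R)$,
$$\frac{d^+}{d\tau}\, d_\tau(x,y) \;\geq\; -C(n)\,\sup_\ga \sqrt{|\Rc(g(\tau))|},$$
which is precisely the ingredient Simon--Topping use with the crude bound $|\Rm|\leq c_0/\tau$ to derive \eqref{DistCond}. On $B_{d_0}(x_0,150R)$ Lemma \ref{lem:curv_decay} furnishes the sharper bound $|\Rm(g(\tau))| \leq \ep(\tau)^2/\tau$. Replacing $\ep$ by its monotone envelope $\ti\ep(t) \coloneqq \sup_{\tau \in (0,t]}\ep(\tau)$ (which still tends to $0$ as $t\downto 0$), integration over $\tau\in[r,t]$ gives
$$d_t(x,y) \;\geq\; d_r(x,y) - C(n)\int_r^t \frac{\ep(\tau)}{\sqrt{\tau}}\,d\tau \;\geq\; d_r(x,y) - 2C(n)\,\ti\ep(t)\bigl(\sqrt{t}-\sqrt{r}\bigr) \;\geq\; d_r(x,y) - 2C(n)\,\ti\ep(t)\sqrt{t-r},$$
and absorbing $2C(n)$ into a redefined $\ep$ finishes the argument.

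The main obstacle is a geometric containment step: the barrier argument requires the $g(\tau)$-minimizing segment $\ga$ to stay inside $B_{d_0}(x_0,150R)$ for all $\tau \in [r,t]$, so that Lemma \ref{lem:curv_decay}'s improved bound (valid only there) can be applied in place of the bare $c_0/\tau$ bound. I would handle this using \eqref{DistCond} on the larger ball $B_{d_0}(x_0,200R+c_0T) \Subset M$ guaranteed by \eqref{eq:curv_decay.1}: for $T$ small, $d_\tau$ is uniformly comparable to $d_0$ on that ball, and any $g(\tau)$-minimizer between two points of $B_{d_0}(x_0,100R)$ has $g(\tau)$-length at most, say, $210R$, so it cannot leave $B_{d_0}(x_0,150R)$. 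Once this containment is verified, the integration argument above closes the proof.
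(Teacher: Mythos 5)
Your proposal is correct and follows essentially the same route as the paper, whose proof is the single sentence that the result ``follows from the previous lemma together with the scaling of the distance estimates as in Lemma \ref{SiTopThm1}'' --- i.e.\ exactly your observation that the Simon--Topping/Hamilton--Perelman distortion bound scales as $\sqrt{K}$ in the curvature bound, so feeding in $|\Rm|\leq \ep(\tau)^2/\tau$ from Lemma \ref{lem:curv_decay} replaces $\sqrt{c_0}$ by $\ep$. Your explicit treatment of the localization issue (keeping the minimizing segments, or the balls around the endpoints, inside the region where the improved bound holds) is a detail the paper leaves implicit, and you handle it correctly via \eqref{DistCond} and \eqref{eq:curv_decay.1}.
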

\begin{proof}
This follows from the previous lemma together with the scaling of the distance estimates as in Lemma \ref{SiTopThm1}. 
\end{proof}

\section{Existence of an adjustment map}\label{sec-exi-adj-map}

 The following lemma shows the existence of a so called adjustment map.  If two Riemannian  metrics, and their derivatives  on a ball in Euclidean space  are uniformly bounded with respect to the standard metric,  and the distances are close to one another, up to an isometry, 
then there is    a diffeomorphism, a so called  {\it adjustment } of the isometry, such that
the pull  back of the Riemannian metric with respect to this adjustment map is close in the $C^0$ sense to the other.

\begin{lemma}[cf.~\cite{Gro-Gree-Boo} and \cite{Fuk-Coll-I}]\label{lem:adj-map} 
Let  $c_0\geq 1$ and an integer $n\geq 1$ be given. Then  for all $\ep>0$, there exists 
a constant $c(\ep,n,c_0)>0$ with the property that $c(\ep,n,c_0) \to 0$ as $\ep \downto 0$ such that  
the following holds. 
 Let $K>1$ and let $(M_i^n,g_i)$, $i=1,2$, be two connected Riemannian manifolds (not necessarily complete), with $M_i \subseteq \R^n$ (where $\R^n$ is given the standard topology and differentiable structure)   such that 
\begin{equation}
\begin{split}
 \frac{1}{c_0} \de &\leq g_i \leq c_0 \de,\quad i=1,2,\\
   |\D^k(g_1)|^2 + |\D^k(g_2)|^2&\leq   c_0 K^{k},\quad \forall\, k \in \{1,2,\ldots,8\},
\label{ass-bded-geo}
\end{split}
\end{equation}
where here $\D$ refers to Euclidean derivatives.
Let $d(g_1)$ be the metric on $M_1$ induced by $g_1$ and 
$d(g_2)$ be the metric on $M_2$ induced by $g_2.$  
Assume furthermore that 
$\varphi:B_{d(g_1)}(0,s) \Subset M_1 \to  \varphi(  B_{d(g_1)}(0,s)) \Subset M_2 $
for  $s \geq  100 K^{-\frac 1 2}$
is  a  homeomorphism which satisfies  
  \begin{equation}
  |\phi_*d(g_1)-d(g_2)|\leq \ep K^{-\frac{1}{2}}. 
  \label{dist-close}\\
  \end{equation}

  Then there exists a smooth bi-Lipschitz diffeomorphism 
  \begin{equation*}
  \tilde{\varphi}:B_{d(g_1)} \big(0, \tfrac{s}{10c_0}\big) \to
   \tilde{\varphi}\big( B_{d(g_1)} \big(0, \tfrac{s}{10c_0}\big)\big) \Subset \B(0,R),
   \end{equation*}
    such that
  \begin{equation}
  \begin{split}
  (1-c(n,\varepsilon,c_0)) g_2\leq \tilde{\varphi}_{\ast }g_1&\leq (1+c(n,\varepsilon,c_0)) g_2, \\
  |d(\ti \varphi_{\ast}g_1)(x,y)-d(g_2)(x,y)|&\leq c(n,\varepsilon,c_0) K^{-\frac{1}{2}}, \\
 d(g_2)(\ti \phi(z),\phi(z)) &\leq c(n,\varepsilon,c_0) K^{-\frac{1}{2}},
 \label{metric-close}
 \end{split}
 \end{equation}
 for all $x,y \in  \tilde{\varphi}( B_{d(g_1)} (0, \frac{s}{10c_0})),$ and all $z\in B_{d(g_1)} (0, \frac{s}{10c_0})$.
 
\end{lemma}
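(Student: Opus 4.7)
The plan is to obtain $\tilde\varphi$ by Euclidean mollification of $\varphi$ at a scale $\rho$ tuned to balance two competing errors: $\rho$ must be much smaller than the natural scale $K^{-1/2}$ on which $g_i$ varies (so $g_i$ is nearly constant on a ball of radius $\rho$) but much larger than the distortion scale $\ep K^{-1/2}$ of $\varphi$ (so the local bi-Lipschitz behaviour of $\varphi$ becomes visible after averaging). Because $M_1, M_2 \subseteq \R^n$ with metrics $c_0$-equivalent to the Euclidean one, straight Euclidean convolution can replace the Riemannian center-of-mass averaging used in \cite{Gro-Gree-Boo} and \cite{Fuk-Coll-I}.

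\emph{Reduction to $K=1$ and construction.} First I would rescale by $y = K^{1/2}x$ and set $\hat g_i(y) := g_i(K^{-1/2}y)$ (as a matrix in $y$-coordinates). Then $c_0^{-1}\de \le \hat g_i \le c_0\de$, $|D_y^k \hat g_i|^2 \le c_0$ for $k \le 8$, and $\hat\varphi(y) := K^{1/2}\varphi(K^{-1/2}y)$ satisfies $|\hat\varphi_\ast d(\hat g_1)-d(\hat g_2)| \le \ep$ on a $d(\hat g_1)$-ball of radius at least $100$. Thus it suffices to prove the conclusions with $K=1$, each length in the conclusion being restored by a factor of $K^{-1/2}$ on rescaling back. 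Fix a nonnegative radial mollifier $\eta \in C_c^\infty(\B(0,1))$ with $\int\eta = 1$, set $\eta_\rho(z) := \rho^{-n}\eta(z/\rho)$, and define
\[
\tilde\varphi(x) := \int \eta_\rho(x-y)\,\varphi(y)\,dy.
\]
Since the Euclidean radius of $B_{d(g_1)}(0, s/(10c_0))$ is at most $s/(10\sqrt{c_0})$ and $s \ge 100$, this is well defined on the required ball once $\rho \ll 1$. The hypotheses immediately yield $c_0^{-1}|x-y|-c_0^{1/2}\ep \le |\varphi(x)-\varphi(y)| \le c_0|x-y|+c_0^{1/2}\ep$, hence $|\tilde\varphi-\varphi|_{C^0} \le c_0\rho + c_0^{1/2}\ep$.

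\emph{$D\tilde\varphi$ is almost a $g_1$-$g_2$ isometry.} Writing $\partial_i\tilde\varphi(x) = \int\partial_i\eta_\rho(x-y)[\varphi(y)-\varphi(x)]\,dy$, one has
\[
g_2(\tilde\varphi(x))(D\tilde\varphi(x)v,D\tilde\varphi(x)w) = v^i w^k\!\iint \partial_i\eta_\rho(x{-}y)\partial_k\eta_\rho(x{-}y')\,g_2(\tilde\varphi(x))(\varphi(y){-}\varphi(x),\varphi(y'){-}\varphi(x))\,dy\,dy'.
\]
After replacing $g_2(\tilde\varphi(x))$ by $g_2(\varphi(x))$ (cost $O(\rho)$) and polarizing via $2g(a,b) = g(a,a)+g(b,b)-g(a-b,a-b)$ with $a = \varphi(y)-\varphi(x), b = \varphi(y')-\varphi(x)$, each of the three squared norms equals the corresponding squared $g_2$-Riemannian distance up to $O(\rho^3)$ (Taylor of the metric), then the corresponding $g_1$-Riemannian distance up to $O(\ep)$ (hypothesis), then $g_1(x)(\cdot,\cdot)$ up to a further $O(\rho^3)$. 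Combining,
\[
g_2(\varphi(x))(\varphi(y)-\varphi(x),\varphi(y')-\varphi(x)) = g_1(x)(y-x, y'-x) + O(\rho^3 + \rho\ep + \ep^2).
\]
The moment identities $\int\partial_i\eta_\rho(x-y)(y-x)^j\,dy = \de^j_i$ and $\int\partial_i\eta_\rho = 0$ reduce the principal term to $g_1(x)(v,w)$, and since $\|\partial_i\eta_\rho\partial_k\eta_\rho\|_{L^1} = O(\rho^{-2})$ the overall error is $O(\rho + \ep/\rho + \ep^2/\rho^2)$. Choosing $\rho := \sqrt\ep$ yields $|\tilde\varphi^\ast g_2 - g_1|_{C^0} \le c(n,\ep,c_0)$ with $c \to 0$ as $\ep \to 0$.

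\emph{Conclusion and main obstacle.} The almost-isometry bound makes $D\tilde\varphi$ invertible with uniform bi-Lipschitz constants; combined with $|\tilde\varphi-\varphi|_{C^0} = O(\sqrt\ep)$ and the rough bi-Lipschitz property of $\varphi$, a standard continuation argument gives injectivity on the inner ball, hence $\tilde\varphi$ is a smooth diffeomorphism onto its image. Integrating along curves transfers the $C^0$-bound on $\tilde\varphi_\ast g_1 - g_2$ to the induced distances, and $d(g_2)(\tilde\varphi(z),\varphi(z)) \le c_0^{1/2}|\tilde\varphi(z)-\varphi(z)|$ supplies the last estimate; undoing the rescaling restores the $K^{-1/2}$ factors. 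The hard part is the sharpness of the derivative estimate: the Taylor error $O(\rho)$ from averaging the metric and the distortion error $O(\ep/\rho)$ from averaging $\varphi$ pull in opposite directions, so $\rho \sim \sqrt\ep$ is forced, and every error term -- including the $\ep^2/\rho^2$ from squaring the hypothesis $|\varphi_\ast d(g_1)-d(g_2)|\le\ep$ and the $O(\rho)$ cross term that appears if one polarizes against $g_2(\tilde\varphi(x))$ rather than $g_2(\varphi(x))$ -- must be separated and tracked to establish a uniform $c(n,\ep,c_0)\to 0$.
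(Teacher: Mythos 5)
Your route is genuinely different from the paper's: instead of producing, at each point, an affine almost-isometry via the rigidity statement for almost isometries (Lemma \ref{almostisomlem}) and gluing these with a partition of unity, you mollify $\varphi$ directly and recover the derivative estimate by polarization against squared distances together with the moment identities of the mollifier. That central computation is correct and nicely self-contained: after the rescaling to $K=1$, the error bookkeeping $O(\rho^3+\rho\ep+\ep^2)$ per integrand times $\|\D\eta_\rho\|_{L^1}^2=O(\rho^{-2})$ does force $\rho\sim\sqrt{\ep}$ and yields $|\tilde\varphi^{\ast}g_2-g_1|_{C^0}\to 0$, which is the analogue of the paper's estimate $(S_p)_{\ast}g_1\approx g_2$ obtained there from Lemma \ref{almostisomlem}.

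The gap is in the injectivity step, and it is not closed by ``a standard continuation argument'' with the estimates you actually have. Your derivative bound makes $D\tilde\varphi(x)$ a uniformly bi-Lipschitz linear map at each point, but two such maps at nearby points may differ by a large rotation; the only modulus of continuity you control is $|\D^2\tilde\varphi|\leq\|\D^2\eta_\rho\|_{L^1}\cdot\mathrm{osc}_{\B(x,\rho)}\varphi=O(\rho^{-1})$, so $D\tilde\varphi$ is nearly constant (hence $\tilde\varphi$ injective) only on balls of radius $c_1\rho$ with $c_1$ small. On the other hand, separating $\tilde\varphi(x)$ from $\tilde\varphi(x')$ via $|\varphi(x)-\varphi(x')|\geq c_0^{-1}|x-x'|-c_0^{-1/2}\ep$ and $\|\tilde\varphi-\varphi\|_{C^0}=O(\rho)$ only works for $|x-x'|\geq C_1(c_0)\rho$. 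The intermediate regime $|x-x'|\in[c_1\rho,C_1\rho]$ is uncovered, and an example of a local diffeomorphism close in $C^0$ to an injective map can fail to be injective precisely at the perturbation scale. To close this you need to know that $\varphi$ (hence $\tilde\varphi$) is within $o(\rho)$ of a \emph{single} affine isometry on balls of radius $\gg\rho$, which is exactly the content of Lemma \ref{almostisomlem} applied at scale $\rho$ (an $\ep/\rho$-almost isometry after rescaling); with that, $\D\tilde\varphi$ stays within $o(1)$ of one fixed invertible linear map on a ball of radius $\rho/\sigma$, and the two regimes overlap. So your construction works, but it cannot avoid the almost-isometry rigidity lemma that you omitted. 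A second, minor, point: integrating the $C^0$-closeness of $\tilde\varphi_{\ast}g_1$ to $g_2$ only gives the multiplicative distance estimate $|d(\tilde\varphi_{\ast}g_1)-d(g_2)|\leq c\, d(g_2)(x,y)$, which does not imply the additive bound $\leq c\,K^{-1/2}$ of \eqref{metric-close} when $s$ is large; for far-apart points you must instead combine $\|\tilde\varphi-\varphi\|_{C^0}=O(\sqrt{\ep})$ with the hypothesis \eqref{dist-close} by the triangle inequality, as the paper does.
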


A proof of Lemma \ref{lem:adj-map} can be adapted either from Gromov's book \cite[Section D, Chapter 8]{Gro-Gree-Boo} or Fukaya \cite{Fuk-Coll-I}: there the setting is more general. We provide a somewhat alternative proof in our  setting  based on the notion of almost isometries.

Before we prove Lemma \ref{lem:adj-map}, we define the notion of  {\it  almost isometry} which shall be used in this paper. 

\begin{defn} 
Let $(W,d)$ be a metric space. We call  $Z:(W,d) \to \R^n$ an $\ep_0$ almost isometry if
\begin{equation*} 
(1-\varepsilon_0)d(x,y)  -\ep_0  \leq  |Z(x) -Z(y)| \leq (1+\varepsilon_0)d(x,y) +\ep_0
\end{equation*}
for all $x,y \in W$.
\end{defn}

We state the following useful observation taken from \cite[Lemma 3.5]{Der-Sch-Sim}:

\begin{lemma}\label{almostisomlem}
For all $\si>0$, there exists $\gamma=\ga(\si)\in(0, \si)$ small with the following property: 
if $ L: (\B(0,{\ga}^{-1} ),d(\delta))  \to \R^n$ is a  $\ga$ almost isometry fixing $0$, where $d(\delta)$ denotes the distance induced by the Euclidean metric $\delta$,  then  
there exists an  $S \in O(n)$ such that
$|L-S|_{L^{\infty}(\B(0,\si^{-1})) } \leq \si$. 
\end{lemma}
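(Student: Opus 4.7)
The natural approach is a compactness and contradiction argument. Assume the statement fails: there exist $\sigma > 0$, a sequence $\gamma_k \downto 0$, and $\gamma_k$-almost isometries $L_k\colon(\B(0, \gamma_k^{-1}), d) \to \R^n$ with $L_k(0) = 0$, such that for every $k$ and every $S \in O(n)$ one has $|L_k - S|_{L^\infty(\B(0, \sigma^{-1}))} > \sigma$. Applying the almost isometry inequality with $y = 0$ yields $|L_k(x)| \leq (1+\gamma_k)|x| + \gamma_k$, so on any fixed compact subset of $\R^n$ the sequence $(L_k)$ is eventually defined and uniformly bounded.

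Next I would extract a pointwise subsequential limit. Fix a countable dense subset $\{q_i\}_{i\in\N} \subset \R^n$, and by a diagonal extraction pass to a subsequence (still denoted $L_k$) along which $L_k(q_i)$ converges to some point $L_\infty(q_i) \in \R^n$ for every $i$. Passing to the limit in
\begin{equation*}
\bigl| \, |L_k(q_i) - L_k(q_j)| - |q_i - q_j| \, \bigr| \leq \gamma_k\bigl(1 + |q_i - q_j|\bigr)
\end{equation*}
shows that $L_\infty$ preserves distances on $\{q_i\}$ and fixes $0$; hence it extends uniquely by $1$-Lipschitz continuity to an isometry $L_\infty\colon\R^n \to \R^n$ with $L_\infty(0) = 0$. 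By Mazur--Ulam (or a direct elementary computation in Euclidean space, using that a distance-preserving map of $\R^n$ is affine), $L_\infty = S$ for some $S \in O(n)$.

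It remains to upgrade pointwise convergence on $\{q_i\}$ to uniform convergence on $\B(0, \sigma^{-1})$, which will contradict the choice of $L_k$. For arbitrary $x \in \B(0, \sigma^{-1})$ and $\eta > 0$, pick some $q_i$ with $|x - q_i| < \eta$; combining the triangle inequality with the almost isometry bound for $L_k$ and the Lipschitz property of $S$ gives
\begin{equation*}
|L_k(x) - S(x)| \leq (2 + \gamma_k)\eta + \gamma_k + |L_k(q_i) - S(q_i)|.
\end{equation*}
Covering $\B(0, \sigma^{-1})$ by finitely many balls of radius $\eta$ centred at points of $\{q_i\}$ makes the last term uniformly small for $k$ large; letting first $k \to \infty$ and then $\eta \to 0$ produces the desired contradiction. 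The main subtlety to watch is that $L_k$ need not be continuous, so a naive Arzel\`a--Ascoli argument does not apply directly; however, the additive error $\gamma_k$ in the definition of almost isometry plays precisely the role of a modulus of continuity in the above triangle estimate, which is enough to carry the compactness argument through.
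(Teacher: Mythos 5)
Your proof is correct. Note that the paper does not actually prove this lemma --- it is quoted verbatim from \cite[Lemma 3.5]{Der-Sch-Sim} --- but the compactness-and-contradiction argument you give (diagonal extraction on a countable dense set, identification of the limit as a distance-preserving map of $\R^n$ fixing the origin, hence an element of $O(n)$, and then the upgrade to uniform convergence using the additive error $\gamma_k$ as a substitute for equicontinuity) is the standard route and matches what the cited reference does. You also correctly flag the two genuine subtleties: that $L_k$ need not be continuous, and that one should use the elementary inner-product argument rather than Mazur--Ulam, since surjectivity of the limit map is not given a priori.
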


\begin{proof}[Proof of Lemma \ref{lem:adj-map}]
Let $\al>0$  be given. Without loss of generality, $\frac{1}{\al}\geq c_0$ and $\al \leq 10^{-10}$. Assume $\ep \leq \al^2$. 
Rescaling  $\hat g_1 =  \frac{K}{\al} g_1$ and $\hat g_2 =  \frac{K}{\al} g_2$ and $h = \frac{K}{\al} \de$ we are now in the setting (denoting  $\hat g_1$ by $g_1$  once again and   $\hat g_2$ by $g_2$ once again) that 
\begin{eqnarray}\label{ass-bded-geo.1}
 && \frac{1}{c_0} h  \leq  g_i \leq c_0 h,\nonumber \\
  && |\D^k(g_1)|^2 + |\D^k(g_2)|^2 \leq    \al^k,\quad  \forall \, k \in  \{1,2,\ldots,8\},\\
 &&  |\phi_* d(g_1)-d(g_2)| \leq  \ep \al^{-\frac 1 2}  \leq \al,  \nonumber
  \end{eqnarray}
 on $B_{d(g_1)}(0,s \al^{-\frac 1 2})$
where here now $\D$ refers to the covariant derivative  with respect to $h,$   and $| \cdot |$ is the norm with respect to $h$.  
There is an isometry $Z_i:(M_i,h) \to (K^\frac{1}{2}\al^{-\frac{1}{2} }M_i,\de)$ given by $x \to  \sqrt{\frac{K}{\al}}x.$
Pushing everything forward   by $Z_i$, we see that we may assume that
we are in the setting above, with $h=\de$ and $\D$ is the usual derivative in $\R^n$   and $|\cdot|$ is the norm with respect to $\de$.  
Taking any $p\in B_{d(g_1)}\big(0,\frac{s }{2}\al^{-\frac 1 2}\big),$
let $a_{ij} = (g_1)_{ij}(p)$ be the metric at $p$. From  the estimates 
\eqref{ass-bded-geo.1} we see that 
\begin{equation*}
\sum_{k=0}^6 |\D^k((g_1)_{ij}(\cdot)- a_{ij})|_{C^0(\B(p, \ga^{-1}))} \leq \ga,
\end{equation*}
 where $\ga = \ga(\al,n,c_0) \to 0 $
as $\al \to 0$.  Constants of this type  shall  be denoted by $\ga$  and can change from line to line, but always satisfy
$\ga(\al,n,c_0) \to 0$ as $\al \to 0$.  
We can choose  an affine map  $T_1:\R^n \to \R^n$,   with $T_1(p) = 0$, such that 
in these  coordinates,   that is for  $\ti g_1:= (T_1)_* g_1$,  we have, $(\ti g_1)_{ij}(0) =\de_{ij}$ and 
\begin{equation*}
\sum_{k=0}^6 |\D^k( (\ti g_1)_{ij}(\cdot)- \de_{ij})|_{C^0(\B(0, \ga^{-1}))} \leq \ga,
\end{equation*}
 for some $\ga=   \ga(\al,n,c_0) \to 0$. 
Similarly for $g_2$ at $\phi(p)$ we find an affine map $T_2,$ such that for $\ti g_2:= (T_2)_* g_2,$ we have  $(\ti g_2)_{ij}(0) = \de_{ij}$ and  
\begin{equation*}
\sum_{k=0}^6 |\D^k( (\ti g_2)_{ij}(\cdot)- \de_{ij})|_{C^0(\B(0, \ga^{-1}))} \leq \ga,
\end{equation*}
 for some $ \ga=  \ga(\al,n,c_0) \to 0.$
 
Defining $ L\coloneqq     T_2\,\of\, \phi \,\of\, (T_1)^{-1}$, we see that $ L : (\B(0,\ga^{-1}),d(\ti g_1)) \rightarrow \R^n$ is a $\ga$ almost isometry fixing $0$. Therefore, Lemma \ref{almostisomlem} ensures there must exist an $\ti S\in O(n)$ such that
$|\ti S- L|_{C^0(\B(0, \ga^{-1}))  }\leq   \ga(\al,n,c_0) $ where $\ga \to 0$ as $\al \to 0$.
In particular, setting  $S_p\coloneqq  (T_2)^{-1} \,\of\,  \ti S \,\of\, T_1,$ and using $(T_2)^{-1} \,\of\,  L\, \of\, T_1   = \phi,$ we obtain 
\begin{equation*}
  |  \phi - S_p|_{C^0(\B(p,\ga^{-1}))}  \leq  \ga(\al,n,c_0) \to 0,
\end{equation*}      
as $\al \to 0,$
and for $\ti \de:= (T_2)_*\de$,  (note $\frac{1}{C(c_0,n)} \de \leq \ti \de  \leq C(c_0,n)\de $ ),
\begin{equation*}
\begin{split}
  |   (S_p)_*{g_1} -g_2|_{C^0(\B(\phi(p), 200),\de))}  
&=  |\ti S_*{\ti g_1} - \ti g_2|_{C^0(T_2(\B(\phi(p), 200)), \ti \de))} \\
   &\leq  C(c_0,n)  |\ti S_*(\ti g_1 -\de) + \ti S_*{\de} -\de  -  (\ti g_2 -\de)|_{C^0(\B(0, R(c_0,n)))}\\
& =   C(c_0,n) | \ti S_*(\ti g_1 -\de)   -  (\ti g_2 -\de)|_{C^0(\B(0, R(c_0,n)))}\\
& \leq C(c_0,n) \left(| \ti S_*(\ti g_1 -\de)|_{C^0(\B(0, R(c_0,n)))}   +  |\ti g_2 -\de|_{C^0(\B(0, R(c_0,n)))}\right)\\
& = C(c_0,n) \left(| \ti g_1 - \de|_{C^0(\B(0, R(c_0,n)))}   +  |\ti g_2 -\de|_{C^0(\B(0, R(c_0,n)))}\right)\\
& \leq \ga(\al,n,c_0)\to 0\quad\text{ as $\al \to 0,$ } \label{metricstrong}
\end{split}
\end{equation*}
where we used that fact that $\ti S_* \de = \de$ in the third line.

In particular, 
\begin{equation} 
  (1-c(\al,n,c_0)) (S_p)_*{g_1} \leq g_2 \leq (1+c(\al,n,c_0))(S_p)_*{g_1},\quad\text{on $\B(\phi(p),100)$}, \label{metricnec}
\end{equation} 
with $c(\al,n,c_0) \to 0$ as $\al \to 0$.  

From the definition of $S_p$, we have 
$\D S_p = \D (T_2)^{-1}\, \of\, \D \ti S\, \of \,\D T_1 = V_2^{-1} \,\of \,\ti S\, \of\, V_1 $ and 
 \begin{equation*}
|V_1|_{C^0(\R^n, \R^n)} + |(V_1)^{-1}|_{C^0(\R^n, \R^n)}+|V_2|_{C^0(\R^n, \R^n)} + |(V_2)^{-1}|_{C^0(\R^n, \R^n)} \leq C(n,c_0),
\end{equation*}
so that
$|\D  S_{p}|_{C^0(\R^n)}  \leq c(c_0,n).$
We are ready to define our adjustment map.\\

We take  a covering by balls $(\B(p_i, 1))_{i\in I} $ of radius one of  $B_{d(g_1)}\big(0,s \al^{-\frac 1 2}\big)$ for which
$(\B(p_i,\frac{1}{3}))_{i\in I} $ are pairwise disjoint and $(\B(p_i,\frac 2 3))_{i\in I}$ still covers,  and take a partition of unity  $(\eta_i:M_1 \to \R)_{i\in I}$ subordinate to this covering:
$\eta_i= 1$ on the balls of radius $\B(p_i, \frac 1 3)$,  $\text{supp}(\eta_i)\subset \B(p_i, \frac 2 3)$.
We let $S_i\coloneqq  S_{p_i}$  and define 
\begin{equation}
\ti \phi\coloneqq  \sum_{i\in I} \eta_i S_i. \label{definitionoftiphi}
\end{equation}
As we saw above,   $|  \phi    - S_{i} |_{C^0(\B(p_i,\ga^{-1}))}  \leq  \ga\to 0.$ 
Furthermore, 
\begin{equation*}
|S_i -S_j|_{C^0(\B(p, 100))}  \leq |S_i -\phi|_{C^0(\B(p, 100))}  + |S_j - \phi|_{C^0(\B(p, 100))}  \leq \ga(\al,c_0,n) \to 0,
\end{equation*}
if $p_i, p_j \in \B(p, 100)$ per construction.
Hence 
\begin{equation*}
  | \D   S_j - \D    S_i|^2_{C^0(\B(p, 100))}  
  = |S_j -S_i|^2_{C^0(\B(p, 100))}  \leq  \gamma(\al,c_0,n) \to 0,
  \end{equation*}
   if $p_i, p_j \in \B(p, 100).$ 
Let $x \in \B(p_i, 10),$ and let $J_i$ denote the indices $j\in \N$ for which
$\B(p_j, 1) \cap \B(p_i, 10) \neq \emptyset$. Note that we can assume that there exists a uniform constant $c(n)$ such that $|J_i|\leq c(n)$. Furthermore,  
$\ti \phi|_{\B(p_i, 10)}  = \sum_{j\in J_i} \eta_j S_j$ and hence
\begin{equation*}
\begin{split}
|\D   \ti \phi - \D   S_i|_{C^0(\B(p_i, 10))} & = \Big|\sum_{j\in J_i} \D    (\eta_j \cdot S_j ) - \D  S_i\Big|_{C^0(\B(p_i, 10))}\cr
& =  \Big|\sum_{j\in J_i} \D   ( \eta_j \cdot S_j ) - \D  \Big(\sum_{j\in J_i}\eta_j\Big)S_i-\sum_{j\in J_i}(\eta_j\cdot \D S_i)\Big|_{C^0(\B(p_i, 10))}\\
& =  \Big|\sum_{j\in J_i} (\D    \eta_j) (S_j -S_i)  - \eta_j (\D  S_i-\D  S_j)\Big|_{C^0(\B(p_i, 10))}\\
& \leq \gamma(\al,n,c_0) \to 0,\quad\text{as $\al \to 0.$ }
\end{split}
\end{equation*}
That is
\begin{equation}
|\D   \ti \phi -   \D S_i|_{C^0(\B(p_i, 10))} \leq \gamma(\al,n,c_0) \to 0,\quad\text{as $\al \to 0.$ }
\label{phiSeq}
\end{equation}

Using the fact that $\ti \phi|_{\B(p_i, 10)}  = \sum_{j\in J_i} \eta_j S_j,$  we also obtain
\begin{equation*}
|\D^2(\ti \phi-S_i)|_{C^0(\B(p_i, 10))} +  |\D^3(\ti \phi-S_i)|_{C^0(\B(p_i, 10))} \leq C(n,c_0),
\end{equation*}
and hence, using   standard interpolation inequalities, see for example \cite[Appendix A,  Lemma A.5 and A.6.]{Sch-Sim-Sim-Hyp}, 
we see that 
\begin{equation*}
|\D^2(\ti \phi-S_i)|^2_{C^0(\B(p_i, 10))} \leq |\D(\ti \phi-S_i)|_{C^0(\B(p_i, 10))} |\D^3(\ti \phi-S_i)|_{C^0(\B(p_i, 10))}  \leq \gamma(\al,n,c_0).
\end{equation*}
We perform a Taylor expansion in each component and obtain for $x,y \in \B(p_i, 10)$ and $k=1,..., n$,
\begin{equation*}
 (\ti \phi -S_i)^k(x) =  (\ti \phi -S_i)^k(y)   + \D_{\al}(\ti \phi -S_i)^k(x) (x-y)^{\al} + C^k(x,y),
 \end{equation*}
where $ |C^k(x,y)|   \leq \ga(\al,n,c_0)|x-y|^2.$ Hence 
\begin{equation*}
\begin{split}
  |\ti \phi(x) -\ti \phi(y)| &\geq  |S_i(x) - S_i(y)| -\ga(\al,c,c_0)|x-y|\cr
 & =  |(T_{2,i})^{-1}\,\of\, \ti S_i\, \of\, T_{1,i}(x) - (T_{2,i})^{-1}\,\of\, \ti S_i \,\of\, T_{1,i}(y)| 
 -\ga(\al,c,c_0)|x-y|\cr
 & =  | (T_{2,i})^{-1}\,\of\, \ti S_i\, \of\, T_{1,i}(x-y)| -\ga(\al,c,c_0)|x-y|\cr
 & \geq \be(n,c_0)|x-y|,
 \end{split}
 \end{equation*}
  for $x,y \in \B(p_i, 10),$ 
where $\be(n,c_0) >0.$

In view of \eqref{metricnec} and \eqref{phiSeq} we have
\begin{equation*} 
\label{metricnec2}
  (1-c(\al,n,c_0)) (\ti \phi)_*{g_1} \leq g_2 \leq (1+c(\al,n,c_0))(\ti \phi)_*{g_1},  \quad \text{ on $\B(p_i, 1)$,}
\end{equation*} 
with $c(\al,n,c_0) \to 0$ as $\al \to 0$,
and hence on all of $ B_{d(g_1)}\big(0, s \al^{-\frac 1 2}\big).$
Combining this with the fact that $\ti \phi$ is a diffeomorphism on Euclidean balls of radius $10$,
we see 
\begin{equation*}
 (1-c(\al,n,c_0)) d(g_1)( x, y)\leq  d(g_2)(\tilde \varphi(x), \tilde{\varphi}(y))   \leq (1+c((\al,n,c_0)) d(g_1)( x, y),
\end{equation*}
on any ball of radius $10$. On the other hand, for $x \in \B(p_i, 1)$ and using the notation above, we know 
\begin{equation*}
\begin{split} 
\left|\ti \phi(x) - \phi(x)\right|& = \Big|\sum_{j\in J_i}  \eta_j S_j(x) - \phi(x)\Big| = \Big|\sum_{j\in J_i}  \eta_j ( S_j -\phi)(x)\Big|\leq c(\al,n,\de),
\end{split}
\end{equation*}
that is
\begin{equation*} 
|\ti \phi(x) - \phi(x)| \leq  c(\al,n,\de) \to 0,\quad\text{ for
$\al \to 0$.}
\end{equation*} 
This implies for points $x,y$ with $|x-y|\geq 10,$ 
\begin{equation*}
\begin{split}
  d(g_2)(\tilde \varphi(x), \tilde{\varphi}(y)) &   \leq d(g_2)( \varphi(x), \varphi(y) )+  d(g_2)( \varphi(x), \ti \varphi(x))+  d(g_2)( \varphi(y), \ti \varphi(y))\\   
 & \leq 
   d(g_2)( \varphi(x), \varphi(y) )
   + c(\al,n,\de) \\
   & \leq (1+c(\al,n,\de)) d(g_2)( \varphi(x), \varphi(y) )\\
   & \leq (1+c(\al,n,\de))^2 d(g_1)(x,y),
   \end{split}
   \end{equation*}
   and similarly,
   \begin{equation*}
  d(g_2)(\tilde \varphi(x), \tilde{\varphi}(y) )
  \geq (1-c(\al,n,\de))^2 d(g_1)(x,y).
  \end{equation*}
  
Hence, the map $\ti \phi$  satisfies 
\begin{equation*}
 (1-c(\al,n,c_0)) d(g_1)( x, y)\leq  d(g_2)(\tilde \varphi(x), \tilde{\varphi}(y))   \leq (1+c((\al,n,c_0)) d(g_1)( x, y)
\end{equation*}
on $ B_{d(g_1)}\big(0,\frac{s}{2} \al^{-\frac 1 2}\big)$.
\end{proof}

\section{Rough convergence rate}\label{sec-rou-con-rate}

In this paper we are often interested in the   following setup: $(M^n,g(t))_{t\in (0,T)}$ is a smooth solution to Ricci flow (not necessarily complete) such that
\eqref{CurvatureCond}, \eqref{DistCond}, and \eqref{ReifenbergCond}  
hold, where $d_0 = \lim_{t\downto 0}d(g(t))$ is  locally, uniquely,  well defined on $U_{p}$ for all 
$p \in M$ by  Lemma \ref{SiTopThm1}, as explained in the introduction.
 In particular: for any fixed $x_0 \in M$,  Lemma \ref{lem:curv_decay} 
  implies   that there exists an $\ti R  >0$, $T   \in (0,1]$ such that  
$|\Rm(g(t))| \leq \ep(t)/t\ \  \text{on} \ \  B_{d_0}(x_0,\ti R) \Subset U_p \Subset   M  $
where $0\leq \ep(t) \to 0$ as $t \downto 0$.  
Hence,   we can reduce $T$ if necessary  to  arrive at the setup  
\begin{align} 
& |\Rm(g(t))| \leq \frac{\ep(t)}{t} \ \  \text{on} \ \  B_{d_0}(x_0,\ti R) \Subset U_p \Subset   M  
 \ \text{ for all } \  t \in (0,T),  \tag{${a}$}  \label{a}\\
& \Rc(g(t))\geq -g(t) \ \  \text{on} \ \   B_{d_0}(x_0,\ti R) \Subset U_p \Subset M  \ \text{ for all } \  t \in (0,T),  \tag{${b}$}  \label{b}\\
& |d_{t}-d_0|\leq \ep(t)\sqrt{t} \text{ on }  B_{d_0}(x_0,\ti R )    \ \ \text{ for all }   t\in[0,T)\tag{${c}$} \label{c},
  \end{align}
  where $\ep(t) \to 0$ as $t\downto 0$.
In the following we show that we can construct solutions to  the Dirichlet problem to the Ricci-Harmonic map heat flow  
for a suitable possibly non-smooth  class  of initial data $F_0,$ where the background Ricci flows are of the type considered above. 
  More precisely, we consider initial data $F_0$ and $ \ti R>2 R >0$   satisfying  
\begin{align*}
& F_0: \, \bigg\{
   \begin{array}{rcl}
       B_{d_0}(x_0,\tilde{R})  & \rightarrow &\R^n \\
       x& \rightarrow &  ((F_0)_1(x), \ldots, (F_0)_n(x))
   \end{array} \tag{$\rm d$} \label{d} \\
&  \text{ is a }   (1+\ep_0)  \text{ bi-Lipschitz homeomorphism from $B_{d_0}(x_0,2R)$ onto its image.}    
\end{align*}
We also consider the special case that   $F_0:= D_0$ where $D_0$ are distance coordinates defined on $B_{d_0}(x_0,\ti R),$ which are 
{\bf  $(1+\ep_0)$ bi-Lipschitz}    on   $B_{d_0}(x_0,2R )$ for  a $\ti R >2R >0$  i.e.
\begin{align*}\label{chat}
&   \text{ there are  points } a_1, \ldots, a_n \in B_{d_0}(x_0,\ti R),   \text{  such that 
the map } \cr
 &D_0: \, \bigg\{
   \begin{array}{rcl}
       B_{d_0}(x_0,\tilde{R})  & \rightarrow &\R^n \\
       x& \rightarrow & (d_0(a_1,x)-d_0(a_1,x_0), \ldots, d_0(a_n,x)-d_0(a_n,x_0))
          \end{array} \tag{$\rm \hat{d}$} \\
   & \text{ is a }   (1+\ep_0)  \text{ bi-Lipschitz homeomorphism from $B_{d_0}(x_0,2R)$ onto its image.}  \\[-3ex]
\end{align*}

We recall that we say that two metrics $g,h$ on a set $\Omega \subset M$ are $\ep$-close, for $\ep > 0$, provided
$$ (1+\ep)^{-1}g \leq h \leq (1+\ep)g$$
on $\Omega$.
We notice, for given $\ep_0 \in (0,1)$,  that the condition \eqref{a} implies the condition 
\begin{align}
& |\Rm(g(t))|\leq \frac{\ep_0}{t} \tag{${a_{\ep_0}}$}  \label{aep}\
\end{align}
 for all $t \in (0,T)$ after reducing $T$ if necessary.
 The following existence result  and estimates for the $\de$-Ricci-DeTurck flow can be proved by invoking \cite[Theorem $3.11$]{Der-Sch-Sim} in this setting:  
\begin{thm} \label{RicciDeTurck}
For all $n\in \N,$  $\al_0 \in (0,1),$ there exists an $\ep_0(\al_0,n)>0$ depending only on $n$ and $\al_0$ such that the the following holds. 
Let $(M^n,g(t))_{t\in (0,T]}$, $T\leq 1,$ be  a smooth  solution to Ricci flow satisfying  \eqref{CurvatureCond} and \eqref{DistCond} and let $p \in M,$ and
$d_0$ be  the locally well defined metric coming from Lemma \ref{SiTopThm1}, with $d_0:U_p \times U_p \to \R^+_0.$ Let  $x_0 \in U_p$ and assume that   $(M^n,g(t))_{t\in (0,T]}$ satisfies  \eqref{aep},  and \eqref{c}, where 
$\ti R \geq    R  \geq 200.$ 
 Let $ F_0:B_{d_0}(x_0,R) \to \R^n$ be  a  $(1+\ep_0)$ bi-Lipschitz map with respect to $d_0$  as in $\eqref{d}$,   and  assume that {\bf one  of the following is satisfied}: 
 \begin{enumerate}
  \item \label{item-1st-ass}
  $F_0 = \lim_{i\to \infty} F_i$ uniformly where $F_i: B_{d_0}(x_0,R)\to \R^n$ is a $(1+\ep_0)$ bi-Lipschitz map  with respect to $d_{t_i}$,   for some sequence $t_i>0$ with $t_i\to 0$, {\bf or},\\
\item \label{item-2nd-ass} $(M^n,g(t))_{t\in (0,T]}$ can be smoothly extended to $  (M^n,g(t))_{t\in [0,T]}$,{\bf or},\\
\item \label{item-3rd-ass}$F_0 =D_0$ with $D_0$ as in $\eqref{chat}$. 
\end{enumerate}

Then, for any $m_0 \in  B_{d_0}(x_0,R/2)$,  there exist  an $S= S(n, \al_0)>0$  and  maps  
$$F(t): B_{d_0}(m_0,  R) \to   E_t\coloneqq  F(t)(B_{d_0}(m_0, 3/2)) \subseteq \R^n,$$  
which are solutions to the Ricci-harmonic map flow, i.e.
\begin{equation*}
\partial_t F(x,t)=\Delta_{g(t)}F(x,t) ,\quad \text{for all  $ (x,t) \in B_{d_0}(m_0,3/2)\times(0,\hat S:= \min(S,T/2))$},
\end{equation*}
 with initial condition the map $F_{0}|_{B_{d_0}(m_0,3/2)}$, with $\B(\ti m_0,  1) \subseteq E_{t}$
for $\ti m_0 = F_0(m_0)$, 
which are smooth     bi-Lipschitz  diffeomorphisms onto their images  for all $0< t \leq \hat S,$  satisfying the  following quantitative estimates: \begin{equation}
\begin{split}\label{cauchy-pb-init}
(1-\al_0) d_t (x,y) & \leq |F(x,t) -F(y,t)| \leq (1+\al_0) d_t (x,y),\\
& \ \ \ \ \ \ \ \text{for all  $ (x,t), (y,t)  \in B_{d_0}(m_0,3/2)\times[0, \hat S )$},\\
|F(x,t)-F_0(x)|&\leq c(n)\sqrt{t} ,\quad \text{for all } \quad (x,t) \in B_{d_0}(m_0,3/2)\times[0,  \hat S),
\end{split}
\end{equation}
for some positive constant $c(n)$.
Setting  $\ti g(t) = (F(t))_*(g(t))$ for $t\in (0,\hat S)$,  on  $\B(\ti m_0,1),$   we further have that   $(\ti g(t))_{t \in (0,  \hat S)}$   is a smooth family of metrics which solve the $\de$-Ricci-DeTurck flow and are $\alpha_0$-close to the $\delta$ metric:
\begin{equation} \label{metricin}
  (1+\alpha_0)^{-1} \delta \leq \ti g(t) \leq    (1+\alpha_0) \delta\, ,
 \end{equation}
 satisfying
 \begin{equation}
  |\D^k \ti g(t)|^2 \leq \frac{c(k,n)}{t^k}, \quad k\geq 0, \label{metricin2}
 \end{equation}
 for all   $t \in (0, \hat S),$ where $c(n,k)$ are constants depending only on $k$ and $n$. 
 The metric $\ti d_t: = d(\ti g(t))$ satisfies, 
 $ \ti d_t \to \ti d_0\coloneqq  (F_0)_*d_0$ uniformly on $  \B(\ti m_0, 1)$ as $t \to 0.$ 
\end{thm} 
\begin{proof}
Constants depending on $n$ shall be denoted by $c(n)$ and can change from line to line within the proof.   \\

 \noindent {\it  Proof of  the results  assuming hypothesis \eqref{item-1st-ass}:} \\
 
 The map $F_i:B_{d_{0}}(x_0,R)\rightarrow\mathbb{R}^n$  is $(1+\ep_0)$ bi-Lipschitz with respect to $g(t_i)$ implies that $|\grad^{g(t_i)} F_i|_{g(t_i)} \leq c(n)$. 
 Let   $ Z_{i}: B_{d_0}(m_0,20)\times[t_i,  \min(S(n,c_0),T)) \to \R^n$ be the solution to the Ricci-harmonic map flow $\partial_t Z_i(x,t) = \lap_{ g(t)} Z_i(x,t)$ for $(x,t) \in B_{d_0}(m_0,20)\times (t_i,\min(S(n,c_0),T)]$ with initial and boundary data given by $Z_{i}(t_i) = F_i$ and $Z_i|_{\boundary B_{d_0}(m_0,20)} = F_i|_{\boundary B_{d_0}(m_0,20)}$ coming from \cite[Theorem 2.1]{Der-Sch-Sim}.
The solution satisfies 
$|\grad^{ g(t)} Z_i(t)|\leq c(n)$ for all $t\in [t_i,\min(S(n,c_0),T)]$ 
on $B_{d_0}(x_0,20)$ as shown in
 \cite[Theorem 2.1]{Der-Sch-Sim}.
Hence, all the conditions required to apply \cite[Theorem 3.8]{Der-Sch-Sim}
are satisfied, and so the stated consequences  there (except for the inequalities \eqref{metricin2}) hold: for example 
\begin{equation}
\begin{split}\label{int-est-Z-maps}
  (1-\al_0) d_{t}(x,y) &\leq |Z_i(x,t) - Z_i(y,t)|  \leq  (1+\al_0) d_{t}(x,y), \\
 &|{\grad}^2 Z_i(x,t)| \leq \frac{c(n)}{\sqrt{t}},   \\
 &|Z_i(x,t)-Z_{i}(x,t_i)|\leq c(n)\sqrt{t},
 \end{split}
 \end{equation}
 for all $t\in (2t_i,T/2).$ Taking a limit $i \to \infty$ we obtain a smooth limit
 $F(x,t):= \lim_{i\to \infty} Z_{i}(x,t),$  $F: B_{d_0}(x_0,10)\times (0,T] \to \R^n$
being a  smooth solution to harmonic map heat flow,
 $\partial_t F(x,t) = \lap_{g(t)} F(x,t)$ for $(x,t) \in  B_{d_0}(x_0,10)\times (0,T]$
 satisfying
 all the stated inequalities 
except \eqref{metricin2}.  The inequalities \eqref{metricin2} follow from \cite[Lemma 4.2]{MilesC0paper}. \\

\noindent {\it  Proof of  the results  assuming hypothesis \eqref{item-2nd-ass}:} \\

  Since $(g(t))_{t\in [0,T]}$ is smooth, we know $ (1-\ep_0)g(s) \leq  g(0) \leq (1+\ep_0)g(s)$  for all
$s \in [0,\si)$  for $\si>0$ sufficiently small.
Hence, since $F_0$ is $(1+\ep_0)$ bi-Lipschitz with respect to $g(0),$
we must have $F_i:= F_0$ is $(1+2\ep_0)$ bi-Lipschitz with respect to $g(t_i)$ for a sequence 
$t_i \downto 0$.
Hence we may apply (i).\\

\noindent {\it  Proof of  the results  assuming hypothesis \eqref{item-3rd-ass}:} \\

 The results in this  setting are obtained in \cite[Theorem  3.11]{Der-Sch-Sim} and the proof thereof: 
the $\alpha_0$-closeness of $\ti g(t):= F(t)_* g(t)$ to $\delta$ follow from \cite[Lemma 4.2]{MilesC0paper} 
as explained in the proof of \cite[Theorem
3.11]{Der-Sch-Sim}. 
The smoothness of the solution $F$ (for $t>0$)  to the harmonic map heat flow follows from the
smoothness of $\ti g$ and $g$ (for $t>0$) and the fact that $F(t):B_{d_0}(m_0,3/2) \to F(t)(B_{d_0}(m_0,3/2))$ is a  $C^1$ diffeomorphism: see  \cite[Theorem 3.11]{Der-Sch-Sim} and the proof thereof. 
 
\end{proof}

The main result of this section is   Proposition \ref{prop-dec-2-sol-HMF}, which is used in   the proof of the main theorem, Theorem \ref{thm:main.1}.

 \begin{prop}\label{prop-dec-2-sol-HMF}
 For $n\in \N$, $\al_0 \in \R$ let $\ep_0(n,\al_0)$ be the constant from     Theorem \ref{RicciDeTurck}. 
 We assume  the following setting in this proposition, and the corollary, Corollary \ref{cor-mod-DTRF} that follows: 
 \begin{enumerate}
 \item\label{S} 
Assume  $(M_1^n,g_1(t))_{t\in (0,T]}$ and $(M_2^n,g_2(t))_{t\in (0,T]}$, $T\leq 1$,  are  two solutions to Ricci flow satisfying \eqref{a}, \eqref{b}, \eqref{c},  \eqref{CurvatureCond} and \eqref{DistCond}  for an   $\ti R \geq R \geq 200$. Let  $p\in M_1$ and $\hat p\in M_2$ and let $d_{0,1}:U_{p,1} \times U_{p,1} \to [0,\infty)$ and $d_{0,2}:U_{\hat p,2} \times U_{\hat p,2} \to [0,\infty)$   are  the locally well defined metrics at time zero     on  $M_1$, respectively on  $M_2$   given by  Lemma  
 \ref{SiTopThm1}.  Let  $x_0 \in U_{p,1}$, $\hat x_0 \in U_{\hat p,2}$,   
 and  assume that there is  an isometry $\psi_0: U_{p,1} \to \psi_0(U_{p,1}) \subseteq U_{\hat p,2}$, $\psi(x_0) = \hat x_0.$ \\
 \item
  Assume that both solutions satisfy \eqref{aep},  and    that  $  D_{0}: B_{d_{0,1}}(x_0,4R) \to \R^n$ 
satisfies   \eqref{chat} for $d_0= d_{0,1}$ and the $\ep_0$ specified at the beginning of the statement of this theorem.     
 Let 
  $F_1:B_{d_{0,1}}(x_0,\frac 3 2)\times(0,\hat S)\rightarrow \R^n$, $F_2:B_{d_{0,2}}(\hat x_0,\frac 3 2)\times(0,\hat S)\rightarrow \R^n$,
  be the two smooth, $(1 + \al_0)$ bi-Lipschitz solutions to the Ricci-harmonic map heat flow provided by [(iii), Theorem \ref{RicciDeTurck}] with initial value the map $D_0$, respectively $ D_0\, \of\,  (\psi_0)^{-1}.$ 
   Denote the corresponding solutions to the $\de$-Ricci-DeTurck flow by $\tilde{g}_i(t)$, $i=1,2$. Let $(t_k)_k$ be any sequence of positive times with $t_k \to 0$ as $k\to \infty$.
    \end{enumerate}
   Then there exists a family of diffeomorphisms $\tilde{\varphi}_k$ defined on $B_{d(\ti g_1(t_k))}\left(E_{t_k}^1,\sqrt{t_k}\right)$ where $E_{t_k}^1\coloneqq F_1(t_k)\left(B_{d_0}(x_0,3/2)\right)$ such that 
\begin{align}
 (1-\varepsilon(t_k))\tilde{g}_2(t_k)&\leq(\tilde{\varphi}_k)_{\ast}(\tilde{g}_1(t_k))\leq (1+\varepsilon(t_k))\tilde{g}_2(t_k),\label{metricin-adj-map}\\
 |{\ti \phi}_k - \operatorname{Id}|  &\leq c(n)\sqrt{t_k},  \label{phiesty}
\end{align}
where $\varepsilon(t_k) \to 0$ as $t_k \to 0$.
\end{prop}

\begin{proof}
Let $(t_k)_k$ be any sequence of positive times with $t_k \to 0$ as $k\to \infty$ that we fix once and for all. Let $\varphi_k\coloneqq F_2(t_k)\,\of\,    \psi_0 \,\of\, (F_1(t_k))^{-1}$ defined on $E_k^1\coloneqq F_1(t_k)\left(B_{d_0}\left(x_0,\frac{3}{2}\right)\right)$. 
We first note that if $q = F_1(t_k)^{-1}(p)$,
\begin{equation}
\begin{split}
   d(\ti g_2(t_k))(  \phi_k(p), \operatorname{Id}(p))&=
  d(\ti g_2(t_k))(F_2(t_k) \circ    \psi_0(q) , F_1(t_k)(q))\\
&\leq  d(\ti g_2(t_k))(F_2(t_k) \circ    \psi_0(q), D_0(q))\\
&\quad + d(\ti g_2(t_k))(F_1(t_k)(q), D_0(q))\\
&\leq 2|F_2(t_k) \circ    \psi_0(q) -D_0(q)| + 2|F_1(t_k)(q) -D_0(q)| \\
& \leq c\sqrt{t_k} + 2 |F_2(0) \circ \psi_0(q) -D_0(q)|\\
&\quad + 2|F_1(0)(q) -D_0(q)| + c\sqrt{t_k}  \\
& =   c\sqrt{t_k} \to 0 \text{ as } t_k \to 0\, , \label{firstc0}
\end{split}
\end{equation}
  which yields 
$$ |\phi_k(\cdot) - \operatorname{Id}|_{C^0(\B(0,1))} \leq c \sqrt{t_k}\, .$$ 
Define $\ti g_1^k\coloneqq \ti g_1(t_k), $   and $\ti g_2^k\coloneqq \ti g_2(t_k)$, $k\geq 0$. Then, the sequences $(\ti g_1^k)_{k\geq 0}$ and $(\ti g_2^k)_{k\geq 0}$  satisfy  the estimates
\eqref{metricin} and \eqref{metricin2} with $t=t_k$. Also,  the distance distortion estimates \eqref{c} imply:
\begin{equation*}
\begin{split}
&\left| (\phi_k)_*d(\ti g_1^k) (x,y)-d(\ti g_2^k)(x,y)\right|=\left| (F_2(t_k)\, \of\, \psi_0)_* d(g_1(t_k))(x,y)-d(\ti g_2^k)(x,y)\right|\\
&=| d(g_1(t_k))(  (F_2(t_k) \,\of\, \psi_0)^{-1}(x),(F_2(t_k) \,\of\, \psi_0)^{-1}(y))\\
&\quad\quad\quad\quad\quad\quad\quad\quad\quad\quad\quad\quad\quad\quad-d(g_2(t_k))((F_2(t_k))^{-1}(x),(F_2(t_k))^{-1}(y))|\\
& \leq  \left| d_{0}((F_2(t_k)\, \of\, \psi_0)^{-1}(x),(F_2(t_k) \,\of\, \psi_0)^{-1}(y))    -  {\psi_0}_{\ast}d_{0}((F_2(t_k))^{-1}(x),(F_2(t_k))^{-1}(y))\right|  \\
&\quad +\ep(t_k)\sqrt{t_k} \\
& =  \left| {\psi_0}_* d_{0}( (F_2(t_k)  )^{-1}(x),(F_2(t_k) )^{-1}(y) )   -  {\psi_0}_{\ast}d_{0}((F_2(t_k))^{-1}(x),(F_2(t_k))^{-1}(y))\right| \\
&\quad  +\ep(t_k)\sqrt{t_k} \\
& =  \ep(t_k)\sqrt{t_k} .
\end{split}
\end{equation*}
These facts let us apply Lemma \ref{lem:adj-map} to the sequences of metrics $\ti g_1^k,$ and $ \ti g_2^k,$   defined on a Euclidean ball of radius $1$, with   $K\coloneqq 100 t_k^{-1}$, $c_0= 100/99$, $s = 1$ and $\phi = \varphi_k$. 
We obtain 
 the existence of a family of diffeomorphisms $\ti\varphi_k$ defined on $\B(0,\frac 1 2)$ having the required properties: 
 the estimate \eqref{phiesty} follows from  the third estimate of \eqref{metric-close} and \eqref{firstc0}. 
\end{proof}
As a consequence of Proposition \ref{prop-dec-2-sol-HMF}, we can measure the difference of the two corresponding solutions to $\de$-Ricci-DeTurck flow:
\begin{cor}\label{cor-mod-DTRF}
We assume the setting \eqref{S} of the previous Proposition \ref{prop-dec-2-sol-HMF}. 
For   $\al_0 \in (0,1),$  there exists an   $0<\ep_1= \ep_1(\al_0,n)$ where  $\ep_1$ is bounded from above by the  $\ep_0(\al_0,n)$ of Theorem \ref{RicciDeTurck},    such that 
 if   $  D_{0}: B_{d_{0,1}}(x_0,2R) \to \R^n$ 
satisfies   \eqref{chat} for $d_{0,1}$ in place of $d_0$, and $\ep_1$ in place of $\ep_0,$ 
and  $|\Rm(\cdot,t)|\leq \frac{\ep_1}{t}$ for all $t\in (0,T)$ for both solutions $g_1,g_2,$
then the following holds. 

Let  $\ti g_i(t)\coloneqq (F_i(t))_{\ast}g_i(t)$, $t\in(0,\hat S)$, $i=1,2$ be  the associated solutions to $(g_i(t))_{t\in(0,T)}$ to $\de$-Ricci-DeTurck flow coming out of $ (D_0)_{\ast}d_{0,1}$   respectively $ D_0\, \of\,  (\psi_0)^{-1}$ in the distance sense provided by [(iii), Theorem \ref{RicciDeTurck}], and constructed in the proof thereof.  Let $(t_k)_{k\in \N}$ be any sequence of positive times with $t_k \to 0$ as $k\to \infty$. 

Then there exists an $   S =   S(n,\al_0)  $    and   a  solution  $(\hat{g}_1^k(t)=(\hat{F}_1^k(t))_{\ast}g_1(t))_{t\in[t_k, \ti S )}$, $\ti S := \min(S,T/4)$, to $\de$-Ricci-DeTurck flow associated to $(g_1(t))_{t\in[t_k,T)}$ which is $\al_0$-close to the $\delta$ metric on $\B(0,\frac 1 2)$   such that 
\begin{equation}
\begin{split}\label{est-app-sequ}
\lim_{t_k\rightarrow 0^+}|\hat{ g}_1^k(t_k)-\ti g_2(t_k)|_{\de}&=0,\quad \text{uniformly on $\B(0,\frac 1 2)$},\\
|\hat{F}_1^k(t)- D_0|&\leq c(n)\sqrt{t},\quad \text{on $B_{d_0}\big(x_0,\tfrac{3}{2}\big)$, for $t\in[t_k,\ti S) $}.
\end{split}
\end{equation}
\end{cor}

\begin{proof}
For $\alpha_0\in (0,1),$ let $\ep_0$ be the constant provided in Theorem \ref{RicciDeTurck}. 
We now   choose $\ep_1 $ so that if  $  D_{0}: B_{d_{0,1}}(x_0,4R) \to \R^n$ 
satisfies   \eqref{chat} for $d_{0,1}$ in place of $d_0$, and $\ep_1$ in place of $\ep_0,$
 that then the solutions 
  $F_1:B_{d_{0,1}}(x_0,\frac 3 2)\times(0,\hat{S})\rightarrow \R^n$, $F_2:B_{d_{0,2}}(\hat x_0,\frac 3 2)\times(0,\hat{S})\rightarrow \R^n$,
  constructed in the proof of  Theorem \ref{RicciDeTurck}  remain
$1+\ep_0/2$ Bi-Lipschitz on $B_{d_{0,1}}(x_0,3/2)$ respectively  
$B_{d_{0,2}}(x_0,3/2)$ for $t\in [0,\hat S].$ This means 
\begin{equation} \label{metricin2b}
  \left(1+\frac{\ep_0}{2}  \right)^{-1} \delta \leq \ti g(t) \leq    \left(1+\frac{\ep_0}{2}\right) \delta\, ,
 \end{equation}
for $t\in [0,\hat S].$

Let $\hat{F}_1(t_k)\coloneqq \tilde{\varphi}_k\circ F_1(t_k)$ where the family of maps $\tilde{\varphi}_k$ is obtained from  Proposition \ref{prop-dec-2-sol-HMF}.   Then \eqref{metricin-adj-map} implies the first statement in \eqref{est-app-sequ}. 
Furthermore,  $\hat{F}_1(t_k)$ is $(1+\ep_0)$ bi-Lipschitz with respect to $g_1(t_k)$ in view of
\eqref{metricin2b} and \eqref{metricin-adj-map}. Thus we can apply Theorem \ref{RicciDeTurck} to  $\hat{F}_1(t_k)$ and $(M_1^n,g_1(t))_{t\in [t_k,T)}$ to 
obtain solutions $\hat{F}^k_1(t)$ for $t\in [t_k,\ti S)$ to   Ricci harmonic map heat flow   which are  $(1+\al_0)$ bi-Lipschitz and diffeomorphisms onto their image. Hence  $(\hat g^k_1(t):= \hat{F}^k_1(t)_*(g_1(t)))_{t\in [t_k,\ti S]}$ 
satisfy   $(1-\al_0) \de \leq \hat g^k_1(t) \leq (1+\al_0) \de$ as required.
 The second estimate in \eqref{est-app-sequ} is a direct consequence of previously established estimates: if $t\in[t_k,\ti{S})$,
\begin{equation*}
\begin{split}
|\hat{F}_1^k(t)- D_0|&\leq |\hat{F}_1^k(t)- \hat{F}_1^k(t_k)|+|\hat{F}_1^k(t_k)- D_0|\\
&\leq c\sqrt{t-t_k}+|\hat{F}_1^k(t_k)- D_0|\\
&\leq c\sqrt{t}+|(\ti\varphi_k-\operatorname{Id})(F_1(t_k))|+|F_1(t_k)-D_0|\\
&\leq c\sqrt{t},
\end{split}
\end{equation*}
which is the second inequality of \eqref{est-app-sequ}. 
Here we have used \eqref{cauchy-pb-init} from Theorem \ref{RicciDeTurck}  in the second and the  last inequality, and  \eqref{phiesty} in the last inequality.
\end{proof}

 \section{Polynomial convergence rate}\label{sec-pol-con-rate}
  We start by establishing a (faster than) polynomial convergence rate for the difference of two solutions to $\de$-Ricci-DeTurck flow in the $L^2_{loc}$ sense.
\begin{lemma}[Faster-than-polynomial $L^2$ convergence rate]\label{lemma-fast-L-2}
Let $n\geq 2$ be an integer. Then there exists an $\ep_0(n)>0$  depending only on $n$ such that the following is true.  Let $\ep\in (0,\ep_0(n)]$ and  $(\ti g_i(t))_{t\in[0,T)}$, $i=1,2$, be two smooth solutions to $\de$-Ricci-DeTurck flow on $\B(0,R)\times [0,T)$ which are $\ep$-close to the Euclidean metric, 
 \begin{equation*}
 \begin{split}\label{basic-ass-conv-rate}
 (1+\ep)^{-1} \delta \leq \ti g_i(t) \leq (1+\ep)\delta \quad\text{on $\B(0,R)\times [0,T)$},
 \end{split}
  \end{equation*} 
for $i=1,2$. For each $k\in\N,$ $r>0$, there exists $C_k(n)=C(n,k)>0$  and   $V(n,k,r)$ which  is non-decreasing in $r,$ 
    such that if $|\ti g_2(0)-\ti g_1(0)|^2_{\de}\leq \tau_0$ on $\B(0,R)$, then
  \begin{equation*}
\dashint_{\B(0,2^{-k} R)}|\ti g_1(t) -\ti g_2(t)|^2_{\de}\,dx\leq   \tau_0 \cdot V\left(n,k,\tfrac{t}{R^2}\right) + \Big(\frac{t}{R^2}\Big)^{\frac{k}{2}} \cdot C_k(n)  \quad t\in[0,T). 
\end{equation*}
 
\end{lemma}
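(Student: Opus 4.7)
The plan is to derive a linear parabolic equation for the difference $h := \tilde g_1 - \tilde g_2$, prove a one-step weighted $L^2$ Caccioppoli-type estimate, and iterate it on shrinking concentric balls to gain an extra factor of $\sqrt{t/R^2}$ at each application. By the parabolic scaling invariance of $\delta$-Ricci-DeTurck flow ($x\mapsto x/R$, $t\mapsto t/R^2$), I would first reduce to $R=1$, so that the target becomes $\dashint_{\B(0,2^{-k})}|h(t)|^2\,dx \leq \tau_0 V(n,k,t) + C_k(n)\,t^{k/2}$.

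Subtracting the two $\delta$-Ricci-DeTurck equations $\partial_t \tilde g_i = \tilde g_i^{kl}\partial_k\partial_l \tilde g_i + Q(\tilde g_i^{-1},\partial\tilde g_i,\partial\tilde g_i)$ yields, schematically,
\begin{equation*}
\partial_t h = \tilde g_1^{kl}\partial_k\partial_l h + (\tilde g_1^{-1}-\tilde g_2^{-1})\ast\partial^2\tilde g_2 + \tilde g^{-1}\ast\partial\tilde g\ast\partial h + (\tilde g_1^{-1}-\tilde g_2^{-1})\ast(\partial\tilde g)^{\ast 2}.
\end{equation*}
The $\varepsilon(n)$-closeness of $\tilde g_i$ to $\delta$ gives uniform ellipticity of $\tilde g_1^{kl}$ together with the algebraic bound $|\tilde g_1^{-1}-\tilde g_2^{-1}|\leq c(n)|h|$, while the standard interior estimates for $\delta$-Ricci-DeTurck flow (used already in the proof of Theorem \ref{RicciDeTurck} and established in \cite[Lemma 4.2]{MilesC0paper}) supply $|\partial^m \tilde g_i|^2 \leq C(n,m)/t^m$ on balls slightly smaller than $\B(0,1)$. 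These pointwise bounds control every coefficient in the equation for $h$.

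For the base step, I pair the equation with $\eta^2 h$ for a cutoff $\eta\in C_c^\infty(\B(0,1))$ with $\eta\equiv 1$ on $\B(0,1/2)$, and integrate. After integration by parts the principal term gives $-c\int\eta^2|\nabla h|^2$ plus absorbable cross terms, the $\partial\tilde g\ast\partial h\ast h$ pieces are controlled by Cauchy--Schwarz against the gradient term, and the delicate zero-order contribution $h\cdot(\tilde g_1^{-1}-\tilde g_2^{-1})\partial^2\tilde g_2$, which has pointwise size $|h|^2/t$, is handled by a further integration by parts transferring one derivative off $\partial^2\tilde g_2$. This converts it into a $\partial h\ast h\ast\partial\tilde g_2 \lesssim |h||\nabla h|/\sqrt{t}$ contribution (absorbable into the good gradient term after Cauchy--Schwarz) together with a cutoff piece $|\nabla\eta|\,|h|^2/\sqrt{t}$ controllable by the pointwise bound $|h|\leq 2\varepsilon(n)$. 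Gronwall on a time interval then yields the base estimate $\int_{\B(0,1/2)}|h(t)|^2 \leq C(n)\int_{\B(0,1)}|h(0)|^2 + C(n)\sqrt{t}$, which upon averaging is the $k=1$ case.

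For the induction, assuming the estimate holds at level $k$, I rerun the same argument on the parabolic cylinder $\B(0,2^{-k})\times[t/2,t]$ using as ``initial data'' at time $t/2$ the inductive bound $\tau_0 V(n,k,t/2) + C_k(n)(t/2)^{k/2}$. Parabolic rescaling from scale $1$ to scale $2^{-k}$ shows this step picks up one additional factor of $\sqrt{t}$, with the $2^k$ factors absorbed into the new constant $C_{k+1}(n)$, advancing the exponent from $t^{k/2}$ to $t^{(k+1)/2}$ and producing a new $\tau_0 V(n,k+1,t)$ from the propagated Gronwall factor; monotonicity of $V(n,k,\cdot)$ in the second argument is preserved. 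The principal obstacle throughout is the apparent $|h|^2/t$ singularity in the zero-order term, which if treated naively gives $f'(t) \leq (C/t)f(t) + g(t)$ and through Gronwall a $t^C$-type blowup incompatible with $f(t_0)\to 0$ as $t_0\downarrow 0$; the key point is that $(\tilde g_1^{-1}-\tilde g_2^{-1})$ carries one power of $h$, so integration by parts spreads the $1/t$ singularity as $1/\sqrt{t}\cdot 1/\sqrt{t}$ allowing Cauchy--Schwarz against the Dirichlet-energy term $\int\eta^2|\nabla h|^2$ to close the estimate.
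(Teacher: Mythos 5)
Your energy estimate does not close, and the step where it fails is exactly the one you flag as delicate. After integrating by parts to move one derivative off $\partial^2\ti g_2$, the resulting term is of the schematic form $\int \eta^2\, h * \partial h * \partial \ti g_2$, with $|\partial\ti g_2|\lesssim t^{-1/2}$. Cauchy--Schwarz against the Dirichlet term gives
\begin{equation*}
\int \eta^2 |h|\,|\partial h|\,|\partial\ti g_2| \;\leq\; \epsilon \int \eta^2 |\partial h|^2 \;+\; C_\epsilon \int \eta^2 |h|^2 |\partial\ti g_2|^2 ,
\end{equation*}
and the second term is of size $\tfrac{C}{t}\int \eta^2|h|^2$ --- precisely the $f'(t)\leq \tfrac{C}{t}f(t)$ Gronwall obstruction you correctly identify as fatal. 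The integration by parts only redistributes the $1/t$ singularity as $t^{-1/2}\cdot t^{-1/2}$; it does not remove it, and the gradient term can absorb only the $|\partial h|^2$ half of the product. The same problem already afflicts your treatment of the first-order pieces $\partial\ti g * \partial h * h$. The device the paper uses to escape this (taken from \cite[Lemma 6.1]{Der-Sch-Sim}) is to estimate not $|h|^2$ but the weighted quantity $v=|h|^2\bigl(1+\lambda(|\ti g_1-\de|^2+|\ti g_2-\de|^2)\bigr)$ with $\lambda^{-1}\sim\sqrt{\ep(n)}$: differentiating the weight produces the \emph{good} negative term $-\lambda|h|^2\bigl(|\D\ti g_1|^2+|\D\ti g_2|^2\bigr)$ in the evolution inequality, and it is against this term (not the Dirichlet term) that all the $|h|^2|\D\ti g|^2$-type contributions are absorbed. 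Without some substitute for this weight your base estimate $\int_{\B(0,1/2)}|h(t)|^2\leq C\tau_0+C\sqrt{t}$ is not established.

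A secondary issue: your induction restarts the Caccioppoli estimate at time $t/2$ on the smaller ball and attributes the gain to parabolic rescaling, but the inhomogeneous forcing in the one-step estimate is an absolute $C\sqrt{t}$ (coming from $|h|\leq\ep(n)$ and $|\D h|\lesssim t^{-1/2}$ on the cutoff annulus), so rerunning it verbatim cannot improve the exponent beyond $\sqrt{t}$. In the paper the gain at level $k+1$ comes from the fact that the right-hand side of the differential inequality is supported on the annulus where $|\D\eta|\neq 0$ and is controlled by $\frac{c}{\sqrt{s}}\dashint|\D\eta||h|^2$ plus a cross term involving the accumulated Dirichlet energy $\int_0^t\dashint|\D h|^2$; both are bounded by the induction hypothesis at level $k$, and integrating $\frac{c}{\sqrt{s}}\cdot s^{k/2}$ in time yields $t^{(k+1)/2}$. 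For this to work the inductive statement must carry the space-time gradient integral $\int_0^t\dashint_{\B(0,2^{-k}R)}|\D h|^2$ alongside the spatial $L^2$ norm; your base step discards it.
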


\begin{proof}[Proof of Lemma \ref{lemma-fast-L-2}]
Following the statement and the proof of \cite[Lemma $6.1$]{Der-Sch-Sim}, consider the function 
\begin{equation*}
v(t)\coloneqq  |h(t)|^2 \left(1+ \lambda ( |\ti g_1(t)-\delta|^2 + |\ti g_2(t)-\delta|^2)\right),\quad \lambda >0\, ,
\end{equation*}
  where $h(t) = \tilde g_1(t) - \tilde g_2(t)$.  Define  
$$\tilde{h}^{ab}(t)\coloneqq  \frac 12 \left(\ti g_1^{ab}(t)+\ti g_2^{ab}(t)\right) \quad \text{and} \quad \hat{h}^{ab}(t)\coloneqq  \frac 12 \left(\ti g_1^{ab}(t)-\ti g_2^{ab}(t)\right)\, .$$ We record the fact that $v$ satisfies the following differential inequalities for $\lambda =  \frac{1}{ \sqrt{\ep}},$ if $\ep\in (0,\ep_0(n)]$ where $\ep_0(n)>0$ is sufficiently  sufficiently small ($\ep_0(n)$ is a constant depending only on $n$):
\begin{equation}
\begin{split}\label{ineq-v-lovely}
\partt v &\leq  \tilde{h}^{ab}\partial_a \partial_b v -   \lambda |h|^2 \left(|\D \ti g_1|^2 + |\D \ti g_2|^2\right)  -   |\D h|^2\\
 &\ \ \ + \left(1+ \lambda ( |\ti g_1-\delta|^2 + |\ti g_2-\delta|^2)\right) \Big( h * \hat{h} *\D^2 (\ti g_1+\ti g_2)\Big)\,  \\[-1ex] 
 & \ \ \ + \lambda |h|^2 \sum_{l=1}^2\big(\hat h *  (\ti g_{l} -\de)* \D^2\ti  g_l\big).
\end{split}
\end{equation}
 and    $ v(t) \leq 2|h(t)|^2$.    Observe that we always have $|h(t)|^2 \leq v(t)$ by definition of $v(t)$.

Now, choose a smooth cut-off function $\eta: \B(0,R)\rightarrow[0,1]$ such that $\eta\equiv 1$ on $\B(0,R/2)$ with support in $\B(0,R)$ and such that $|\D\eta|\leq c\cdot R^{-1}$ for some positive universal constant $c$. Then multiplying \eqref{ineq-v-lovely} by $\eta$ and integrating over $\B(0,R)$ gives:

\begin{equation*}
\begin{split}
\partt \int_{\B(0,R)} \eta v &\leq   \int_{\B(0,R)}\eta\tilde{h}^{ab}\partial_a \partial_b v   - \lambda \int_{\B(0,R)}\eta |h|^2 \left(|\D \ti g_1|^2 + |\D \ti g_2|^2\right)  -  \int_{\B(0,R)} \eta |\D h|^2\\
& \ \ \ + \int_{\B(0,R)} \eta \left(1+ \lambda ( |\ti g_1-\delta|^2 + |\ti g_2-\delta|^2)\right) ( h * \hat{h} *\D^2 (\ti g_1+\ti g_2)) \\
& \ \ \ + \lambda \int_{\B(0,R)}  \eta  \sum_{l=1}^2|h|^2 (\hat h *  (\ti g_{l} -\de)* \D^2 \ti g_l).
\end{split}
\end{equation*}

Integrating the first and last two terms on the right hand side of the above inequality by parts, we get:
\begin{equation*}
\begin{split}
\partt \int_{\B(0,R)} \eta v &\leq   -\int_{\B(0,R)}\partial_a \left(\eta\tilde{h}^{ab}\right)\partial_b v   - \lambda \int_{\B(0,R)}\eta |h|^2 \left(|\D \ti g_1|^2 + |\D \ti g_2|^2\right) - \int_{\B(0,R)} \eta |\D h|^2\\
& \ \ \ + \int_{\B(0,R)} \D \Big( \eta\big(1+ \lambda ( |\ti g_1-\delta|^2 + |\ti g_2-\delta|^2)\big) * h * \hat{h} \Big )    *  \D (\ti g_1+\ti g_2) \\
& \ \ \ + \lambda \int_{\B(0,R)} \sum_{l=1}^2 
\D  \Big(\eta |h|^2( \hat h \ast  (\ti g_{l} -\de)) \Big) * \D\ti  g_l\\
& =: A+ B +C +D + E\, . 
\end{split}
\end{equation*}
 We can now mimick the estimates of each integral quantity as in the proof of \cite[Lemma $6.1$]{Der-Sch-Sim}: recalling that $|\D\tilde g_i| \leq c(n) t^{-1/2}$ for $i=1,2$ and   taking into account the additional presence of the cut-off function $\eta$, we see  
\begin{equation*}
\begin{split}
 |A|  \leq&\, c(n)(1+2\lambda\ep^2)\int_{\B(0,R)}\eta( |\D \ti g_1| + |\D\ti g_2|)|h||\D h|
\\
&\quad+c(n)\lambda\ep\int_{\B(0,R)} \eta |h|^2 ( |\D \ti g_1|^2 + |\D \ti g_2|^2) +c(n)(1+\ep)\int_{\B(0,R)}|\D v||\D\eta|\\
\leq &\,c(n)(1+2\lambda\ep^2)\int_{\B(0,R)}\eta( |\D \ti g_1| + |\D\ti g_2|)|h||\D h|
+c(n)\lambda \ep\int_{\B(0,R)}  \eta |h|^2 ( |\D \ti g_1|^2 + |\D \ti g_2|^2)\\
&+ c(n)\int_{\B(0,R)}\left[2(1+2\lambda \ep^2)|h||\D h| 
+ \lambda \ep|h|^2  ( |\D \ti g_1| + |\D \ti g_2|)\right]|\D\eta|\\
\leq &\,c(n)(1+2\lambda\ep^2)^2\int_{\B(0,R)}\eta |h|^2( |\D \ti g_1|^2 + |\D\ti g_2|^2)  + \frac{1}{10} \int_{\B(0,R)} \eta   |\D h|^2\\
&+c(n)\lambda  \ep \int_{\B(0,R)} \eta |h|^2 ( |\D \ti g_1|^2 + |\D \ti g_2|^2)  \\
&+ \frac{c(n)\varepsilon\lambda}{\sqrt{t}}\int_{\B(0,R)}|\D \eta||h|^2+c(n)(1+2\lambda\ep^2)^2\bigg(\int_{\B(0,R)}|\D\eta||h|^2\bigg)^{\frac{1}{2}}\bigg(\int_{\B(0,R)}|\D\eta||\D h|^2\bigg)^{\frac{1}{2}}\, ,
\end{split}
\end{equation*}
 
A similar analysis can be performed on the integrals $D$ and $E$ to show:
\begin{equation}
\begin{split}\label{gal-est-v}
\partt \int_{\B(0,R)} &\eta v + \frac{1}{2}\int_{\B(0,R)}\lambda \eta |h|^2 \left(|\D \ti g_1|^2 + |\D \ti g_2|^2\right)+\eta |\D h|^2 \leq\\
&\frac{1}{\sqrt{t}}\int_{\B(0,R)}|\D\eta||h|^2+c(n)\bigg(\int_{\B(0,R)}|\D\eta||h|^2\bigg)^{\frac{1}{2}}\bigg(\int_{\B(0,R)}|\D\eta||\D h|^2\bigg)^{\frac{1}{2}},
\end{split}
\end{equation}
where the constants $c(n)$ may have changed from line to line but depend still only on $n$.

By dividing by  $\mathcal{H}^n_{d(\delta)}(\B(0,R))$ and using the facts that 
\begin{equation*}
|\D\eta|\leq c(n)R^{-1},\quad |h|^2 \leq \ep_0(n),\quad |Dh|^2 \leq \frac{c(n)}{t},
\end{equation*}
 one gets a first rough convergence rate at $t=0$:
\begin{equation}
\begin{split}\label{first-est-v-int}
\partt \dashint_{\B(0,R)} \eta v + &\frac{1}{2}\dashint_{\B(0,R)}\lambda \eta |h|^2 \left(|\D \ti g_1|^2 + |\D \ti g_2|^2\right)+\eta|\D h|^2 \leq\frac{c(n)R^{-1}}{\sqrt{t}},
\end{split}
\end{equation}
 for $t\in(0,T)$. By integrating in time \eqref{first-est-v-int} and by the assumption on the initial condition:
\begin{equation}
\begin{split}\label{first-est-v}
 \dashint_{\B(0,2^{-1} R)}  |h(t)|^2 + &\int_0^t\dashint_{\B(0,2^{-1}R)}\lambda  |h|^2 \left(|\D \ti g_1|^2 + |\D \ti g_2|^2\right)+|\D h|^2 \leq c(n)\tau_0+c(n)R^{-1}\sqrt{t},
\end{split}
\end{equation}
for $t\in(0,T)$, where we have used 
that $|h(t)|^2\leq   v(t) $  as we noted  at the beginning of the proof. \\


 {\bf Claim:} For every $k\in \mathbb{N}$, we have
\begin{equation*}
\dashint_{\B(0,2^{-k}R)} \!\!\! |h(t)|^2 + \int_0^t\dashint_{\B(0,2^{-k}R)}\!\!\!   |\D h|^2
\leq V\left(n,k,\tfrac{t}{R^2}\right)\tau_0+C_kR^{-k} t^{\frac{k}{2}},
\end{equation*}
for  $t\in(0,T)$, where $V\left(n,k,\frac{t}{R^2}\right)$ denotes a non-negative function which is non-decreasing in the third variable. 
 
This follows by induction, assuming that the statement holds for $k$,  by using the induction assumption in \eqref{gal-est-v} applied to a cut-off function $\eta$ such that $\eta\equiv 1$ on   $\B(0,2^{-k}R)$ with support in $\B(0,2^{-k+1}R)$, and  $|D\eta|\leq \frac{c_k}{R}$:
\begin{equation*}
\begin{split}\label{sec-est-v}
& \dashint_{\B(0,2^{-k-1}R)}  |h(t)|^2 + \int_0^t\dashint_{\B(0,2^{-k-1}R)}  |\D h|^2\\
 & \leq c \tau_0 + \int_0^t\frac{c_k R^{-1}}{\sqrt{s}}\dashint_{\B(0,2^{-k}R)}|h|^2 +c_kR^{-1}\bigg(\int_0^t\dashint_{\B(0,2^{-k}R)}|h|^2\bigg)^{\frac{1}{2}}\bigg(\int_0^t\dashint_{\B(0,2^{-k} R)}|\D h|^2\bigg)^{\frac{1}{2}}\\
 &\leq c\tau_0 + \int_0^t\frac{cR^{-1}}{\sqrt{s}}\left(c_kV\left(n,k,\tfrac{s}{R^2}\right)\tau_0+c_kC_k\left(\tfrac{s}{R^2}\right)^{\frac{k}{2}}\right)\,ds\\
 &\quad+ c_kR^{-1}\left(\int_0^tV\left(n,k,\tfrac{s}{R^2}\right)\tau_0+C_k\left(\tfrac{s}{R^2}\right)^{\frac{k}{2}}\,ds\right)^{\frac{1}{2}}\left(V\left(n,k,\tfrac{t}{R^2}\right)\tau_0+C_k\left(\tfrac{t}{R^2}\right)^{\frac{k}{2}}\right)^{\frac{1}{2}}\\
 &\leq c \tau_0+ c_k R^{-1}\sqrt{t}\,V\left(n,k,\tfrac{t}{R^2}\right)\tau_0+c_kC_kR^{-(k+1)}\int_0^ts^{\frac{k-1}{2}}\,ds\\
 &\quad +c_k\left(V\left(n,k,\tfrac{t}{R^2}\right)\tfrac{\sqrt{t}}{R}\sqrt{\tau_0}+C_k\left(\tfrac{\sqrt{t}}{R}\right)^{\frac{k}{2}+1}\right)\left(V\left(n,k,\tfrac{t}{R^2}\right)\sqrt{\tau_0}+C_k\left(\tfrac{\sqrt{t}}{R}\right)^{\frac{k}{2}}\right)\\
 &\leq c\tau_0 + c_kR^{-1}\sqrt{t}V\left(n,k,\tfrac{t}{R^2}\right)\tau_0+c_kC_kR^{-(k+1)}t^{\frac{k+1}{2}}+c_kV\left(n,k,\tfrac{t}{R^2}\right) \sqrt{\tau_0}\left(\tfrac{\sqrt{t}}{R}\right)^{\frac{k}{2}+1}\\
 &\leq V\left(n,k+1,\tfrac{t}{R^2}\right)\tau_0+C_{k+1}R^{-(k+1)} t^{\frac{k+1}{2}},
\end{split}
\end{equation*} 
for $t\in(0,T)$, where $V\left(n,k+1,\frac{t}{R^2}\right)$ denotes a non-negative function which is non-decreasing in the third variable and which may vary from line to line. A similar remark applies to $C_k$. Here we have used the induction assumption in the second inequality together with the elementary inequality $\sqrt{a+b}\leq \sqrt{a}+\sqrt{b}$ for real numbers $a,b\geq 0$ in the third inequality. Finally, Young's inequality is invoked in the last inequality.
\end{proof}

We are now in a position to prove the main result of this section:

\begin{prop}[Almost faster-than-polynomial $L^{\infty}$ convergence rate]\label{lemma-fast-C-0}
  Under the same assumptions of Lemma \ref{lemma-fast-L-2} and for each $k\in\N,$ and $r>0$, there exists $C(n,k)>0$  and $ V\left(n,k,r\right)>0$ where $V(n,k,\cdot)$  is non-decreasing in $r,$ such that if $\B(p,\sqrt{t})\times (2^{-1}t,2t)\subset \B(0, 2^{-k}R)\times (0,T)$, then:
  \begin{equation}\label{c0-bd-prop-pol}
 |\ti g_1(t) -\ti g_2(t)|^2_{\de}\leq \tau_0 \cdot R^n t^{-\frac n 2} \cdot V \left(n,k,\tfrac{t}{R^2}\right) +  \Big(\frac{t}{R^2}\Big)^{\frac{k-n}{2}} \cdot  C(n,k),\quad \text{on $\B(p,\sqrt{ t}/2)$.}
\end{equation}
In particular, if $k\in\mathbb{N}$, $j\in\mathbb{N}\setminus \{0\},$ and $r>0$,  there exists $ C(n,k,j)>0$ and
$V\left(n,k,j,r \right)$ which  is non-decreasing in $r>0,$ 
such that if $\B(p,\sqrt{t})\times (2^{-1}t,2t)\subset \B(0,2^{-k}R)\times   (0,T)$, then on a smaller ball $\B(p,\sqrt{t}/4)$,
 \begin{equation*}
t^{\frac{j}{2}}|\D^j\left(\ti g_1(t) -\ti g_2(t)\right)|_{\de}\leq   \left(R^{\frac n 2}t^{-\frac{n}{4}} V\left(n,k,j,\tfrac{t}{R^2}\right)\sqrt{\tau_0}+C(n,k,j)\left(\tfrac{t}{R^2}\right)^{\frac{k-n}{4}}\right)^{1-\frac{1}{\max\{j,2\}}}.
\end{equation*}
\end{prop}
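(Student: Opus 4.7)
My plan is to upgrade the $L^2$-in-space bound of Lemma \ref{lemma-fast-L-2} to an $L^\infty$ bound by a parabolic mean value inequality for the subsolution $v = |h|^2\bigl(1+\lambda(|\ti g_1 - \de|^2 + |\ti g_2 - \de|^2)\bigr)$ introduced in the proof of that lemma (where $h=\ti g_1-\ti g_2$), and then to pass from the $C^0$ bound to $C^l$ bounds by interpolating against the a priori derivative bounds on each $\ti g_i$ supplied by Theorem \ref{RicciDeTurck}. Concretely, the computation leading to \eqref{ineq-v-lovely} shows that $v$ satisfies a differential inequality of the form $\partial_t v \leq \ti h^{ab}\partial_a\partial_b v + \text{(lower-order good terms)}$, whose leading operator is uniformly elliptic since both metrics are $\ep(n)$-close to $\de$. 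This puts us in a position to apply a parabolic mean value inequality: on the parabolic cylinder $Q = \B(p,\sqrt{t})\times(t/2,t)$, which lies inside $\B(0,2^{-k}R)\times(0,T)$ by the hypothesis of the proposition,
\begin{equation*}
\sup_{\B(p,\sqrt{t}/2)} v(t,\cdot)\leq \frac{C(n)}{t^{(n+2)/2}}\int_Q v\,dx\,ds.
\end{equation*}
I would then estimate the space–time integral slice by slice using Lemma \ref{lemma-fast-L-2} and the monotonicity of $V(n,k,\cdot)$, absorb a factor of $(2^{-k}R)^n$, and apply the elementary identity $R^n t^{-n/2}(t/R^2)^{k/2}=(t/R^2)^{(k-n)/2}$, together with the comparison $v\simeq |h|^2$, to arrive at \eqref{c0-bd-prop-pol}.

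For the $C^l$ estimate, the relevant inputs are the a priori bounds $|\D^m\ti g_i(t)|\leq C(m,n) t^{-m/2}$ built into Theorem \ref{RicciDeTurck} via \eqref{metricin2}, which yield in particular $t^{m/2}|\D^m h(t)|\leq C(m,n)$ on $\B(p,\sqrt{t}/2)$. A standard Euclidean interpolation inequality on a ball of radius $\sqrt{t}/2$, of the form
\begin{equation*}
t^{l/2}\|\D^l h\|_{L^\infty(\B(p,\sqrt{t}/4))}\leq C(n,l,m)\,\|h\|_{L^\infty(\B(p,\sqrt{t}/2))}^{\,1-l/m}\bigl(t^{m/2}\|\D^m h\|_{L^\infty(\B(p,\sqrt{t}/2))}\bigr)^{l/m},
\end{equation*}
applied for any fixed integer $m>l$, then combines with the $C^0$ bound from Step 1 (after taking square roots) to yield the stated estimate: the constant $C(n,m)$ coming from the derivative bound absorbs into the final $C(n,k)$, while the exponent $1-l/m$ matches the $1-j/m$ appearing in the statement.

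The main obstacle is the legitimacy of the Moser-type mean value inequality in Step 1, since the coefficients of the parabolic equation for $v$ involve derivatives of $\ti g_i$ that blow up at rate $t^{-1/2}$ (or worse) as $t \downto 0$, so a black-box quotation of the classical Moser theorem for bounded-coefficient equations is not directly available. My plan to circumvent this is to iterate the energy inequality \eqref{gal-est-v} by hand on a nested sequence of parabolic cylinders bounded away from $t = 0$, in the same spirit as the induction already carried out in Lemma \ref{lemma-fast-L-2}: the singular terms either appear with favorable sign on the left-hand side of \eqref{gal-est-v} or can be absorbed via Young's inequality and the smallness of $|h|^2$, so that a De Giorgi/Moser iteration for $v$ can be performed directly. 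The gain factor $(t/R^2)^{(k-n)/2}$ in the final bound then emerges precisely from balancing this iteration against the spatial $L^2$ decay already furnished by Lemma \ref{lemma-fast-L-2}.
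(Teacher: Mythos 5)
Your proposal is correct and follows essentially the same route as the paper: a local parabolic mean value inequality (Moser/De Giorgi iteration) for a subsolution built from $|h|^2$ on dyadic-in-time cylinders, fed with the $L^2$ input of Lemma \ref{lemma-fast-L-2}, followed by interpolation against the Bernstein--Shi derivative bounds $|\D^m h|\leq C(n,m)t^{-m/2}$ for the $C^l$ claim. The only cosmetic difference is that the paper disposes of the singular zeroth-order term $\tfrac{C}{t}|h|^2$ by passing to the weighted quantity $t^{-C}|h|^2$, which satisfies a clean divergence-form differential inequality to which the standard local maximum principle applies directly, whereas you keep the term and tame it on the cylinder $(t/2,t)$ where it contributes only a bounded factor --- the two devices are equivalent.
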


\begin{rmk}
The estimates on the covariant derivatives of the difference of two solutions to $\de$-Ricci-DeTurck flow satisfying the setting of Lemma \ref{lemma-fast-L-2} obtained in Proposition \ref{lemma-fast-C-0} are not sharp but they will be sufficient for the proof of Theorem \ref{thm:main.1}. 
\end{rmk}
\begin{proof}[Proof of Proposition \ref{lemma-fast-C-0}]
With the same notations as those of the proof of Lemma \ref{lemma-fast-L-2}, and according to the proof of \cite[Lemma $6.1$]{Der-Sch-Sim}, recall that the function $|h|^2$ satisfies the following differential inequality:
\begin{equation*}
\begin{split}
\partt |h|^2 &\leq \tilde{h}^{ab}\partial_a \partial_b |h|^2 - \frac{2}{1+\ep} |\D h|^2 + h * \hat{h} *\D^2 (\ti g_1+\ti g_2)\\
&\ \ \ + h*\hat{h}* \ti g_1^{-1} * \D \ti g_1*\D \ti g_1 + h*\ti g_2^{-1}* \hat{h}* \D\ti g_1*\D \ti g_1 \\
&\ \ \ + h*\ti g_2^{-1}* \ti g_2^{-1} *\D h *\D \ti g_1 + h* \ti g_2^{-1}* \ti g_2^{-1} * \D \ti g_2 *\D h\, .
\end{split}
\end{equation*}
In particular, since there exists $C>0$ such that $t|\D\ti g_i|^2+t|\D^2\ti g_i|\leq C$, $i=1,2$, for $t\in(0,T)$, one gets:
\begin{equation}
\begin{split}\label{diff-inequ-norm-h-prelim}
\partt |h|^2 &\leq \tilde{h}^{ab}\partial_a \partial_b |h|^2- \frac{2}{1+\ep} |\D h|^2+\frac{C}{t}|h|^2+\frac{C}{\sqrt{t}}|h||\D h|\\
&\leq \tilde{h}^{ab}\partial_a \partial_b |h|^2- |\D h|^2+\frac{C}{t}|h|^2,
\end{split}
\end{equation}
where $C$ is a positive constant depending on $n$ that may vary from line to line. Here we have used Young's inequality in the second line $\frac{C}{\sqrt{t}}|h||\D h|\leq \frac{1-\varepsilon}{1+\varepsilon}|Dh|^2+\frac{C'}{t}|h|^2$ to absorb the gradient term $\frac{1-\varepsilon}{1+\varepsilon}|\D h|^2$.

In particular, based on the definition of the coefficients $\tilde{h}^{ab}$ in terms of the two solutions, given at the beginning of the proof of Lemma \ref{lemma-fast-L-2},
\begin{equation*}
\begin{split}\label{diff-inequ-norm-h-bis}
\partt |h|^2 &\leq \partial_a\left(\tilde{h}^{ab} \partial_b |h|^2\right)-\partial_a\tilde{h}^{ab}\partial_b |h|^2-|\D h|^2+\frac{C}{t}|h|^2\\
&\leq\partial_a\left(\tilde{h}^{ab} \partial_b |h|^2\right)+C|h||\D h|(|Dg_1| + |Dg_2|) -|\D h|^2+\frac{C}{t}|h|^2\\
&\leq\partial_a\left(\tilde{h}^{ab} \partial_b |h|^2\right)-\frac{1}{2}|\D h|^2+\frac{C}{t}|h|^2,\quad \text{on $\B(0,R)\times(0,T)$.}
\end{split}
\end{equation*}

 As a first conclusion, there exists a positive constant $C$ such that the function $t^{-C}|h|^2$ satisfies:
\begin{equation}
\begin{split}\label{diff-inequ-norm-h}
\partt \left(t^{-C}|h|^2\right) &\leq \partial_a\big(\tilde{h}^{ab} \partial_b\big( t^{-C}|h|^2\big)\big).
\end{split}
\end{equation}

Choose $k>2C+n$ and perform a local Nash-Moser iteration on each ball $\B(p,\sqrt{t})\times\left(t,2t\right)\subset \B(0,2^{-k}R)\times(0,T)$ to get for each $\theta\in(0,1)$,  
\begin{equation}\label{loc-nash-moser}
\sup_{\B(p,\sqrt{\theta t})\times\left(t(1+\theta),2t\right)}t^{-C}|h|^2\leq C(n,\theta)\, \dashint_t^{2t}\dashint_{\B(p,\sqrt{t})}\left(s^{-C}|h|^2\right)\,dxds.
\end{equation}
 See for instance \cite[Theorem $6.17$]{Boo-Lieberman} with `$k=0$' for a proof.

Now apply Lemma \ref{lemma-fast-L-2} so that if $t<2^{-2k}R^2$, the previous inequality \eqref{loc-nash-moser} leads to the pointwise bound:
\begin{equation*}
\begin{split}
\sup_{\B(p,\sqrt{\theta t})\times\left(t(1+\theta),2t\right)}t^{-C}|h|^2&\leq C(n,k,\theta)\big(\tfrac{R^2}{t}\big)^{\frac{n}{2}}\dashint_t^{2t}s^{-C}\dashint_{\B(0,2^{-k} R)}|h(s)|^2\,ds\\
&\leq C(n,k,\theta)\big(\tfrac{R^2}{t}\big)^{\frac{n}{2}}\dashint_t^{2t}\left(V\left(n,k,\tfrac{s}{R^2}\right)\tau_0+\left(\tfrac{s}{R^2}\right)^{\frac{k}{2}}\right)\cdot s^{-C}\,ds\\
&\leq C(n,k,\theta) R^nt^{-(C + \frac{n}{2})}  \dashint_t^{2t}\left(V\left(n,k,\tfrac{s}{R^2}\right)\tau_0+\left(\tfrac{s}{R^2}\right)^{\frac{k}{2}}\right)\cdot  \,ds\\
&\leq  C(n,k,\theta) R^nt^{-(C + \frac{n}{2})} \Big( V\left(n,k,\tfrac{t}{R^2}\right)\tau_0+\left(\tfrac{t}{R^2}\right)^{\frac{k}{2}}\Big),\\
& =  C(n,k,\theta) R^nt^{-(C + \frac{n}{2})}   V\left(n,k,\tfrac{t}{R^2}\right)\tau_0 +  C(n,k,\theta)t^{-C}\left(\tfrac{t}{R^2}\right)^{\frac{k-n}{2}}
\end{split}
\end{equation*}
for $t\in(0, 2^{-2k}R^2).$

Here we have used that $V\left(n,k,r\right)$ is non-decreasing in $r$. 
This estimate in turn implies the desired pointwise convergence rate, i.e.
\begin{equation*}
\sup_{\B(p,\sqrt{\theta t})\times\left(t(1+\theta),2t\right)}|h|^2\leq    C(n,k,\theta) R^nt^{-\frac{n}{2}}   V\left(n,k,\tfrac{t}{R^2}\right)\tau_0 +  C(n,k,\theta)\left(\tfrac{t}{R^2}\right)^{\frac{k-n}{2}} 
\end{equation*}
as long as $\B(p,\sqrt{t})\times(t,2t)\subset \B(0,2^{-k} R)\times(0,T).$

In order to prove the bounds on the derivatives, we recall the following standard local interpolation inequalities on Euclidean space:
\begin{equation}
\sup_{\R^n}|\D^ju|\leq C(n,j,m)\sup_{\R^n}|u|^{1-\frac{j}{m}}\cdot \sup_{\R^n}|\D^mu|^{\frac{j}{m}},\label{int-inequ-easy}
\end{equation}
where $u$ is any smooth function on $\R^n$ with compact support and $0\leq j\leq m$. See for example \cite[Theorem $3.70$]{Aub-Boo}, or inductively apply Lemma A.5 of \cite{Sch-Sim-Sim-Hyp} for a proof.
Now, by interior Bernstein-Shi's estimates on solutions to $\delta$-Ricci DeTurck flow as stated in \cite[Corollary 5.4]{Sch-Sim-Sim-Euc} together with \eqref{c0-bd-prop-pol}, the previous interpolation inequalities applied to the coordinates of the tensor $\eta\cdot h$ where $\eta$ is a smooth cut-off function such that $\eta\equiv 1$ on $\B(p,\sqrt{\theta' \cdot t})$ with support in $\B(p,\sqrt{\theta\cdot t})$ for $0<\theta'<\theta<1$ and such that $t^\frac{k}{2}|\D^k\eta|$ is bounded on $\R^n$ for all $k\geq 0$, imply for $0\leq j\leq m $:
 \begin{equation}
 \begin{split}
&\sup_{\B(p,\sqrt{\theta' t})}|\D^j h((3+\theta')t/2)|\leq\sup_{\R^n}|\D^j(\eta \cdot h((3+\theta')t/2))|\leq \\
&C(n,j,m)\sup_{\R^n}|\eta \cdot h((3+\theta')t/2)|^{1-\frac{j}{m}}\cdot \sup_{\R^n}|\D^m(\eta\cdot h((3+\theta')t/2))|^{\frac{j}{m}}\\
&\leq C(n,j,m,\theta)\sup_{\B(p,\sqrt{\theta t})}|h((3+\theta')t/2)|^{1-\frac{j}{m}}\left(\frac{C(n,m)}{t^{\frac{m}{2}}}\right)^{{\frac{j}{m}}}\\
&=  C(n,j,m,\theta)\sup_{\B(p,\sqrt{\theta t})}|h((3+\theta')t/2)|^{1-\frac{j}{m}}\left(\frac{C(n,m)}{t^{\frac{m}{2}}}\right)^{{\frac{j}{m}}}\\
&\leq C(n,j,m,\theta)\cdot t^{-\frac{j}{2}}\cdot \left(R^{\frac n 2}t^{-\frac{n}{4}} V\left(n,k,\theta,\tfrac{t}{R^2}\right)\sqrt{\tau_0}+C(n,k,\theta)\left(\tfrac{t}{R^2}\right)^{\frac{k-n}{4}}\right)^{1-\frac{j}{m}},
\end{split}
\end{equation}
where $C(n,j,m,\theta)$ is  a positive constant that may vary from line to line. This ends the proof of the desired estimate once one fixes $j\geq 1$ and let $m$ be $j^2$ if $j\geq 2$ and $m=2$ if $j=1$, and $\theta' = \frac{1}{4} < \theta = \frac{1}{2}<1$.
 \end{proof}

As a first consequence of Proposition \ref{lemma-fast-C-0}, we derive the following result which is an intermediate but essential step towards the proof of Theorem \ref{thm:main.1}.

\begin{cor}\label{cor-pol-main-thm-version}

For all $n\in \N$ and $\be_0\in (0,1)$ there exists $\ep_0=\ep_0(n,\be_0)>0$, depending only on $\be_0$ and $n$,  such that the following holds. 

Let $(M_i^n,g_i(t))_{t\in (0,T)}$, $i=1,2$, be smooth solutions to Ricci flow (not necessarily with complete time slices)  both  satisfying \eqref{CurvatureCond} and \eqref{DistCond}, such that  the locally well defined metrics at time zero agree (up to an isometry),  that is  for arbitrary $p\in M_1,$ 
 $ \lim_{t\downto 0}d(g_1(t)) = d_{0,1}$ on $U_{p,1}$, and 
 $\lim_{t\downto 0}d(g_2(t)) = d_{0,2},$  on $U_{\psi_0(p),2}$  
for  an  isometry    $\psi_0:(U_{p,1},d_{0,1}) \to (U_{\hat p,2} =\psi_0 (U_{p,1}), d_{0,2}),$ $\hat p:= \psi_0(p),$ that is
 $\psi_0:U_{p,1} \to U_{\hat p,2} = \psi_0 (U_{ p,1})$ is a homeomorphism with 
$ \psi_0^*(d_{0,2}) = d_{0,1}.$  

We assume \eqref{ReifenbergCond} holds true for the locally defined metric $d_0= {d_{0,1}},$   that    $p\in M_1,$
$x_0 \in U_{p,1},$   and that $\eqref{CoordCond}$  holds for some $\ti R> 2R >0,$ for   the locally defined metric $d_0= {d_{0,1}}$ on $U_p= U_{p,1}$ (and hence for $d_0= {d_{0,1}}$  on  $U_p= U_{p,2}$). 

Then there exists an $R_0\in(0,1)$ and $T_0>0$  and  solutions $(\hat{g}_1(t))_{t\in(0,T_0)}$ and $(\ti g_2(t))_{t\in(0,T_0)}$ to $\de$-Ricci-DeTurck flow defined on $\B(0,R_0)\times(0,T_0)$ satisfying the following:
 \begin{enumerate}
 \item \label{close-sec-5}The metrics $\hat{g}_1(t)$ and $\ti g_2(t)$ are $\beta_0$-close to the $\delta$-metric for $t\in(0,T_0)$.
  \\[-2ex]
 \item  \label{pol-close-sec-5} For each $j\geq 1$ and $k\in\N$, there exist  $C_k>0$ and $C_{j,k} >0$ and $T_k >0$  such that if $t\in(0,T_k]$:
  \begin{equation*}
  \begin{split}
  |\hat g_1(t) -\ti g_2(t)|_{\de}&\leq C_{k} \,t^{k},\quad \text{on $\B(0,\sqrt{ T_k})$}, \\
t^{\frac{j}{2}}|D^j\left(\hat g_1(t) -\ti g_2(t)\right)|_{\de}&\leq C_{j,k} \,t^{k\left(1-\frac{1}{\max\{2,j\}}\right)},\quad \text{on $\B(0,\sqrt{ T_k})$.}
\end{split} 
\end{equation*}
\item \label{part-3-sec-5}   One has $ \hat{g}_1(t)= (\hat F_1(t))_{\ast}g_1(t)$ and $\ti g_2(t)=(F_2(t))_{\ast}g_2(t),$ where $(\hat F_1(t))_{t\in(0,T_0)}$ and $(F_2(t))_{t\in(0,T_0)}$ are smooth families of bi-Lipschitz maps on $B_{d_{0,1}}(x_0, \frac{3}{2}R_0)$ respectively $B_{d_{0,2}}(\psi_0(x_0), \frac{3}{2}R_0)$ that satisfy the following:
\begin{enumerate}
\item The family of maps $(\hat F_1(t))_{t\in(0,T_0)}$ (respectively $(F_2(t))_{t\in(0,T_0)}$) is a solution to the Ricci-harmonic map flow with respect to the Ricci flow solution $(g_1(t))_{t\in(0,T_0)}$ (respectively $(g_2(t))_{t\in(0,T_0)}$).
\item \label{est-F-maps}
$  | \hat F_1(t)- D_0| \leq C_0\sqrt{t},\quad t\in(0,T_0)$ on $B_{d_{0,1}}(x_0,\frac{3}{2}R_0) $\\
and $  | F_2(t)- D_0 \,\of\, (\psi_0)^{-1}| \leq C_0\sqrt{t},\quad t\in(0,T_0)$ on $B_{d_{0,2}}((\psi_0)(x_0),\frac{3}{2}R_0),$ where here $D_0$ are the distance coordinates on $B_{d_{0,1}}(x_0,\frac{3}{2}R_0) $ coming from \eqref{CoordCond}.
\item \label{est-dist-rate} The distances ${\hat  d}_{1}(t)\coloneqq d(\hat  g_{1}(t))$ and $\tilde{d}_{2}(t)\coloneqq 
d(\tilde{g}_2(t))$ converge, uniformly to the same distance $(D_0)_{\ast}d_{0,1}$ on $\B(0,R_0)$ as $t$ approaches $0$.
\end{enumerate}
\end{enumerate}
\end{cor}
\begin{rmk}
We have  denoted the solution  to the Ricci-harmonic map heat flow from $(M_1^n,g_1(t))_{t\in (0,T_0)}$ to $(\R^n,\de)$ appearing in the statement of  this theorem  by  $\hat{F_1}$ in order to make it clear that is not necessarily the same solution to the one considered in 
Proposition \ref{prop-dec-2-sol-HMF} (which comes from Theorem \ref{RicciDeTurck}). The solution $\hat F_1$ appearing here is  obtained by modifying  the  
$F_1$ from Theorem \ref{RicciDeTurck} with the help of Corollary \ref{cor-mod-DTRF}, as is explained in the proof below. 
\end{rmk}

\begin{proof}
  Let $p,x_0 \in M_1$, $\hat p = \psi_0(p)$, $\hat x_0 = \psi_0(x_0) \in M_2$ be as in the statement of the theorem.
By scaling  once, we may assume that $R(x_0,n) = R(\hat x_0,n) >200$.  
We prove the estimates in this setting; scaling back to the original setting implies the  estimates for the original solution. 

Let us start by proving part \eqref{part-3-sec-5}. To this end, let $(t_k)_{k\in \N}$ be an arbitrary sequence of positive times with $t_k \downto 0$ for $k\to \infty$. According to Corollary \ref{cor-mod-DTRF}, there exists a smooth family of $(1+\be_0)$-bi-Lipschitz maps $( \hat F^k_1(t))_{t\in[t_k,\hat{S})}$ on $B_{d_{0,1}}(x_0, \frac{3}{2}R_0)$ for some $\be_0\in(0,1)$ solving the Ricci-harmonic map heat flow equation   such that  $|\hat{F}_1^k(t)- D_0|\leq c(n)\sqrt{t}$ on $B_{d_0}\big(x_0,\tfrac{3}{2}\big)$, for $t\in[t_k,\hat{S}),$
and, 
$\lim_{t_k\rightarrow 0^+}|\hat{ g}_1^k(t_k)-\ti g_2(t_k)|_{\de} =0,$  on $\B(0,R_0),$ 
for $\hat{g}_1^k(t_k) = (\hat F_1^k)_*(g_1)(t_k),$
in view of 
\eqref{est-app-sequ}.

 The Arz\'ela-Ascoli theorem guarantees the existence of a subsequence of $( \hat F^k_1(t))_{t\in[t_k,\hat{S})}$ that converges locally uniformly on $B_{d_{0,1}}(x_0, \frac{3}{2}R_0)\times[t_k,\hat{S})$ to a continuous family of $(1+\be_0)$-bi-Lipschitz maps $( \hat F_1(t))_{t\in[0,\hat{S})}$.   For the rest of the proof we fix this subsequence. By interior parabolic regularity (see for instance \eqref{int-est-Z-maps} in the proof of Theorem \ref{RicciDeTurck} and the proof of \cite[Theorem 3.8]{Der-Sch-Sim}), the convergence takes place in the $C^{\infty}_{loc}$ topology (for positive times) and this ensures that the family of maps $(\hat F_1(t))_{t\in(0,\hat{S})}$ is a solution to the Ricci-harmonic map heat flow equation with respect to the Ricci flow solution $(g_1(t))_{t\in(0,\hat{S})}$.   Passing to the limit in estimate  \eqref{est-app-sequ} of  Corollary \ref{cor-mod-DTRF} gives us the second estimate \eqref{est-F-maps} for $\hat F_1(t)$. 

Now, let  $( F_2(t))_{t\in(0,\hat{S})}$ on $B_{d_{0,2}}(\psi_0(x_0), \frac{3}{2}R_0)$ be the smooth family of bi-Lipschitz maps solving the Ricci-harmonic map heat flow equation  with initial value the map $ D_0\, \of\,  (\psi_0)^{-1} $ provided by Theorem \ref{RicciDeTurck}. Estimate \eqref{est-F-maps} for $ F_2(t)$ follows from \eqref{cauchy-pb-init}. 

In order to prove \eqref{est-dist-rate}, let us notice that both distances $\hat{d}_1(t)\coloneqq d(\hat{g}_1(t))$ and $\tilde{d}_2(t)\coloneqq d(\tilde{g}_2(t))$ converge to the same distance $(D_0)_{\ast}d_0$ as $t$ approaches $0$ thanks to condition \eqref{DistCond} and estimates \eqref{est-F-maps}.

We are now in a position to prove part \eqref{close-sec-5} and \eqref{pol-close-sec-5}.

According to Corollary \ref{cor-mod-DTRF}, the family of metrics $(\hat{g}_1^k(t)=(\hat{F}_1^k(t))_{\ast}g_1(t))_{t\in[t_k,\hat{S})}$ is a solution to $\de$-Ricci-DeTurck flow associated to $(g_1(t))_{t\in[t_k,\hat{S})}$ which is $(1\pm \beta_0)$ close to the $\delta$ metric on $\B(0,R_0)$. Moreover, if $\ti g_2(t)\coloneqq (F_2(t))_{\ast}g_2(t)$, $t\in(0,T)$, is the associated solution to $(g_2(t))_{t\in(0,\hat{S})}$ to $\de$-Ricci-DeTurck flow provided by Theorem \ref{RicciDeTurck}, 
\begin{equation}
\lim_{t_k\rightarrow 0^+}|\hat{ g}_1^k(t_k)-\ti g_2(t_k)|_{\de}=0.
\end{equation}
Denote an upper bound on $|\hat{ g}_1^k(t_k)-\ti g_2(t_k)|_{\de}$ on $\B(0,R_0)$ by $\varepsilon_k$. 
Now, let us apply Proposition \ref{lemma-fast-C-0} to $(\hat{g}_1^k(\tau+t_k))_{\tau\in[0,\hat{S}/2)}$ and $(\ti g_2(\tau+t_k))_{\tau\in[0,\hat{S}/2)}$ for $t_k$ sufficiently small to get for each $l\geq 0$ and $R=1$,
  \begin{equation}
  \begin{split}
|\hat g^k_1(t) -\ti g_2(t)|^2_{\de}&\leq V\left(n,l, \hat{S}\right)\varepsilon_k (t-t_k)^{-\frac n 2} +C(n,l)\left(t-t_k\right)^{l},\quad \text{on $\B\left(p,\tfrac{\sqrt{t-t_k}}{2}\right)$,}\label{intermed-est-conv-rate}
\end{split}
\end{equation}
in view of 
\eqref{est-app-sequ}, as we pointed out above, whenever $\B(p,\sqrt{t-t_k})\times (2^{-1}(t-t_k),2(t-t_k))\subset \B_{2^{-2l-n}}(0)\times (0,\hat{S})$. Interior Bernstein-Shi's estimates for solutions to the $\de$-Ricci-DeTurck flow $(1 \pm \beta_0)$ close to $\delta$ metric as stated in \cite[Corollary 5.4]{Sch-Sim-Sim-Euc} let us pass to the limit as $t_k$ tends to $0^+$, i.e. for each $l\geq 1$, there exists $\hat{T}_l>0$ such that the solution 
$({\hat{g}_1}^k(t))_{t\in(t_k,\hat{T}_l)}$ converges in $C^{\infty}_{loc}(\B_{2^{-2l-n-1}}(0)\times(0,\hat{T}_l))$ to a solution  $\check g_1(t)$ to $\de$-Ricci-DeTurck flow 
which is $(1\pm \beta_0)$ close to the $\delta$ metric and which satisfies thanks to \eqref{intermed-est-conv-rate} the expected $C^0$-closeness to the solution $\ti g_2(t)$ as stated in  (\ref{pol-close-sec-5})  of Corollary \ref{cor-pol-main-thm-version} for $j=0$. The cases $j\geq 1$ follows similarly. Finally,  by construction,  
$\check g_1(t) = \lim_{k\to \infty} ({\hat{F}_1}^k)_{*} g_1(t)  = (\hat{F}_1)_{*}g_1(t) =: \hat{g}_1(t),$  which completes the proof of   \eqref{pol-close-sec-5} and the theorem.
\end{proof}

\section{Exponential convergence rate}\label{sec-exp-conv}

Before stating the main result of this section, we take the opportunity of explaining the behaviour at $t=0$ of a solution to the heat equation on Euclidean space starting from an initial data vanishing on a ball.
Although  this result (and the short proof thereof) will not be used in the rest of the paper, we  present it here,
as it indicates   what one may expect in more general cases.
 More precisely,
\begin{prop}\label{prop-baby-conv-rate}
Let $u_0\in L^1(\R^n)$ with compact support. Assume $u_0$ vanishes on $\B(0,R)$ fro some $R>0$. Then the \textbf{standard} solution $u$ to the heat equation with initial data $u_0$, obtained by convolution with the Euclidean heat kernel, satisfies:
\begin{equation*}
|u(x,t)|\leq (4\pi t)^{-\frac{n}{2}} \exp\left(-\frac{(R-|x|)^2}{4t}\right)\|u_0\|_{L^1(\R^n)},\quad |x|\leq R,\quad t>0.
\end{equation*}
\end{prop}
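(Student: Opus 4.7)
The plan is to write $u$ explicitly via convolution with the Euclidean heat kernel and exploit the fact that, because $u_0$ vanishes on $\B(0,R)$, the effective domain of integration sits in $\{|y|\geq R\}$, which is at distance at least $R-|x|$ from any point $x$ with $|x|\leq R$.

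More precisely, I would first recall that the standard solution with initial data $u_0 \in L^1(\R^n)$ is given by
\begin{equation*}
u(x,t) = \int_{\R^n} (4\pi t)^{-n/2} \exp\!\left(-\frac{|x-y|^2}{4t}\right) u_0(y)\, dy,
\end{equation*}
and that the assumption $u_0 \equiv 0$ on $\B(0,R)$ allows us to restrict this integral to the region $\{|y|\geq R\}$. Then for any $x$ with $|x|\leq R$ and any $y$ with $|y|\geq R$, the reverse triangle inequality gives $|x-y|\geq |y|-|x|\geq R-|x|$, so that the Gaussian factor is pointwise dominated by $\exp(-(R-|x|)^2/(4t))$.

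The final step is to pull this uniform upper bound out of the integral and estimate what remains by $\|u_0\|_{L^1(\R^n)}$, yielding precisely
\begin{equation*}
|u(x,t)| \leq (4\pi t)^{-n/2} \exp\!\left(-\frac{(R-|x|)^2}{4t}\right) \|u_0\|_{L^1(\R^n)}.
\end{equation*}
There is essentially no obstacle here; the proposition is intended as a sanity check illustrating that the exponential-in-$1/t$ decay rate appearing in Theorem \ref{thm:main.1} is the natural scale one should expect, since two initial data that agree on a ball produce solutions whose difference is a heat extension of a function vanishing on that ball.
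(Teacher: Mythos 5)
Your argument is correct and coincides with the paper's proof: both write $u(\cdot,t)=K_t\ast u_0$, restrict the integral to $\{|y|\geq R\}$ using the vanishing of $u_0$ on $\B(0,R)$, apply the reverse triangle inequality to bound the Gaussian factor by $\exp(-(R-|x|)^2/(4t))$, and pull it out of the integral. Nothing is missing.
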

\begin{proof}
The proof is a simple consequence of the fact that $u(\cdot ,t)=K_t\ast u_0$ on $\R^n$ for $t>0$ where $K_t$ is the Euclidean heat kernel defined by $K_t(x):=(4\pi t)^{-\frac{n}{2}}\exp\left(-\frac{|x|^2}{4t}\right)$ for $t>0$ and $x\in \R^n$. Indeed, if $|x|\leq R$,
\begin{equation*}
\begin{split}
|u(x,t)|&=|K_t\ast u_0(x)|\leq \int_{\R^n}K_t(x-y)|u_0(y)|\,dy\\
&\leq\int_{\R^n\setminus \B(0,R)}(4\pi t)^{-\frac{n}{2}} \exp\left(-\frac{|x-y|^2}{4t}\right)|u_0(y)|\,dy\\
&\leq (4\pi t)^{-\frac{n}{2}}\int_{\R^n\setminus \B(0,R)}\exp\left(-\frac{(R-|x|)^2}{4t}\right)|u_0(y)|\,dy\\
&\leq(4\pi t)^{-\frac{n}{2}} \exp\left(-\frac{(R-|x|)^2}{4t}\right)\|u_0\|_{L^1(\R^n)},\quad  t>0.
\end{split}
\end{equation*}
Here we have used the assumption on the support of $u_0$ in the second line and the triangular inequality in the third line.
\end{proof}

The following theorem can be interpreted as a non-linear version of Proposition \ref{prop-baby-conv-rate} and as such, it proves a sharp qualitative convergence rate.
 \begin{thm}\label{dream-conv-rate}
 Let $n\geq 2$ be an integer. Then there exists $\varepsilon(n)>0$ such that the following holds true.
 Let $R>0$, $T>0$ and $(\ti g_i(t))_{t\in(0,T)}$, $i=1,2$, be two smooth solutions to $\de$-Ricci-DeTurck flow on $\B(0,R)\times (0,T)$ which are $\ep(n)$-close to the Euclidean metric, 
 \begin{equation*}
 \begin{split}\label{basic-ass-conv-rate.2}
 (1+\ep(n))^{-1} \delta \leq \ti g_i(t) \leq (1+\ep(n))\delta \quad\text{on $\B(0,R)\times (0,T)$},
 \end{split}
  \end{equation*}
for $i=1,2$.  Assume furthermore that $\lim_{t\rightarrow 0^+}\sup_{\B(0,R)}|\ti g_2(t)-\ti g_1(t)|_{\delta}=0$.
  
  Then there exist positive constants $T_0$, $C_0$ and $R_0$ depending only on $n,R,T$ such that
  \begin{equation*}
|\ti g_1(t) -\ti g_2(t)|_{\de}\leq  \exp\left(-\frac{C_0}{t}\right),\quad \text{on $\B(0,R_0)\times (0,T_0)$.}
\end{equation*}
 \end{thm}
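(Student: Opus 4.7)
The plan is to upgrade the polynomial convergence rate of Proposition \ref{lemma-fast-C-0} to an exponential rate by comparing $u:=|h|_\delta^2$, where $h = \tilde g_1 - \tilde g_2$, with a Gaussian barrier for a linear parabolic equation on a Euclidean ball. First, by the hypothesis $\lim_{t\downto 0}\sup_{\B(0,R)}|h(t)|_\delta = 0$, we apply Proposition \ref{lemma-fast-C-0} to the time-shifted pair $\tilde g_i(\cdot + t_0)$ on $\B(0,R)\times (0,T-t_0)$ and let $t_0\downto 0^+$: the term $\tau_0 R^n t^{-n/2} V$ vanishes in the limit, and we obtain polynomial decay of arbitrary order, i.e.~for every $k\in\N$ there exist $R_k\in(0,R)$, $T_k>0$ and $C_k>0$ such that
\begin{equation*}
\sup_{\B(0,R_k)}|h(\cdot,t)|_\delta^2 \leq C_k\, t^k,\quad t\in(0,T_k].
\end{equation*}

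Next, recall from the proof of Proposition \ref{lemma-fast-C-0} that $u$ satisfies the pointwise parabolic inequality $\partial_t u \leq \tilde h^{ab}\partial_a\partial_b u + (C_1/t)\,u$, where $\tilde h^{ab} = \tfrac{1}{2}(\tilde g_1^{ab}+\tilde g_2^{ab})$ is $\varepsilon(n)$-close to $\delta^{ab}$ (so uniformly elliptic with ellipticity constants near $1$). Setting $w(x,t) := t^{-C_1} u(x,t)$, a direct computation gives $\partial_t w \leq \tilde h^{ab}\partial_a\partial_b w$, so $w$ is a subsolution of a uniformly parabolic linear equation. Combined with the polynomial decay, choosing $k > C_1$ in Step~1 ensures that $w$ is bounded on $\B(0,R_k)\times (0,T_k]$ and that $\sup_{\B(0,R_k)}w(\cdot,t)\to 0$ as $t\downto 0^+$.

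Now fix radii $R_0 < R_1 < R_k$ and appeal to the classical Gaussian upper bound for subsolutions of uniformly parabolic linear equations on a ball with vanishing initial data. Concretely, one constructs a supersolution $V$ of $\partial_t V = \tilde h^{ab}\partial_a\partial_b V$ on $\B(0,R_1)\times (0,T_k)$ modelled on the fundamental solution of the frozen-coefficient heat equation (e.g.~a radial complementary-error-function barrier), chosen to dominate $w$ on the parabolic boundary of $\B(0,R_1)\times (0,T_k)$. The parabolic maximum principle then yields, for a constant $\kappa>0$ depending on the ellipticity of $\tilde h$ (hence on $\varepsilon(n)$),
\begin{equation*}
w(x,t) \leq C\,\sup w \cdot \exp\!\left(-\frac{(R_1-R_0)^2}{\kappa\, t}\right),\quad x\in\B(0,R_0),\ t\in(0,T_k].
\end{equation*}
Undoing the substitution $u = t^{C_1}w$ and absorbing the polynomial prefactor into the exponential for $t$ small, we obtain $|h(x,t)|_\delta^2 \leq \exp(-2C_0/t)$ on $\B(0,R_0)\times (0,T_0)$ for some $C_0>0$ and $T_0>0$; taking the square root gives the desired bound.

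The main obstacle is executing the comparison step with the variable-coefficient operator $\tilde h^{ab}\partial_a\partial_b$: one needs an explicit Gaussian supersolution on the Euclidean ball $\B(0,R_1)$ whose ellipticity constant is uniform in $t\in(0,T_0)$, and one has to justify that the initial condition for $w$ is attained in a sufficiently strong sense (which is precisely where the arbitrary-order polynomial decay from Step~1 is indispensable, as it removes the singular behaviour of $t^{-C_1}$). Controlling the dependence of $\kappa$ on $\varepsilon(n)$ and ensuring that the polynomial prefactor $t^{C_1}$ is dominated by the Gaussian for $t$ small enough are routine but must be tracked carefully.
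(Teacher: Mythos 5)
Your overall strategy is correct, and your Step 1 is exactly what the paper does: it applies Proposition \ref{lemma-fast-C-0} on time intervals $[r_i,T)$ with $r_i\downto 0$, using $\sup_{\B(0,R)}|h(r_i)|_\de\to 0$ to kill the $\tau_0$-term and conclude $|h(t)|_\de\leq C(n,k,R)\,t^k$ for every $k$, together with the accompanying bound on $Dh$. Where you genuinely diverge is the comparison step. The paper keeps the evolution inequality for $u=t^{-C}|h|^2$ in \emph{divergence} form, $\partial_t u\leq\partial_a(\ti{h}^{ab}\partial_b u)$, localizes with a cut-off (producing a source supported in the annulus $\B(0,R)\setminus\B(0,R/2)$, controlled by the polynomial decay of $u$ and $Du$), writes $\psi u$ via Duhamel, and invokes Aronson's Gaussian upper bound for the fundamental solution of a divergence-form operator with merely bounded measurable coefficients (Theorem \ref{lemma-fund-sol-est}); the exponential then comes from $|x-y|\gtrsim R_0$ inside Aronson's kernel. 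You instead use the non-divergence inequality and propose an explicit supersolution plus the maximum principle. This route is viable and more self-contained, but the barrier is the whole content of the argument and is less routine than you suggest: (i) the naive radial barrier $\exp(-(R_1-|x|)^2/(\kappa t))$ is \emph{not} a supersolution of $\partial_t V\geq\ti{h}^{ab}\partial_a\partial_b V$ near the center of the ball, since $\ti{h}^{ab}\partial_a\partial_b(R_1-|x|)\sim -1/|x|$ blows up; one must cap the exponent (e.g.\ replace $R_1-|x|$ by $\min(R_1-|x|,R_1/2)$ and use that a minimum of supersolutions is a supersolution) and verify the differential inequality in the boundary layer where $(R_1-|x|)/t$ is small, which forces $\kappa$ strictly larger than $4$ times the upper ellipticity constant and a smallness restriction on $t$; (ii) the Gaussian barrier cannot dominate $w(\cdot,t_0)$ pointwise at a positive initial time $t_0$ (it is far smaller than $t_0^{k-C_1}$ in the interior), so you must add the constant $\sup_{\B(0,R_1)}w(\cdot,t_0)$ to the barrier and then send $t_0\downto 0$, which is precisely where $k>C_1$ is used. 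Both fixes are classical, so your approach buys an elementary, Aronson-free proof at the cost of an explicit construction that the paper outsources to a cited black box; but as submitted the supersolution is asserted rather than constructed, and that is the one step you still need to carry out.
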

The proof of Theorem \ref{dream-conv-rate} relies on the following crucial result due to Aronson \cite{Aronson-fund-sol}:

\begin{thm}[\cite{Aronson-fund-sol}]\label{lemma-fund-sol-est}
Let $a^{ij}\in L^{\infty}(\R^n\times(0,T))$, $1\leq i,j\leq n$, such that on $\R^n\times(0,T)$,
\begin{equation*}
\lambda^{-1} |\xi|^2\leq a^{ij}(x,t)\xi_i\xi_j\leq \lambda |\xi|^2,\quad \forall \,\xi=(\xi_1,...,\xi_n)\in\R^n,
\end{equation*}
for some positive uniform $\lambda$. Then there exist positive constants $C=C(n,T,\lambda)$ and $\alpha=\alpha(n,T,\lambda)$ such that the fundamental solution of the divergence structure parabolic equation $\partial_tu-\partial_i\left(a^{ij}\partial_ju\right)=0$ on $\R^n\times(0,T)$ satisfies:
\begin{equation}
K(x,t,y,s)\leq C(t-s)^{-\frac{n}{2}}\exp\left\{-\alpha\frac{|x-y|^2}{t-s}\right\},\quad 0\leq s<t,\quad x,y\in\R^n.
\end{equation}
\end{thm}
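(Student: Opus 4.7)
\medskip

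The plan is to prove Aronson's Gaussian upper bound by the classical Nash--Davies perturbation scheme, which proceeds in three stages: an on-diagonal $L^\infty$ estimate for the kernel, a weighted $L^2$ energy estimate, and an optimization of the weight.

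\emph{Stage 1 (on-diagonal bound).} First I would establish the pointwise bound
\begin{equation*}
K(x,t,y,s) \leq C(n,\lambda)\,(t-s)^{-n/2}, \qquad x,y\in\R^n,\ 0\leq s<t\leq T.
\end{equation*}
This is the standard consequence of the Nash--Moser $L^2\to L^\infty$ smoothing estimate for solutions of $\partial_t u = \partial_i(a^{ij}\partial_j u)$ with bounded measurable, uniformly elliptic coefficients. Concretely, if $u$ solves the equation, a Moser iteration (testing with $u^{2p}$ and using the Sobolev inequality together with the uniform ellipticity) gives $\|u(\cdot,t)\|_{L^\infty} \leq C(t-s)^{-n/4}\|u(\cdot,s)\|_{L^2}$; by duality for the divergence-form equation (the adjoint has the same structure since $a^{ij}$ is symmetric) one gets $L^1\to L^2$ smoothing, and composing via the semigroup identity $K(x,t,y,s)=\int K(x,t,z,r)K(z,r,y,s)\,dz$ at $r=(t+s)/2$ yields the full $L^1\to L^\infty$ bound, which is exactly the pointwise estimate on $K$.

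\emph{Stage 2 (weighted energy estimate).} Fix a bounded Lipschitz function $\psi:\R^n\to\R$ with $|\nabla\psi|\leq \gamma$. For a solution $u$ I would compute
\begin{equation*}
\frac{d}{dt}\int_{\R^n} e^{2\psi} u^2 \,dx = -2\int e^{2\psi} a^{ij}\partial_i u\,\partial_j u\,dx - 4\int e^{2\psi} u\, a^{ij}\partial_i u\,\partial_j\psi\,dx,
\end{equation*}
absorb the cross term via Cauchy--Schwarz and uniform ellipticity, and arrive at
\begin{equation*}
\frac{d}{dt}\int e^{2\psi} u^2 \leq C\lambda\gamma^2\int e^{2\psi} u^2,
\end{equation*}
hence $\|e^{\psi}u(\cdot,t)\|_{L^2}\leq e^{C\lambda\gamma^2(t-s)}\|e^{\psi}u(\cdot,s)\|_{L^2}$. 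The identical computation for the adjoint equation gives the dual statement. Applying the Stage 1 smoothing to the conjugated semigroup $u\mapsto e^{-\psi}T_{t-s}(e^{\psi}\cdot)$ (via the semigroup at the midpoint) and combining with the weighted $L^2$ bound yields
\begin{equation*}
e^{\psi(x)-\psi(y)}K(x,t,y,s) \leq C(t-s)^{-n/2}\,e^{C\lambda\gamma^2(t-s)}.
\end{equation*}

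\emph{Stage 3 (optimization).} Now I would specialize $\psi(z)=\gamma\,\eta(|z-y|)$, where $\eta$ is a Lipschitz truncation of the identity with $\eta(|x-y|)=|x-y|$, so that $\psi(x)-\psi(y)=\gamma|x-y|$. This gives
\begin{equation*}
K(x,t,y,s) \leq C(t-s)^{-n/2}\exp\bigl(-\gamma|x-y|+C\lambda\gamma^2(t-s)\bigr),
\end{equation*}
and minimizing the right-hand side in $\gamma>0$ by the choice $\gamma=|x-y|/(2C\lambda(t-s))$ produces the claimed bound with $\alpha=\alpha(n,T,\lambda)$. The main obstacle is the Stage 1 Nash--Moser estimate: one needs the full De Giorgi--Nash--Moser machinery (Sobolev embedding plus iteration) to handle the merely bounded measurable coefficients $a^{ij}$. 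The weighted perturbation argument in Stages 2 and 3 is relatively routine once the on-diagonal bound is in hand.
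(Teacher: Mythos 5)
The paper does not prove this statement: it is cited directly from Aronson's 1967 paper (which establishes the Gaussian bound via a priori comparison estimates and Moser's Harnack inequality). Your proposal is therefore a different route — the Nash--Davies exponential perturbation method — and the overall architecture you outline (on-diagonal ultracontractivity, weighted $L^2$ energy estimate, optimisation of $\gamma$) is a genuine and well-known alternative proof.

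However, there is a real gap in the passage from Stage~2 to Stage~3. You write that you will be ``applying the Stage~1 smoothing to the conjugated semigroup $u\mapsto e^{-\psi}T_{t-s}(e^{\psi}\cdot)$,'' but Stage~1 was established for the original divergence-form operator $\partial_i(a^{ij}\partial_j\cdot)$. The conjugated generator is \emph{not} of the same form: writing $v=e^{-\psi}u$, the equation for $v$ acquires first-order drift terms of size $O(\gamma)$ and a zeroth-order potential of size $O(\gamma^{2})$. Consequently the $L^1\to L^\infty$ ultracontractive bound for $T^\psi_\tau$ does not follow by simply citing Stage~1; one must redo the Nash iteration on the conjugated equation and track how the drift/potential terms enter (this is precisely where the exponential factor $e^{C\lambda\gamma^2\tau}$ appears in the end-point estimate, not merely from the quadratic form inequality of Stage~2). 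As written, Stage~2 only delivers $\|T^\psi_\tau\|_{L^2\to L^2}\le e^{C\lambda\gamma^2\tau}$, which, combined with the $L^1\to L^\infty$ bound for the \emph{unconjugated} semigroup, does not give you the conjugated $L^1\to L^\infty$ bound you use in Stage~3. Either you need to carry out the Nash/Moser iteration once more for the perturbed generator, or switch to Grigor'yan's integrated-maximum-principle argument, which shows directly that $\int p_\tau(z,y)^2 e^{|z-y|^2/(D\tau)}\,dz$ is finite and controlled by the on-diagonal bound, thereby avoiding a second pass through Moser entirely. Without one of these, the proof is incomplete at the key step.

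As a minor point, the symmetrisation parenthetical is unnecessary: the formal adjoint of $\partial_t-\partial_i(a^{ij}\partial_j\cdot)$ is again a divergence-form operator $-\partial_s-\partial_j(a^{ij}\partial_i\cdot)$ with the same ellipticity constant, whether or not $a^{ij}$ is symmetric, so the dual smoothing estimate holds without the symmetry hypothesis (which Aronson's theorem does not assume).
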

\begin{rmk}
In \cite{Aronson-fund-sol}, a lower bound for the fundamental solution is also provided. However, we will only use an upper bound in the sequel.
\end{rmk}

Let us now prove Theorem \ref{dream-conv-rate}.

\begin{proof}[Proof of Theorem \ref{dream-conv-rate}]
Recall that equation \eqref{diff-inequ-norm-h} from the proof of Proposition \ref{lemma-fast-C-0} implies that the weighted norm of the difference of the solutions $u(x,t)\coloneqq t^{-C}|h(t)|^2\coloneqq t^{-C}|\ti g_2(t)-\ti g_1(t)|^2_{\delta}$ satisfies for some large positive constant $C$ on $\B(0,R)\times(0,T)$,
\begin{equation}\label{diff-inequ-aux-u}
\partial_tu\leq \partial_a\left(\tilde{h}^{ab} \partial_b u\right).
\end{equation}
The next step consists in localizing the previous differential inequality \eqref{diff-inequ-aux-u} in order to apply Theorem \ref{lemma-fund-sol-est}. To do so, let $\psi:\R^n\rightarrow[0,1]$ be any smooth cut-off function with support in $\B(0,R)$ such that $\psi\equiv1$ on  $\B(0,R/2)$ and $|D\psi|\leq c/R$ on $\B(0,R)$. Then the function $\psi u$ satisfies on $\R^n\times(0,T)$:
\begin{equation*}
\partial_t\left(\psi u\right)\leq \partial_a\left(\tilde{h}^{ab} \partial_b \left(\psi u\right)\right)+\underbrace{C|D\psi||Du|+C\left(\frac{|D\psi|}{\sqrt{t}}+|D^2\psi|\right)|u|}_{\text{compactly supported in $\B(0,R)\setminus\B(0,R/2)$}},
\end{equation*}
where the coefficients $\tilde{h}^{ab}$ are extended arbitrarily by continuity on $\R^n\times(0,T)$ so that $(1-\varepsilon(n))|\xi|^2\leq \tilde{h}^{ab}(x,t)\xi_a\xi_b\leq (1+\varepsilon(n))|\xi|^2$ for all $\xi\in\R^n$. Here we have used the fact that $|D\ti g_i|$, $i=1,2$ (and hence $|D\ti h|$) is uniformly bounded by $C/\sqrt{t}$.\\

The parabolic maximum principle applied to $\psi u  $ shows that for $0<t_0<t$ and $x\in\R^n$, $(\psi u)(x,t)\leq \ell(x,t)$ where $\ell$ is the standard solution to $\partial_t\ell= \partial_a\left(\tilde{h}^{ab} \partial_b \ell\right) + S(t)$ with $\ell(t_0) = u(t_0)$ and where the source term is defined as $S(t)\coloneqq C|D\psi||Du|+C(t^{-\frac{1}{2}}|D\psi|+|D^2\psi|)|u|$: The maximum principle can be applied as $\ell(t) \geq 0$ and $\psi u(t) =0$ outside the ball $\B(0,R)$.   In particular,
\begin{equation*}
(\psi u)(x,t)\leq \int_{\R^n}K(x,t,y,t_0) (\psi u) (y,t_0)\,dy+\int^t_{t_0}\int_{\R^n}K(x,t,y,s)S(y,s)\,dy ds, \quad x\in\R^n.
\end{equation*}

     
Taking into account the definition of $S$ and the support of $\psi$ leads to:
\begin{equation}
\begin{split}\label{mean-value-u}
u(x,t)&\leq \int_{\B(0,R)}K(x,t,y,t_0) u (y,t_0)\,dy\\
&+C(R)\int^t_{t_0}\int_{\B(0,R)\setminus \B(0,R/2)}K(x,t,y,s)\left(|Du|(y,s)+\left(1+\frac{1}{\sqrt{s}}\right)|u|(y,s)\right)\,dy ds, 
\end{split}
\end{equation}
if $x\in\B(0,R/2)$ and $t_0<t$.

 Now,
  Proposition \ref{lemma-fast-C-0} applied to the time interval $[r_i,T)$ for any sequence of positive times $r_i$ with $r_i \to 0$ as $i\to \infty$, and the fact that $\tau_i := \sup_{\B(0,R)} |\ti g_2(r_i) - \ti g_1(r_i)|  \to 0$  tells us that
$|\ti g_2(t) - \ti g_1(t)| \leq   C(n,k,R) t^k$ for $t\in (0,T)$  and hence $u(\cdot,t)$ satisfies $\lim_{t\rightarrow 0^+}\left(1+t^{-\frac{1}{2}}\right)u(\cdot,t)=0$ by replacing $R$ with $R/2^k=:R_0$ with $k\in\mathbb{N}$ large enough if necessary. Similarly,  Proposition \ref{lemma-fast-C-0},  ensures that the gradient of $u$ is uniformly bounded on $\B(0,R_0)\times(0,T)$ and approaches zero uniformly as $t\downto 0$. In particular, by letting $t_0$ go to $0^+$, Inequality \eqref{mean-value-u} together with Theorem \ref{lemma-fund-sol-est} applied to $u(\cdot,t)$ imply that there exist $R_0$, $C_0$ and $\alpha_0$ depending on $n$, $\varepsilon(n)$ and $T$ such that if $x\in\B(0,R_0/4)$ and $t\in(0,T)$:
\begin{equation*}
\begin{split}
u(x,t)&\leq C_0\int_0^t\int_{\B(0,R_0)\setminus \B(0,R_0/2)}(t-s)^{-\frac{n}{2}}\exp\left\{-\alpha_0\frac{|x-y|^2}{t-s}\right\}\,dy ds\\
&\leq C_0\int_0^t\int_{\B(0,R_0)\setminus \B(0,R_0/2)}(t-s)^{-\frac{n}{2}}\exp\left\{-\alpha_0\frac{3R_0^2}{16(t-s)}\right\}\,dy ds\\
&\leq C_0R_0^n\int_0^t\tilde{s}^{-\frac{n}{2}}\exp\left\{-\alpha_0\frac{3R_0^2}{16\tilde{s}}\right\}\, d\tilde{s}=C_0R_0^2\int_{\frac{R_0^2}{t}}^{+\infty}\tau^{\frac{n}{2}-2}\exp\left\{-\alpha_0\frac{3\tau}{16}\right\}\,d\tau\\
&\leq \frac{32}{3\alpha_0}C_0R_0^2\left(\frac{R_0^2}{t}\right)^{\frac{n}{2}-2}\exp\left\{-\alpha_0\frac{3R_0^2}{16 t}\right\},
\end{split}
\end{equation*}
if $t\leq T_0=\min\{T,\varepsilon_0(n)\alpha_0R_0^2\}$, where $\varepsilon_0(n)$ denotes a positive constant depending on $n$ only.
Indeed, an integration by parts shows that:
\begin{equation*}
\begin{split}
\int_{\frac{R_0^2}{t}}^{+\infty}\tau^{\frac{n}{2}-2}\exp\left\{-\alpha_0\frac{3\tau}{16}\right\}\,d\tau&=\frac{16}{3\alpha_0}\left(\frac{R_0^2}{t}\right)^{\frac{n}{2}-2}\exp\left\{-\alpha_0\frac{3	R_0^2}{16t}\right\}\\
&\quad+\frac{8(n-4)}{3\alpha_0}\int_{\frac{R_0^2}{t}}^{+\infty}\tau^{\frac{n}{2}-3}\exp\left\{-\alpha_0\frac{3\tau}{16}\right\}\,d\tau\\
&\leq \frac{16}{3\alpha_0}\left(\frac{R_0^2}{t}\right)^{\frac{n}{2}-2}\exp\left\{-\alpha_0\frac{3R_0^2}{16t}\right\}\\
&\quad+\frac{8t|n-4|}{3\alpha_0R_0^2}\int_{\frac{R_0^2}{t}}^{+\infty}\tau^{\frac{n}{2}-2}\exp\left\{-\alpha_0\frac{3\tau}{16}\right\}\,d\tau,
\end{split}
\end{equation*}
which implies the expected result by absorption if $t$ is chosen small enough compared to $n$, $\alpha_0$ and $R_0$.

By unravelling the definition of $u$ in terms of the norm of the difference of the solutions to $\delta$-Ricci DeTurck flow, this proves the expected exponential decay. 
\end{proof}

Combining Corollary \ref{cor-pol-main-thm-version} and Theorem \ref{dream-conv-rate} leads to the proof of Theorem \ref{thm:main.1}: 

\begin{proof}[Proof of Theorem \ref{thm:main.1}]
Thanks to  Corollary \ref{cor-pol-main-thm-version}, there exist solutions $(\tilde{g}_1(t))_{t\in(0,\hat{T})}$ and $(\ti g_2(t))_{t\in(0,\hat{T})}$ to $\de$-Ricci-DeTurck flow associated to $(g_i(t))_{t\in(0,\hat{T})}$, $i=1,2$, which satisfy all the conditions stated in Theorem \ref{thm:main.1} but the exponential decays stated in \eqref{exp-dec-main-thm} and \eqref{exp-dec-main-thm-higher-cov-der}.

Now, we can apply Theorem \ref{dream-conv-rate} to the two aforementioned solutions to $\de$-Ricci-DeTurck flow in order to guarantee an exponential convergence rate as expected in \eqref{exp-dec-main-thm}.
As for \eqref{exp-dec-main-thm-higher-cov-der}, we invoke interpolation inequalities \eqref{int-inequ-easy} as used in the proof of Proposition \ref{lemma-fast-C-0}
 together with [\eqref{pol-close-sec-5}, Corollary \ref{cor-pol-main-thm-version}] and the previously established $C^0$ exponential bound on $|\ti g_1(t) -\ti g_2(t)|_{\de}$.
\end{proof}

\section{Almost Ricci-pinched expanding gradient Ricci solitons} \label{alm-ric-pin-sec}
 
In this section, we prove that if an expanding gradient Ricci soliton has non-negative Ricci curvature and if it is almost Ricci-pinched in a suitable sense then it is Euclidean.
Recall that a triple $(M^n,g,\nabla^gf)$ is an expanding gradient soliton if $(M^n,g)$ is a smooth Riemannian manifold, $f:M \to \R$ is  smooth and
$ \nabla^{g,2}f= \Rc(g) +\frac{1}{2}g.$
We use the following definition, compare \cite[Definition $1.1$]{Der-Asy-Com},  to describe exponential closeness of an  expanding gradient Ricci soliton to a smooth cone at infinity. 

We gather well-known soliton identities holding on a self-similar expander:
\begin{lemma}\label{id-EGS}
Let $(M^n,g,\nabla^g f)$ be an expanding gradient Ricci soliton. Then the trace and first order soliton identities are:
\begin{equation*}
\begin{split}
\Delta_g f &= \Sc_g+\frac{n}{2}, \\
\nabla^g \Sc_g+ 2\Rc(g)(\nabla^g f)&=0, \\
|\nabla f|^2_g+\Sc_g&=f+cst .
\end{split}
\end{equation*}
\end{lemma}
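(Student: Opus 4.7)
The plan is to derive the three identities by differentiating the soliton equation $\nabla^{g,2}f = \Rc(g) + \frac{1}{2}g$ and invoking the contracted second Bianchi identity together with the standard commutator of covariant derivatives.

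For the trace identity, I will simply take the trace of the soliton equation with respect to $g$: the left-hand side gives $\Delta_g f$, while the right-hand side gives $\Sc_g + \frac{n}{2}$. This immediately yields the first line.

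For the second identity, I will take the divergence of the soliton equation. Writing it as $\nabla_i\nabla_j f = R_{ij} + \frac{1}{2}g_{ij}$ and applying $\nabla^i$, the right-hand side becomes $\nabla^i R_{ij} = \frac{1}{2}\nabla_j \Sc_g$ by the contracted second Bianchi identity (the metric term is killed by the covariant derivative). On the left, I will commute derivatives using the Ricci identity
\begin{equation*}
\nabla^i\nabla_i\nabla_j f \;=\; \nabla_j \Delta_g f + R_{jl}\nabla^l f,
\end{equation*}
and then substitute $\Delta_g f = \Sc_g + \frac{n}{2}$ from the trace identity so that $\nabla_j \Delta_g f = \nabla_j \Sc_g$. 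Equating the two expressions and rearranging gives $\nabla_j \Sc_g + 2R_{jl}\nabla^l f = 0$, which is the second stated identity.

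For the third identity, I will differentiate $|\nabla f|^2_g$ and use the soliton equation once more:
\begin{equation*}
\nabla_j |\nabla f|^2_g \;=\; 2 \nabla^i f \, \nabla_i\nabla_j f \;=\; 2R_{ij}\nabla^i f + \nabla_j f.
\end{equation*}
Adding $\nabla_j \Sc_g$ and using the second identity shows $\nabla_j(|\nabla f|^2_g + \Sc_g - f) = 0$, so the quantity is locally constant; since $M$ is connected (being a Ricci soliton manifold in the usual convention, and otherwise one argues component by component), this constant is global, giving the third identity.

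None of these steps involves a genuine obstacle; the only delicate point is keeping the signs and index conventions consistent in the Ricci commutator and the contracted Bianchi identity, which is purely computational.
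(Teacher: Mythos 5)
Your derivation is correct, and it is the standard computation; the paper itself gives no proof of this lemma but simply cites \cite[Chapter 4, Section 1]{CLN}, where exactly this argument (trace the soliton equation, take its divergence and commute derivatives via the contracted Bianchi and Ricci identities, then integrate the resulting exact one-form) is carried out. Nothing further is needed.
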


See for instance \cite[Chapter $4$, Section $1$]{CLN} for a proof.

\begin{defn}\label{defn-egs-norm}
An expanding gradient Ricci soliton $(M^n,g,\nabla^gf)$ is said to be \textit{normalized} if $f$ is normalized such that $|\nabla f|^2_g+\Sc_g=f$ on $M$. 
\end{defn}

\begin{rmk}
According to Lemma \ref{id-EGS}, a normalized expanding gradient Ricci soliton $(M^n,g,\nabla^gf)$ with nonnegative scalar curvature has a nonnegative potential function $f$.
\end{rmk}

\begin{defn}\label{defn-coni-exp}
A normalized expanding gradient Ricci soliton $(M^n,g,\nabla^g f)$ is asymptotic to a metric cone  $(C(\Sigma),g_{C(\Sigma)}:=dr^2+r^2g_{\Sigma},r\partial r/2)$ over a smooth Riemannian link $(\Sigma,g_{\Sigma})$ at an exponential rate if there exists a compact $K\subset M$, a positive radius $R$ and a diffeomorphism $\phi:M\setminus K\rightarrow C(\Sigma)\setminus \overline{B_{d(g_{C(\Sigma)})}(o,R)}$ such that
\begin{equation*}
\begin{split}
&\sup_{\partial B(o,r)}\arrowvert\nabla^k(\phi_*g-g_{C(\Sigma)})\arrowvert_{g_{C(\Sigma)}}=\textit{O}(r^{-n+k}e^{-\frac{r^2}{4}}),\quad \forall k\in \mathbb{N},\\
&f(\phi^{-1}(r,x))=\frac{r^2}{4},\quad\forall \, (r,x)\in C(\Sigma)\setminus \overline{B_{d(g_{C(\Sigma)})}(o,R)},
\end{split}
\end{equation*}
 as $r\rightarrow+\infty$.
\end{defn}

We are now in a position to state the main rigidity result of this section.

\begin{prop}\label{prop-rig-alm-ric-pin-sol}
Let $(M^n,g,\nabla^gf)$ be a normalized complete expanding gradient Ricci soliton with non-negative Ricci curvature. Assume there exist $c>0$ and $\beta\in(0,1)$ such that 
\begin{equation}
c\Sc_gg\,\leq\,\Rc(g)+\exp\left({-f^{\beta}}\right)g,\quad\text{on $M$.}\label{alm-ric-pin}
\end{equation}
Then,
\begin{enumerate}
\item \label{id-asy-cone}$(M^n,g,\nabla^g f)$ is asymptotic to a Ricci flat metric cone $\left(C(\Sigma),g_{C(\Sigma)},\frac{1}{2}r\partial_r\right)$ at an exponential rate in the sense of Definition \ref{defn-coni-exp}, where $\Sigma$ is diffeomorphic to $\mathbb{S}^{n-1}$.\\[-2ex]
\item \label{rig-low-dim}If $ n\in \{3,4\}$, $(M^n,g,\nabla^g f)$ is isometric to the Gaussian soliton $(\mathbb{R}^n,\delta,\frac{1}{2}r\partial_r)$.\\[-2ex]
\item \label{rig-2-pos-cur}If $g$ has $2$-non-negative curvature operator then $(M^n,g,\nabla^g f)$ is isometric to the Gaussian soliton $(\mathbb{R}^n,\delta,\frac{1}{2}r\partial_r)$.
\end{enumerate}
\end{prop}

\begin{rmk}
(1) This proposition was proved in \cite[Proposition $3.5$]{Der-Asy-Com} under the assumption that the metric is Ricci pinched, i.e.~$c \,\Sc_gg\leq \Rc(g)$ on $M$ for some positive constant $c$. See the references therein for related results.\\[1ex]
(2) The proof in dimension $3$ can be simplified from the one given in \cite{Der-Asy-Com} since in this particular dimension, the Ricci curvature controls pointwise the whole curvature tensor. Indeed, as soon as the Ricci curvature is shown to decay exponentially fast, then it follows that the norm of the full curvature tensor, as well as   the norms of the higher covariant derivatives of the curvature tensor also decay exponentially fast, in view of  Bernstein-Shi type estimates applied to the curvature tensor:  
See for example \cite[Chapter $6$]{CLN}. 
\end{rmk}
\begin{rmk}
 We also mention the paper \cite{Chan-Pak} that shows that any expanding gradient Ricci soliton coming out of Euclidean space is Euclidean and which relies on the positive mass theorem due to \cite{Sch-Yau-Pos-Mass} in all dimensions.
\end{rmk}
\begin{rmk}
The assertion \eqref{rig-2-pos-cur} from Proposition \ref{prop-rig-alm-ric-pin-sol} supports Question \ref{ques-DSS}.
\end{rmk}
We underline the fact that the rigidity part of the proof of Proposition \ref{prop-rig-alm-ric-pin-sol} relies on the rigidity of the Bishop-Gromov inequality only.

\begin{proof}[Proof of Proposition \ref{prop-rig-alm-ric-pin-sol}]
Recall that the potential function of an expanding gradient Ricci soliton with non-negative Ricci curvature satisfies:
\begin{equation*}
\nabla^{g,2}f\geq \frac{g}{2},\quad\text{on $M$.}
\end{equation*}
In particular, $f$ is strictly convex and by integrating along geodesics, $f$ is proper. More precisely, 
\begin{equation}
f(x)\geq \min_Mf+\frac{d_g(p,x)^2}{4},\quad \forall\, x\in M,\label{low-bd-f}
\end{equation}
where $p$ is the unique critical point of $f$ which necessarily corresponds to the minimum of $f$.   In particular, by the Morse lemma, this implies that the level sets $\{f=t\}$ of $f$, for $t>\min_Mf$, are all diffeomorphic to $\mathbb{S}^{n-1}$ and that $M$ is diffeomorphic to Euclidean space $\R^n$.

 By the first order identity $|\nabla^gf|^2_g+\Sc_g=f$ that holds on $M$ by Definition \ref{defn-egs-norm}, one also gets $|\nabla^gf|^2_g\leq f$ on $M$ which implies:
\begin{equation}
f(x)\leq \left(\sqrt{\min_Mf}+\frac{d_g(p,x)}{2}\right)^2,\quad \forall\, x\in M.\label{upp-bd-f}
\end{equation}

Let us consider the Morse flow associated to $f$
  which is well-defined outside a compact set thanks to the first order identity mentioned above and the lower bound \eqref{low-bd-f} on $f$, i.e. 
\begin{equation}
\partial_t\phi_t=\left(\frac{1}{|\nabla^gf|^2_g}\nabla^gf\right)\circ\phi_t,\quad \phi_{t_0}=\operatorname{Id}_M,\quad \text{for $t\geq t_0\geq \min_Mf+1$.}
\end{equation}

Then, $f(\phi_t(x))=f(x)+t-t_0=t$, $t\geq t_0$ and $x\in f^{-1}(\{t_0\})$. Now, thanks to the contracted Bianchi identity, $\nabla^g \Sc_g+2\Rc(g)(\nabla^gf)=0$ on $M$ from Lemma \ref{id-EGS}. Contracting this identity with $\nabla^gf$ and   invoking \eqref{alm-ric-pin},  we see
\begin{equation*}
2c \Sc_g|\nabla^gf|^2_g+g(\nabla^g\Sc_g,\nabla^gf)\leq 2e^{-f^{\beta}}|\nabla^gf|^2_g,\quad\text{on $M$}.
\end{equation*}
Reinterpreting the previous differential inequality in terms of the Morse flow associated to $f$, one gets:
\begin{equation*}
\frac{\partial}{\partial t}\left(e^{2ct} \Sc_g(\phi_t(x))\right)\leq 2 e^{-t^{\beta}+2c t},\quad t\geq t_0. 
\end{equation*}
In particular, by integrating from $t_0$ to $t$:
\begin{equation*}
\begin{split}
e^{2ct} \Sc_g(\phi_t(x))&\leq e^{2ct_0} \Sc_g(\phi_{t_0}(x))+2 \int_{t_0}^te^{-s^{\beta}+2c s}\,ds\\
&= e^{2ct_0} \Sc_g(x)+2 \int_{t_0}^te^{-s^{\beta}+2c s}\,ds\\
&\leq e^{2ct_0} \Sc_g(x)+ \frac{1}{c}e^{2ct-t^{\beta}},
\end{split}
\end{equation*}
if $t_0$ is chosen sufficiently large with respect to $c$ and $\beta$. Indeed, since $\beta\in(0,1)$ and $c>0$, an integration by parts shows,
\begin{equation*}
\begin{split}
\int_{t_0}^te^{-s^{\beta}+2c s}\,ds&=\left[\frac{e^{2cs-s^{\beta}}}{2c}\right]_{t_0}^t+\frac{\beta}{2c}\int_{t_0}^ts^{\beta-1}e^{-s^{\beta}+2cs}\,ds\\
&\leq \frac{e^{2cs-s^{\beta}}}{2c}+\frac{\beta t_0^{\beta-1}}{2c}\int_{t_0}^te^{-s^{\beta}+2c s}\,ds,
\end{split}
\end{equation*}
which leads by absorption to $\int_{t_0}^te^{-s^{\beta}+2c s}\,ds\leq \frac{1}{c}e^{2ct-t^{\beta}}$ if $t_0$ is large enough so that $\beta/(2c)t_0^{\beta-1}\leq 1/2.$

 As a first conclusion, one gets $\Sc_g(\phi_t(x))\leq e^{-2c(t-t_0)} \Sc_g(x)+  \frac{1}{c}e^{-t^{\beta}}$ for any $x\in f^{-1}(\{t_0\})$. By definition of the Morse flow, this gives 
\begin{equation*}
\Sc_g(y)\leq \max_{x \in f^{-1}(\{t_0\})} \Sc_g(x) \cdot  e^{2c(t_0-f(y))} + \frac{1}{c}e^{-f(y)^{\beta}},
\end{equation*}
 for any $y\in M$ such that $f(y)\geq t_0\geq \min_Mf+1$.
 
This shows that $ \Sc_g$ decays as fast as $e^{-f^{\beta}}$ at infinity, i.e. there exists a positive constant $C$ such that for all $y\in M$, $\Sc_g\leq Ce^{-f(y)^{\beta}}$. So does the Ricci curvature since $g$ has non-negative Ricci curvature.\\

The rest of the proof of (\ref{id-asy-cone}) and (\ref{rig-low-dim}) is \cite[Proposition $3.5$]{Der-Asy-Com}   verbatim. \\

In order to prove (\ref{rig-2-pos-cur}), we invoke (\ref{id-asy-cone}) to ensure that the   link is diffeomorphic to $\mathbb{S}^{n-1}$ and is endowed with an Einstein metric such that the cone over it has $2$-non-negative curvature operator. The results of \cite{Bohm-Wilking} then shows the expected rigidity result: the cone is isometric to Euclidean space which forces the soliton to be the expanding Gaussian soliton by Bishop-Gromov volume comparison.
\end{proof}

\section{Compactness of Ricci-pinched manifolds in dimension three}\label{sec-comp-ric-pinch}

We consider $(M^3,g_0)$ a smooth, complete Riemannian manifold with uniformly pinched, non-negative, bounded Ricci curvature. Assuming by contradiction that $(M^3,g_0)$ is non-compact and non-flat from now on, the work of Lott \cite{Lott-Ricci-pinched} ensures there exists a smooth, complete Ricci flow $(M^3, g(t))_{t\in [0,\infty)}$ with $g(0)=g_0$, satisfying the following properties:
\begin{enumerate}
\item \label{type-iii-lott}\cite[Propositions $2.1$, $2.13$]{Lott-Ricci-pinched}   there exists $C>0$ such that 
\begin{equation*}
 |\Rm(g(t))| \leq \frac{C}{t}\,, \text{ for all } t > 0,\   
\end{equation*}
\item \label{ric-pinch-lott}\cite[Theorem $9.4$]{ham3D}, \cite[Proposition $2.1$]{Lott-Ricci-pinched} the flow has $\Rc(g(t)) >0$ and is uniformly Ricci pinched, i.e.~there exists $\lambda_0>0$ such that 
 \begin{equation*}
 \Rc(g(t)) \geq \lambda_0 \Sc_{g(t)} \cdot g(t)\,,  \text{ for all } t \geq 0, 
 \end{equation*}
\item \label{unif-non-coll-lott}  \cite[Propositions $1.5$, $3.1$ and $4.1$]{Lott-Ricci-pinched} the flow is uniformly volume non-collapsed, i.e.~ there exists $v_0>0$ such that 
 \begin{equation*}
 \vol_{g(t)} (B_{g(t)}(x,r)) \geq v_0 r^3\,,
 \end{equation*}
  for all $x \in M$, $r>0$ and $t>0$. 
  \end{enumerate}



By performing a blow-down we can instead assume that the flow $(M^3,g(t))$ is only smooth for $t>0$ and converges in the metric  sense uniformly as $t \to 0$ to a metric space  $(M, d_0)$,  since $d_0  \geq d_t \geq d_0 -c_0\sqrt{t}$ for all $t>0$ by Lemma \ref{SiTopThm1}. 
In fact  $(M, d_0) $ is a  metric cone  over a 2-dimensional metric space   $(\Sigma^2,\tilde d_0)$  from Cheeger-Colding \cite[Theorem $9.79$]{Cheeger_notes}, it is furthermore uniformly volume non-collapsed. 
Finally, by Lemma \ref{SiTopThm1}, $(M,d_0)$ is homeomorphic to $(M,d_t)$ for all $t>0$.


\begin{lemma}\label{lemma-no-cone-pt}
Let $o$ be the tip of the cone $(M, d_0).$  
Then for $r'>r>0$, $(A_{d_0}(o,r,r')\coloneqq  B_{d_0}(o,r') \backslash \overline{B_{d_0}(o,r)},d_0) \subseteq M$ is Reifenberg (and hence uniformly Reifenberg due to Lemma \ref{UniformReifenbergLemma}).
\end{lemma}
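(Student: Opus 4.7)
The plan is to show that every tangent cone of $(M, d_0)$ at every point $x \in A_{d_0}(o, r, r')$ is isometric to $(\R^3, d(\delta))$; the uniform Reifenberg conclusion then follows from Lemma \ref{UniformReifenbergLemma} applied to the compact set $\overline{A_{d_0}(o, r, r')}$. The hypotheses of Lemma \ref{UniformReifenbergLemma} are satisfied because $(M, d_0)$ is a non-collapsed pointed Gromov--Hausdorff limit of $(M, g(t_k), p_0)$ along any sequence $t_k \downto 0$, with each factor satisfying $\Rc(g(t_k)) \geq 0$ and hence in particular $\Rc(g(t_k)) \geq -g(t_k)$, by properties \ref{type-iii-lott}--\ref{unif-non-coll-lott} of Lott's solution.

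Fix $x \in A_{d_0}(o, r, r')$ with $\rho := d_0(o, x) \in (r, r')$. Since $(M, d_0)$ is a metric cone with apex $o$, rescaling distances around $x$ by any sequence $\lambda_k \to +\infty$ straightens the radial ray from $o$ through $x$ into a full line, so every tangent cone of $(M,d_0)$ at $x$ automatically splits off an $\R$-factor isometrically. The goal is to upgrade this splitting into the stronger statement that the entire tangent cone at $x$ equals $\R^3$.

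To that end, I will parabolically blow up Lott's Ricci flow at $(x,0)$: for $\lambda_k \to +\infty$, define $g_k(s) := \lambda_k^2\, g(s/\lambda_k^2)$ on $M$. By properties \ref{type-iii-lott}--\ref{unif-non-coll-lott} and Hamilton's compactness theorem, the pointed flows $(M, g_k(s), x)_{s>0}$ subconverge in $C^{\infty}_{loc}$ to a pointed Ricci flow $(\tilde M^3, \tilde g(s), \tilde x)_{s > 0}$ that still satisfies $|\Rm(\tilde g(s))| \leq C/s$, $\Rc(\tilde g(s)) \geq 0$, uniform $v_0$-non-collapsing, and the scale-invariant pinching $\Rc(\tilde g(s)) \geq \lambda_0 \Sc_{\tilde g(s)}\,\tilde g(s)$. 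The distance distortion estimate of Lemma \ref{SiTopThm1} applied to each $g_k$ identifies the initial metric space of this limit flow with the tangent cone of $(M, d_0)$ at $x$. Since that initial metric space splits an $\R$-factor, Proposition \ref{prop:Hochard-splitting} (Hochard's splitting theorem for Type III non-collapsed Ricci flows with non-negative Ricci curvature starting from a metric space splitting a line) forces the whole flow to split as $(\tilde M, \tilde g(s)) = (\R \times N^2,\, ds_0^2 + h(s))$ for some two-dimensional Ricci flow $(N^2, h(s))$.

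Inserting the unit vector $\partial_{s_0}$ tangent to the $\R$-factor into the pinching inequality yields $0 = \Rc(\tilde g(s))(\partial_{s_0}, \partial_{s_0}) \geq \lambda_0 \Sc_{\tilde g(s)}$, whence $\Sc_{\tilde g(s)} \leq 0$; combined with $\Rc(\tilde g(s)) \geq 0$ this forces $\Sc_{\tilde g(s)} \equiv 0$ and therefore $\Rc(\tilde g(s)) \equiv 0$. In dimension two this makes $h(s)$ flat, so $\tilde g(s)$ is a flat metric on $\tilde M \cong \R \times N^2$; the uniform $v_0$-non-collapsing rules out any nontrivial quotient of $\R^3$ and forces $(\tilde M, \tilde g(s)) \cong (\R^3, \delta)$. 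Letting $s \downto 0$ then yields that the tangent cone at $x$ equals $(\R^3, d(\delta))$, completing the Reifenberg verification at $x$. The main obstacle is the clean inheritance of the pinching and non-collapsing constants by the smooth blow-up limit and the applicability of Proposition \ref{prop:Hochard-splitting} to the non-smooth initial datum; both are precisely what Lott's construction and the cited splitting theorem are designed to provide, so the remaining content of the proof reduces to the short scalar curvature computation above.
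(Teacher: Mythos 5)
Your proposal is correct and follows essentially the same route as the paper: blow up Lott's flow at a point away from the apex, identify the initial condition of the limit flow with a tangent cone that splits a line because $(M,d_0)$ is a metric cone, apply Hochard's splitting (Proposition \ref{prop:Hochard-splitting}), use the inherited Ricci pinching to force the split flow to be static flat $\R^3$, and conclude via Lemma \ref{UniformReifenbergLemma}. Your explicit computation that inserting the splitting direction into the pinching inequality gives $\Sc\le 0$, hence $\Rc\equiv 0$ and flatness, merely spells out the step the paper states without detail.
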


\begin{proof}

We first show that every tangent cone to $(M, d_0)$ at any point away from the vertex $o$ is 3-dimensional Euclidean space.

 Let $p \in M \setminus \{o\} $ and consider a blow-up sequence of the flow $(M^3,g(t))_{t\in(0,\infty)}$ around $(p,0)$. Since $(M,d_0)$ is a cone, this converges to a flow $(M',(g'(t))_{t\in (0,\infty)}),$ with the  same properties \eqref{type-iii-lott}, \eqref{ric-pinch-lott}, \eqref{unif-non-coll-lott} 
  that the original solution had, 
   coming out of a cone $(\Cone', d_0')$ which splits a line, i.e.~$(\Cone', d_0')$ is isometric to $(\Cone'' \times \mathbb{R}, d_0'' \oplus ds^2)$. But then Proposition \ref{prop:Hochard-splitting} implies that $(M',(g'(t))_{t\in (0,\infty)})$ splits a  Euclidean factor. Since $(M',(g'(t))_{t\in (0,\infty)})$ is still uniformly Ricci pinched and 3-dimensional, this implies that $(M',(g'(t))_{t\in (0,\infty)})$ is static flat Euclidean $(\mathbb{R}^3, \delta)$ and thus $(\Cone', d_0')$ is isometric to $(\mathbb{R}^3, d(\delta))$.
 
  Considering $\overline{A_{d_0}(o,r,r')} \subseteq X$, Lemma \ref{UniformReifenbergLemma} then shows that $(A_{d_0}(o,r,r'),d_0)$ is uniformly Reifenberg.
 \end{proof}

We consider the induced distance $\tilde d_0$ on the link $\Sigma$.

\begin{lemma}\label{lemma-Alex-cone-0}
 The metric space $(\Sigma, \tilde d_0)$ is an Alexandrov space of curvature at least $1$.
\end{lemma}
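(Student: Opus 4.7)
The plan is to combine Ketterer's cone characterization for $\operatorname{RCD}$ spaces with the structural identification of two-dimensional non-collapsed $\operatorname{RCD}$ spaces with Alexandrov spaces of curvature bounded below.

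First I would record the synthetic Ricci bound on the link. This has essentially already been observed in the discussion preceding Lemma \ref{lemma-no-cone-pt}: the blow-down limit $(M, d_0, \Haus^3_{d_0})$ is a non-collapsed $\operatorname{RCD}^*(0,3)$ space (being the uniform limit of smooth manifolds with $\Rc\ge 0$ and uniform Euclidean volume growth, so that \cite[Theorem 7.2]{Gig-Mon-Sav} applies) and it has the metric cone structure $(M, d_0)=C(\Sigma)$. Ketterer's theorem characterizing metric cones over $\operatorname{RCD}$ spaces \cite{Ketterer15} then transfers the $\operatorname{RCD}^*(0,3)$ condition on $M$ into the $\operatorname{RCD}^*(1,2)$ condition on $(\Sigma, \tilde d_0, \Haus^2_{\tilde d_0})$.

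Second, I would invoke the structural result specific to dimension two: every non-collapsed $\operatorname{RCD}^*(K,2)$ space is an Alexandrov space with curvature $\geq K$. Applied with $K=1$ this immediately yields the lemma. To reduce the dependence on the strongest available form of this theorem one can first exploit the additional regularity coming from Lemma \ref{lemma-no-cone-pt}. Indeed, combining Lemma \ref{lemma-no-cone-pt} with the cone identity $M=C(\Sigma)$ shows that each tangent cone of $\Sigma$ at each $x \in \Sigma$ is isometric to $(\R^2,d(\delta))$: the tangent cone of $C(\Sigma)$ at a non-apex point $(r_0,x)$ splits isometrically as $\R \times T_x\Sigma$ and must equal $\R^3$ by Lemma \ref{lemma-no-cone-pt}, forcing $T_x\Sigma = \R^2$. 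Thus $(\Sigma, \tilde d_0)$ is uniformly Reifenberg in the sense of \eqref{UniformReifenbergCond}, hence in particular a topological $2$-manifold carrying a $C^{\alpha}$ Riemannian structure by Cheeger-Colding regularity.

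The main obstacle I expect is locating the sharpest form of the ``two-dimensional $\operatorname{RCD}$ is Alexandrov'' theorem in the literature, since this has been established in several stages. In the present Reifenberg setting, however, the bottleneck disappears: on a $C^{\alpha}$ Riemannian $2$-manifold the Bochner inequality encoding $\operatorname{RCD}^*(1,2)$ integrates along geodesic variations to yield the $\ge 1$ triangle comparison, which is precisely the Alexandrov condition for curvature not less than one. This gives the claimed Alexandrov structure on $(\Sigma, \tilde d_0)$ and completes the proof.
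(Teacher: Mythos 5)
Your primary route is essentially the paper's own proof: Ketterer's cone theorem \cite{Ketterer15} to transfer $\operatorname{RCD}^*(0,3)$ on $(M,d_0)$ into $\operatorname{RCD}^*(1,2)$ on the link, followed by the two-dimensional structure theorem identifying such spaces with Alexandrov spaces of curvature $\geq 1$. The only technical point you leave implicit is the passage from the reduced condition $\operatorname{RCD}^*(1,2)$ to $\operatorname{RCD}(1,2)$, which the paper handles by citing \cite[Corollary 13.7]{Cav-Mil} before invoking Lytchak--Stadler \cite{Lyt-Sta-Ric-2-dim}; with that step supplied, your first two paragraphs are a complete proof.

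Your proposed fallback, however, does not work. The claim that the Reifenberg property of $\Sigma$ (which does follow, as you argue, from Lemma \ref{lemma-no-cone-pt} and the splitting of tangent cones of $C(\Sigma)$ at non-apex points) upgrades to a $C^{\alpha}$ \emph{Riemannian} structure is false: the Cheeger--Colding Reifenberg theorem only gives a bi-H\"older homeomorphism to a smooth manifold, not H\"older continuity of any metric tensor inducing $\tilde d_0$. The paper itself warns about exactly this, citing \cite[Theorem 1.2]{Col-Nab-Branching} for a non-collapsed Ricci limit space that is uniformly Reifenberg yet whose distance is not induced by any H\"older continuous Riemannian metric. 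Consequently the final step of ``integrating the Bochner inequality along geodesic variations'' has no regular structure to run on, and in any case making the implication $\operatorname{RCD}(1,2) \Rightarrow$ Alexandrov $\geq 1$ rigorous is precisely the nontrivial content of \cite{Lyt-Sta-Ric-2-dim}; it cannot be dismissed as a routine integration. So you should rely on the full strength of that theorem rather than the sketched reduction.
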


The proof of Lemma \ref{lemma-Alex-cone-0} uses the notions of $\operatorname{RCD}^*$ and $\operatorname{RCD}$ spaces. For an overview of these notions, we invite the reader to read the introduction of \cite{Ketterer15} and the references therein together with the recent survey \cite{Gigli-Degiorgi-Gromov}.

\begin{proof}
Observe first that according to \cite[Theorem $7.2$]{Gig-Mon-Sav}, $(\Cone(\Sigma), d_0,\Haus^3_{d_0})$ is  an $\textit{$\operatorname{RCD}$}^*(0,3)$ space, as it is the limit (in the Gromov-Hausdorff and distance sense) of smooth spaces with non-negative Ricci curvature and uniform Euclidean volume growth, that is $\vol_{g(t)}(B_{g(t)}(p_0,r) )\geq v_0\,r^3$, for all $r>0$ and $t>0$.
Therefore, the  result of Ketterer \cite{Ketterer15} implies that $(\Sigma, \tilde d_0,\Haus^{2}_{\ti d_0})$ is an $\textit{$\operatorname{RCD}$}^*(1,2)$ space. Now, by \cite[Corollary $13.7$]{Cav-Mil}, $(\Sigma, \tilde d_0,\Haus^{2}_{\ti d_0})$ is an $\textit{$\operatorname{RCD}$}\,(1,2)$ space. But then the result of Lytchak-Stadler \cite{Lyt-Sta-Ric-2-dim} implies that $(\Sigma, \tilde d_0)$ is a two dimensional  Alexandrov space of curvature at least $1$.
\end{proof}

\begin{lemma}
 There exists a smooth structure on $\Sigma$ and a sequence of  metrics $(g_i)_{i\in \N}$ on $\Sigma$, which are smooth with respect to this structure,  such that $(\Sigma, g_i)$ has sectional curvature greater than one, and $(\Sigma, d(g_i))$ converges to  $(\Sigma, \tilde{d}_0)$  as $i\to \infty$ in the    Gromov-Hausdorff sense. In particular, $\Sigma$  is a $2$-sphere.
\end{lemma}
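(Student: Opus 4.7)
The plan is as follows. First, I would combine the cone structure $(M,d_0) = C(\Sigma)$ with Lemma~\ref{lemma-no-cone-pt} to show that $(\Sigma,\tilde d_0)$ has no singular points. Concretely, for any $x \in \Sigma$ the tangent cone of $C(\Sigma)$ at the point $(1,x)$ is isometric to the metric product $\R \times T_x\Sigma$, where $T_x\Sigma$ is the tangent cone of $\Sigma$ at $x$; by Lemmata~\ref{UniformReifenbergLemma} and \ref{lemma-no-cone-pt} this tangent cone must be $\R^3$, so the splitting forces $T_x\Sigma = \R^2$, i.e.\ $x$ is a regular point of $\Sigma$. Combined with Lemma~\ref{lemma-Alex-cone-0} and the generalized Bonnet-Myers theorem, $(\Sigma,\tilde d_0)$ is therefore a compact $2$-dimensional Alexandrov space of curvature $\geq 1$ all of whose points are regular.

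Next I would invoke the classical smoothing theory for $2$-dimensional Alexandrov spaces (going back to Alexandrov-Zalgaller and Reshetnyak's theory of manifolds of bounded curvature, with later developments by Berestovskii-Nikolaev): a compact $2$-dimensional Alexandrov space of curvature $\geq 1$ with no singular points can be approximated in the Gromov-Hausdorff topology by smooth Riemannian metrics $(\Sigma,g_i')$ satisfying $\operatorname{sec}(g_i') \geq 1 - \varepsilon_i$ for some sequence $\varepsilon_i \downto 0$. To upgrade the inequality to the strict bound $\operatorname{sec} > 1$ required in the statement, I would simply rescale: setting $c_i := (1-\varepsilon_i)/(1+\sqrt{\varepsilon_i})$ and $g_i := c_i\, g_i'$, the sectional curvature of $g_i$ satisfies $\operatorname{sec}(g_i) \geq (1-\varepsilon_i)/c_i = 1 + \sqrt{\varepsilon_i} > 1$, while $c_i \to 1$ guarantees that $(\Sigma,d(g_i))$ still converges to $(\Sigma,\tilde d_0)$ in the Gromov-Hausdorff sense.

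For the ``in particular'' statement, each $(\Sigma,g_i)$ is a closed surface of strictly positive sectional curvature, so by Bonnet-Myers and the classification of closed surfaces admitting a metric of positive curvature, $\Sigma$ is diffeomorphic to $S^2$ or $\R P^2$. The second possibility is ruled out by the fact that $(M,d_0) = C(\Sigma)$ is homeomorphic to the original smooth $3$-manifold $M$ (via Lemma~\ref{SiTopThm1}), so $C(\Sigma)$ must be a topological manifold at the apex, which happens only when $\Sigma$ is a $2$-sphere.

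The main obstacle is pinpointing a precise smoothing statement in the Alexandrov literature that delivers smooth Riemannian approximations together with convergence of the lower sectional curvature bound to the bound of the limit space; once such a citation is in place, the rescaling trick used to obtain the strict inequality and the topological identification $\Sigma \cong S^2$ are both essentially routine.
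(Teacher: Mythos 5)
Your proposal is correct and follows essentially the same route as the paper: the paper gets the smooth structure from \cite[Corollary 10.10.3]{BBI}, obtains smooth approximating metrics with curvature $\geq 1$ from \cite[Lemmata 2.3, 2.4]{Itoh-Rouyer-Vilcu} — which is exactly the citation you flag as the missing ingredient, and which in fact delivers the bound $\geq 1$ directly, so your $1-\varepsilon_i$ relaxation is not needed — and then stretches the metrics to get the strict inequality and invokes Gauss--Bonnet for the sphere conclusion. Your preliminary step showing every point of $\Sigma$ is regular is not required for that smoothing result (it handles conical singularities), and your manifold-at-the-apex argument ruling out $\R P^2$ is a slightly more careful version of the paper's appeal to Gauss--Bonnet.
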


\begin{proof}
According to \cite[Corollary $10.10.3$]{BBI} and the references therein and thanks to Lemma \ref{lemma-Alex-cone-0}, $\Sigma$ is a topological surface without boundary which therefore admits a unique smooth structure: compare also the proof of \cite[Proposition $6.4$]{Lott-Ricci-pinched}. Moreover, since $(M, d_0) $ is a manifold and is a metric cone over $\Sigma$, $\Sigma$ is orientable. Now, \cite[Lemmata $2.3$, $2.4$]{Itoh-Rouyer-Vilcu} ensure that the set of smooth Riemannian metrics on $\Sigma$ with curvature bounded from below by $1$ is dense in the space of Alexandrov spaces of curvature at least $1$ with respect to the Gromov-Hausdorff topology. In particular, the Gauss-Bonnet formula applied to one member of the sequence guarantees that $\Sigma$ is a $2$-sphere and that there exists a sequence of smooth metrics $(g_i)_{i\in \N}$ on $\Sigma$ with $K_{g_i}\geq 1$ such that $(\Sigma, d(g_i))_{i\in \N}$ converges to  $(\Sigma, \tilde{d}_0)$ in the Gromov-Hausdorff sense. Moreover, by stretching the sequence $(g_i)_{i\in \N}$ by a factor $(1-\varepsilon_i)$ where $\varepsilon_i$ tends to $0$, we can assume that $K_{g_i}>1$. This ends the proof of the lemma.
\end{proof}

\begin{rmk}
Some of the proofs of the results cited in the paper  \cite[Lemmata $2.3$, $2.4$]{Itoh-Rouyer-Vilcu} may be replaced  by  alternative  proofs from \cite{Creutz-Romney-Tri}.
For example, an alternative proof to the  
  the result of Alexandrov and Zalgaller  from \cite{Alek-Zalg-Book}, of the fact that 
   any 2-dimensional manifold with bounded integral curvature
can be decomposed into a finite union of geodesic triangles whose interiors are
disjoint, can be found in \cite{Creutz-Romney-Tri}.
\end{rmk}
\begin{lemma}\label{exist-barrier-exp}
  There exists a smooth self-similar expanding solution $(\mathbb{R}^3, g_e,\nabla^{g_e}f_e)$ with bounded non-negative curvature operator which is uniformly volume non-collapsed, such that its tangent cone at infinity is $\Cone = (\Cone(\Sigma), d_0)$, i.e. for some (hence all) point $p\in\R^3$, 
$(\R^3, d_{\ep g_e},p)   \to  (\Cone(\Sigma), d_0,o)$ in the pointed Gromov-Hausdroff sense as $\ep \downto 0$. 
 \end{lemma}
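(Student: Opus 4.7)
The plan is to invoke the existence theory of expanding Ricci solitons coming out of non-negatively curved metric cones. The preceding lemmata imply that the link $(\Sigma,\tilde d_0)$ is a topological $2$-sphere and a $2$-dimensional Alexandrov space of curvature at least one, approximable in the Gromov--Hausdorff sense by smooth metrics $(\Sigma,g_i)$ with $K_{g_i}>1$. Hence the smooth metric cones $(\Cone(\Sigma),dr^2+r^2 g_i)$ have strictly positive sectional curvature away from their tip, and since the ambient dimension is three, this is equivalent to positive curvature operator there.

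Next I would smooth each of these approximating cones near its tip using the construction of \cite[Section~3.2]{PhDschlichting}, obtaining complete smooth Riemannian manifolds $(\R^3,h_i)$ with bounded non-negative curvature operator, isometric to $(\Cone(\Sigma),dr^2+r^2 g_i)$ outside a compact set. The uniform volume non-collapsing of Lott's flow (property (\ref{unif-non-coll-lott}) above) together with volume convergence on the link as $g_i \to \tilde d_0$ give a uniform positive lower bound $\AVR(h_i)\geq v_0>0$ independent of $i$. The existence result of \cite{SchulzeSimon} then produces, for each $i$, an immortal smooth Type~III Ricci flow starting at $h_i$ with bounded non-negative curvature operator, which via parabolic rescaling yields an expanding gradient Ricci soliton $(\R^3,g_{e,i},\nabla^{g_{e,i}}f_{e,i})$ whose tangent cone at infinity is $(\Cone(\Sigma),dr^2+r^2 g_i)$ and which satisfies $\AVR(g_{e,i})\geq v_0$.

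Passing to the limit $i\to\infty$ by means of Hamilton's compactness theorem, whose hypotheses are supplied by the uniform curvature bound, Shi's derivative estimates, and the uniform non-collapsing $\AVR(g_{e,i})\geq v_0$, produces a smooth pointed limit $(\R^3,g_e,\nabla^{g_e}f_e)$ that is again an expanding gradient Ricci soliton with bounded non-negative curvature operator and $\AVR(g_e)\geq v_0$. Its tangent cone at infinity is the Gromov--Hausdorff limit of the tangent cones of the $g_{e,i}$, namely the original cone $(\Cone(\Sigma),d_0)$, as required. Alternatively, the existence theorem of the first author \cite{Der-Smo-Pos-Cur-Con} delivers such a soliton directly from the singular cone $(\Cone(\Sigma),d_0)$, bypassing the approximation step entirely. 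The only genuine subtlety is ensuring that the limit expander is non-degenerate and retains the asymptotic cone structure in the strong sense stated, which is guaranteed by the uniform positive lower bound on $\AVR(h_i)$ together with the smooth convergence granted by Shi's estimates applied to the Type~III family.
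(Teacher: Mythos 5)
Your proposal is correct and follows essentially the same route as the paper: approximate the link by smooth metrics with curvature greater than one, produce expanders for the smooth cones via \cite{Der-Smo-Pos-Cur-Con} or \cite{SchulzeSimon} (after smoothing the tip as in Schlichting's thesis), establish a uniform lower bound on the asymptotic volume ratios, deduce a uniform Type III curvature bound, and pass to a smooth Cheeger--Gromov limit. The only cosmetic difference is that the paper obtains the uniform $\AVR$ bound directly from continuity of Hausdorff measure under Gromov--Hausdorff convergence of Alexandrov spaces, $\AVR(g_e^i)=\tfrac{1}{2}\vol_{g_i}(\Sigma)$, rather than routing it through the non-collapsing of Lott's flow.
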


\begin{proof}
 By \cite{Der-Smo-Pos-Cur-Con}, alternatively \cite{SchulzeSimon}, after approximating the cone by a smooth space with non-negative curvature operator as in \cite[Proposition 3.2.6]{PhDschlichting},  we know that there exists smooth expanding gradient Ricci solitons  $(\mathbb{R}^3, g^i_{e}(t))_{t\in (0,\infty)}$ with positive curvature operator with asymptotic cone  $(\Cone(\Sigma), d^i_C,o)$,  where $d_C^i $ denotes the cone metric induced by the Riemannian distance $d(g_i)$ on $\Sigma$. 
 By continuity of Hausdorff measure under pointed Gromov-Hausdorff convergence for sequences of Alexandrov spaces with curvature uniformly bounded from below (\cite[Theorem $10.10.10$]{BBI}), the asymptotic volume ratios 
 \begin{equation*}
 \AVR(g^i_e)\coloneqq \lim_{r\rightarrow+\infty}\vol_{g^i_e}(B_{g^i_e}(p_i,r))/r^3=\frac{1}{2}\vol_{g_i}(\Sigma),\quad p_i\in \R^3,
 \end{equation*}
   are uniformly bounded from below by a positive constant $V_0$ say. Now, this implies that there is a uniform bound, $|\Rm(g^i_e(t))|_{g^i_e(t)} \leq \frac{B_0(V_0)}{t}$  on the curvature of the metrics $g^i_e(t),$ where $B_0(V_0)$ is a constant depending on $V_0,$ as explained in \cite{SchulzeSimon} or in the proof of \cite[Theorem $4.7$]{Der-Asy-Com}. This allows us to take a limit in the smooth Cheeger-Gromov topology as $i\to \infty$ to obtain an expanding gradient Ricci soliton $(\R^3,g_e)$. Moreover, as explained in the same aforementioned references, the asymptotic cone of $(\R^3,g_e)$ must be $(\Cone(\Sigma),d_0)$. 
\end{proof}
We are now in a position to prove Hamilton's conjecture in dimension $3$:

\begin{proof}[Proof of Theorem \ref{thm:main.2}]
 Assume $(M^3,g)$ is a complete Riemannian manifold with non-negative uniformly pinched bounded Ricci curvature. Assume furthermore that $M^3$ is non-compact and non-flat. 
  As explained in the beginning of this section this implies that there exists an immortal solution to Ricci flow, $(M^3_1,g_1(t))_{t\in (0,\infty)}$  satisfying \eqref{type-iii-lott}, \eqref{ric-pinch-lott} and \eqref{unif-non-coll-lott}, whose singular initial condition, achieved in the distance sense, is by Lemma \ref{lemma-Alex-cone-0} a metric cone $(\Cone,d_1(0),o)$ over an Alexandrov space $(\Sigma, \ti d_1(0))$ with curvature larger than or equal to $1$. By Lemma \ref{lemma-no-cone-pt}, the cone $(\Cone,d_1(0),o)$ satisfies \eqref{ReifenbergCond} away from $o$. Moreover, Lemma \ref{exist-barrier-exp} ensures the existence of a self-similar solution $(M_2^3,g_2(t),\hat o)_{t\in (0,\infty)}$ converging to  $(\Cone,  d_2(0),\hat o)$ in the distance sense as $t$ goes to $0$ and which is an expanding gradient Ricci soliton with non-negative curvature operator. Here $\hat o$ is the unique critical point of the potential function associated to the self-similar solution $(M_2^3,g_2(t))_{t\in (0,\infty)}$ which corresponds to the minimum of this function. Pointing at $\hat o$ is crucial as $(M_2^3,g_2(t)=t\Phi_t^*g_2,\hat o)$ is isometric to $(M_2^3,t\cdot g_2,\hat o)$ as pointed metric spaces for each $t>0$ through the diffeomorphisms $(\Phi_t)_{t>0}$ generated by $-\nabla^gf/t$ such that $\Phi_1=\operatorname{Id}_M$. Moreover, the cone $(\Cone,  d_2(0),\hat o),$ is isometric to $(\Cone,d_1(0),o)$ through some map $\psi_0$, where  $(\Cone,   d_2(0),\hat o)$ is  a cone over $(\Sigma,   \ti d_2(0))$  where
  $(\Sigma,  \ti d_2(0) )$ is isometric to  $(\Sigma,  \ti d_1(0) )$.
 
 Let $x_0\in \Cone$ such that $d_1(0)(o,x_0)\geq 2$ so that $B_{d_1(0)}(x_0,1)\subseteq \Cone\setminus\{o\}$. Then Theorem \ref{thm:main.1} guarantees that there exist $0<T_0<1,$ $0<R_0<1$ and solutions $F_1: B_{d_1(0)}(x_0,\frac{3}{2}  R_0) \times (0,T_0) \to \R^3,$  $F_2: B_{d_2(0)}(\psi_0(x_0),\frac{3}{2} R_0) \times (0,T_0) \to \R^3$ such that $ \ti g_1(t)=  (F_1(t))_{*}g_1(t)$  and $ \ti g_2(t)=  (F_2(t))_{*}g_2(t)$, $t\in(0,T_0]$, are solutions to the  $\de$-Ricci-De Turck flow
 on  $\B(0,R_0)$
   $i=1,2$, which are  $\varepsilon_0$-close to the $\delta$ metric and such that there exist $C_{0}>0$  such that if $t\in(0,T_{0}]$ and $k=0,1,2$:
  \begin{equation}
|D^k\left(\ti g_1(t) -\ti g_2(t)\right)|_{\de}\leq \exp\left(-\frac{C_0}{t}\right),\quad \text{on $\B(0,R_0).$}\label{conv-rate-exp-two-sol-ham}
\end{equation}
In the following $C_0$ denotes a   constant, which  may change from line to line, but remains positive. 
Remembering that $(\ti g_1(t))_{t\in(0,T_0)}$ is uniformly Ricci-pinched, we see that     \eqref{conv-rate-exp-two-sol-ham} implies: 
\begin{equation*}
\Rc(\ti g_2(t))\geq \lambda_0\Sc_{\ti g_2(t)}\ti g_2(t)-\exp\left(-\frac{C_0}{t}\right)\ti g_2(t),\quad t\in(0,T_{0}],
\end{equation*}
 on $\B(0,R_0)$ 
for some uniform $\lambda_0>0$.

Since $\ti g_2(t)=(F_2(t))_{\ast}g_2(t)$ with $F_2(t)$ $(1\pm\varepsilon_0)$ bi-Lipschitz, the same estimate holds on $B_{d_2(0)}\left(\psi_0(x_0) = \hat x_0,R_0/2\right):$
\begin{equation*}
\Rc(g_2(t))\geq \lambda_0\Sc_{g_2(t)} g_2(t)-\exp\left(-\frac{C_0}{t}\right) g_2(t),\quad t\in(0,T_0].
\end{equation*}

 Since the solution $g_2(t)=t\Phi_t^*g_2(1)$ with $\partial_t\Phi_t=-t^{-1}\nabla^{g_2}f\circ \Phi_t$ and $\Phi_t|_{t=1}=\operatorname{Id}_M$,   where $f:M\rightarrow\R$ is the associated potential function, the previous estimate can be further simplified to:
\begin{equation*}
\Rc(g_2(1))\geq \lambda_0\Sc_{g_2(1)} g_2(1)-\exp\left(-\frac{C_0}{t}\right) g_2(1),
\end{equation*}
on $\Phi_t\left(B_{d_2(0)}\left(\hat x_0,R_0/2\right)\right)$, $t\in(0,T_{0}]$. 

Without loss of generality, we can assume $\hat x_0  $ to lie in the annulus $A_{d_2(0)}(\hat o,2,4)=\{x\in\Cone\,|\, 2\leq d_2(0)(\hat o,x)\leq 4\}$ so that the previous estimate holds on $\Phi_t(A_{d_2(0)}(\hat o,1,5))$, $t\in(0,T_{0}]$, by reducing $T_{0}$ if necessary by a compactness argument. We can also assume $\hat o$ to be the unique critical point of $f$. 

 We claim that there exists $c_1>0$ and $c_2>0$ such that for $t\in(0,T_{0}]$, 
\begin{equation}\label{inclusion-final}
\left[\frac{c_1^{-1}}{t},\frac{c_1}{t}\right]\subseteq f\left(\Phi_t\left(A_{d_2(0)}(\hat o,1,5)\right)\right)\subseteq\left[\frac{c_2^{-1}}{t},\frac{c_2}{t}\right].
\end{equation}

 Indeed, by invoking the bounds of $f$ in terms of $d(g_2(1))(\hat o,\cdot)$ given in \eqref{low-bd-f} and \eqref{upp-bd-f} and since $\hat o$ is fixed by the diffeomorphisms $\Phi_t$, $t\in(0,T_0]$, we see, using $g_2(t)=t\Phi_t^*g_2(1),$ that 
\begin{equation*}
\min_M f+\frac{d(g_2(t))(\hat o,x)^2}{4t}\leq f(\Phi_t(x))\leq \left(\sqrt{\min_Mf}+\frac{d(g_2(t))(\hat o,x)}{2\sqrt{t}}\right)^2,\quad t>0.
\end{equation*}

On the other hand, thanks to distance distortion estimates from \eqref{DistCond},
\begin{equation*}
\begin{split}
\sqrt{f(\Phi_t(x))}&\leq \sqrt{\min_Mf}+\frac{d_2(0)(\hat o,x)}{2\sqrt{t}},\\
\frac{d_2(0)(\hat o,x)}{2\sqrt{t}}&\leq \frac{d(g_2(t))(\hat o,x)+c_0\sqrt{t}}{2\sqrt{t}}\leq \sqrt{f(\Phi_t(x))-\min_M f}+\frac{c_0}{2},\quad t>0,
\end{split}
\end{equation*}
where $c_0$ is a positive constant uniform in space and time.

In particular, if $\gamma\in f\left(\Phi_t\left(A_{d_2(0)}(\hat o,1,5)\right)\right)$ then, for all $t>0$,
\begin{equation*}
\frac{1}{2\sqrt{t}}-\frac{c_0}{2}\leq \sqrt{\gamma-\min_M f}\leq \sqrt{\gamma}\leq \sqrt{\min_Mf}+\frac{5}{2\sqrt{t}}.
\end{equation*}
This implies the second inclusion in \eqref{inclusion-final} for a suitable positive $c_2$ for all $t\in(0,T_0]$ by reducing $T_0$ if necessary.

Now, if $\gamma\in \left[\frac{c_1^{-1}}{t},\frac{c_1}{t}\right]$ for some positive constant $c_1$, since $f$ is proper and bounded from below, there must be some point $y\in M$ such that $f(y)=\gamma$. Moreover, since $\Phi_t$ is a diffeomorphism for all $t>0$, $y=\Phi_t(x)$ for some unique $x\in M$ if $t>0$ is fixed and it must satisfy:
\begin{equation*}
\begin{split}
\sqrt{\gamma}- \sqrt{\min_Mf}&\leq\frac{d_2(0)(\hat o,x)}{2\sqrt{t}}\leq \sqrt{\gamma-\min_M f}+\frac{c_0}{2}.
\end{split}
\end{equation*}
This implies the first expected inclusion in \eqref{inclusion-final} for a suitable positive constant $c_1$ for all $t\in(0,T_0]$ by reducing $T_0$ again if necessary.
 This shows that \eqref{alm-ric-pin} in Proposition \ref{prop-rig-alm-ric-pin-sol} holds, i.e.
\begin{equation*}
\Rc(g_2(1))\geq \lambda_0\Sc_{g_2(1)} g_2(1)-\exp\left(-C_0f\right) g_2(1),\quad \text{on $M$.}
\end{equation*}
As a conclusion, [\ref{rig-low-dim}, Proposition \ref{prop-rig-alm-ric-pin-sol}] implies that $(M^3,g_2)$ is isometric to Euclidean $3$-space which in turn implies that the cone $(\Cone,d_2(0))$ is itself isometric to Euclidean $3$-space. Therefore, the asymptotic volume ratio $\operatorname{AVR}(g_2)\coloneqq \lim_{r\rightarrow +\infty}r^{-3}\vol_{g_2}(B_{g_2}(p,r))$ (independent of the base point $p\in M$) equals the volume $\omega_3$ of the unit ball $\B(0,1)\subseteq\R^3$. Now $\operatorname{AVR}(g_2)=\operatorname{AVR}(g_1)$ since the two corresponding solutions share the same cone at $t=0$ (together with Colding's continuity of the volume), one gets $\operatorname{AVR}(g_1)=\omega_3$ which implies that $(M^3,g_1)$ is isometric to Euclidean $3$-space by the rigidity part of Bishop-Gromov volume comparison. This ends the proof of Theorem \ref{thm:main.2}.
\end{proof}
\newpage 

\begin{appendix}
 \section{Volume and distance convergence}
 We recall the following result of Simon-Topping. 

\begin{lemma}[\textrm{Simon-Topping, \cite[Lemma 3.1]{SiTo2}}] \label{SiTopThm1}
Let $(M^n,g(t))_{t\in (0,T)}$, $T\leq 1$, be a smooth Ricci flow, satisfying $\Rc(g(t)) \geq -g(t) , \quad |\Rm(g(t))| \leq c_0/t,$ where 
 $M$ is connected but $(M^n,g(t))$ not  necessarily complete. Assume furthermore that $B_{g(t)}(x_0,1)$ is compactly contained in $M$ for all $t\in (0,T)$.    
 
Then $X\coloneqq  (\cap_{s \in (0,T)} B_{g(s)}(x_0,\frac 1 2))$ is non-empty and there is a well defined limiting metric $d_t \to d_0$ as $t \downto 0$, where 
\begin{eqnarray}
&&e^td_ 0  \geq d_t  \geq d_0 -\ga(n)  \sqrt{c_0 t} \ \ \text{ for all } t\in [0,T)  \text{ on } X.  \label{distestinit} 
\end{eqnarray}

Furthermore, there exists   $R= R(c_0,n)>0, S = S(c_0,n) >0$ such that
$B_{d_0}(x_0,r) \Subset \curlX \subseteq X$  
and $B_{g(t)}(x_0,r) \Subset \curlX \subseteq X $ for all $r \leq R(c_0,n)$ and $t \leq S$ where $\curlX$ is the connected component of $X$ which contains $x_0$, and the topology 
of $ B_{d_0}(x_0,r)$ induced by $d_0$ agrees with that of the set $  B_{d_0}(x_0,r)\subseteq M$ induced by the topology of $M$. 
 \end{lemma}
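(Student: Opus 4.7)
\medskip

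\noindent\textbf{Proof proposal.} The plan is to establish the two-sided evolution estimate for $d_t$ first, use it to show the Cauchy property of $t \mapsto d_t$ as $t\downto 0$ on an appropriate subdomain, and then deduce the containment and topological statements as consequences. The two evolution estimates have very different natures, so I would treat them separately.

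For the upper bound $d_t \leq e^{t-s} d_s$, the Ricci lower bound $\Rc(g(t))\geq -g(t)$ gives $\partial_t g(t) = -2\Rc(g(t)) \leq 2 g(t)$ pointwise, so $g(t) \leq e^{2(t-s)} g(s)$ on any set where both metrics are defined, and length comparison of curves yields $d_t \leq e^{t-s} d_s$ immediately. For the reverse direction $d_t \geq d_s - \gamma(n)\sqrt{c_0(t-s)}$, I would use the Hamilton--Perelman distance distortion lemma: if a minimizing $g(s)$-geodesic $\sigma$ joining $x$ and $y$ stays in a region where $\Rc(g(\tau))\leq K g(\tau)$ for $\tau \in [s,t]$, then the forward difference quotient of $\tau\mapsto d_\tau(x,y)$ is bounded below by $-c(n)\sqrt{K}$. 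Applying this with $K = c_0/\tau$ and integrating in $\tau$ over $[s,t]$ is integrable near zero and produces precisely $-\gamma(n)\sqrt{c_0(t-s)}$.

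The main obstacle, and the reason one cannot prove this on the full ball $B_{g(t)}(x_0,1)$ uniformly in $t$, is that the distance distortion lemma requires the candidate geodesic (and a small tube around it at later times) to remain in the region where the curvature bound applies. Since the ball $B_{g(t)}(x_0,1)$ may shrink or move as $t$ varies, I would run a continuity/bootstrap argument: start from the assumption that $B_{g(\tau)}(x_0,1) \Subset M$ for $\tau \in (0,T)$, choose $R(c_0,n)$ and $S(c_0,n)$ small so that for any $t \leq S$ and $y$ with $d_t(x_0,y) \leq R$, the $g(s)$-minimizing segment from $x_0$ to $y$, for every $s\in (0,t]$, stays inside $B_{g(s)}(x_0,1/2)$; the upper bound together with the two-sided distance distortion on the safe tube can be iterated over dyadic time steps to justify this. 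Taking $\limsup$ and $\liminf$ and combining with the upper bound shows $d_t$ is Cauchy on $\mathcal{X}$, defining $d_0 := \lim_{t\downto 0} d_t$ with the claimed quantitative bounds \eqref{distestinit}.

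Once $d_0$ exists with the two-sided comparison to $d_t$ on $\mathcal{X}$, the remaining statements are mostly formal. For the ball containment $B_{d_0}(x_0,r)\Subset \mathcal{X}$ and $B_{g(t)}(x_0,r)\Subset \mathcal{X}$, the inequality $d_t \leq e^T d_0$ shows that any $y$ with $d_0(x_0,y) < r \leq R(c_0,n)$ satisfies $d_\tau(x_0,y) < 1/2$ for all $\tau\in(0,S]$ once $R$ and $S$ are chosen small enough; combined with the connectedness of minimizing segments and the hypothesis that $B_{g(\tau)}(x_0,1)\Subset M$, this places $y$ in the connected component $\mathcal{X}$ of $x_0$. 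Finally, the topology agreement on these balls follows because $d_0$ is uniformly comparable to $d_t$ up to an additive $O(\sqrt{t})$ term, so they induce the same collection of convergent sequences, and $d_t$ induces the manifold topology since $g(t)$ is a smooth Riemannian metric on $M$. The hard part really is just the localization in the second paragraph; everything else is bookkeeping from the quantitative distance estimates.
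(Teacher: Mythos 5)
The paper itself contains no proof of this lemma: it is stated in Appendix A as a quoted result of Simon--Topping \cite[Lemma 3.1]{SiTo2}, so there is no internal argument to compare against. Your sketch follows the same route as the cited source: the upper bound $d_t\le e^{t-s}d_s$ from $\partial_t g=-2\Rc(g)\le 2g$ via length comparison, the lower bound from the Hamilton--Perelman changing-distances lemma with $K=c_0/\tau$ (whose $\tau^{-1/2}$ rate is integrable in time and produces exactly the $\gamma(n)\sqrt{c_0 t}$ term), and a localization argument keeping minimizing geodesics inside the region where the hypotheses apply; the deduction of the containments and the agreement of topologies from the two-sided estimate is indeed routine. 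The one place your write-up is materially thinner than the actual proof is the bootstrap in your second paragraph: the precise mechanism in \cite{SiTo2} is the ``shrinking balls'' / ``expanding balls'' lemmas, which break the apparent circularity (distance control is needed to confine the geodesics, and confined geodesics are needed for the distance control) by tracking the first exit time of a point from a nested family of balls, starting from a positive time $s$ and letting $s\downto 0$, rather than by dyadic time-stepping. Your continuity argument can be made to close, but as written it asserts rather than verifies that the iteration is self-consistent; that is the step to make quantitative if you flesh this out.
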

 
 In the setting of solutions $(M^n,g(t))_{t\in (0,T)}$ with $\Rc(g(t)) \geq -1$ and curvature bounded by $c_0/t$
 which are Reifenberg, at time zero,  one    has  a  Pseudolocality type theorem, even when $(M^n,g(t))$ is not necessarily complete.
 This is in contrast to the general setting, where completeness is required, as can be seen by the example of Giesen-Topping \cite{Gie-Top-RF-Unbded}.
 \begin{lemma}[Pseudolocality for Reifenberg regions]{\label{pseudorefienberg}}
  Let $(M^n,g(t))_{t\in (0,T)}$, $T\leq 1$, be a smooth Ricci flow, satisfying $\Rc(g(t)) \geq -g(t) , \quad |\Rm(g(t))| \leq c_0/t$, where 
 $M$ is connected but $(M^n,g(t))$ not  necessarily complete. Assume furthermore that $B_{g(t)}(x_0,1)$ is compactly contained in $M$ for all $t\in (0,T),$ and let $X\coloneqq  (\cap_{s \in (0,T)} B_{g(s)}(x_0,\frac 1 2))$ be as in \ref{SiTopThm1} endowed with the metric $d_0$.
 Assume that $ B_{d_0}(x_0,r)$ is as in Lemma \ref{SiTopThm1}, and furthermore that all tangent cones of $ B_{d_0}(x_0,r)$ are isometric to  $(\R^n,d(\de)).$ 
Let $\ep>0$ be given. Then there exists a $\hat \de>0$ depending on $\ep,n,x_0$ and   the solution, such that if 
$0<t\leq \hat\de$ then 
\begin{equation*}
|\Rm(g(t))| \leq  \frac{\ep}{t},\quad\text{ on  $ B_{d_0}\left(x_0,\tfrac r 4\right)$.}
\end{equation*} 
 \end{lemma}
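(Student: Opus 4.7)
I would argue by contradiction through a parabolic blow-up. Assume the conclusion fails: there exist $\ep_0>0$ and sequences $t_i \downarrow 0$ and $x_i \in B_{d_0}(x_0, r/4)$ with $|\Rm(g(t_i))|(x_i) > \ep_0/t_i$. Passing to a subsequence, $x_i \to x_\infty \in \overline{B_{d_0}(x_0,r/4)}$, which is compactly contained in $X$ by Lemma \ref{SiTopThm1}. I introduce the parabolic rescalings $g_i(s) := t_i^{-1} g(t_i s)$, which are smooth Ricci flows defined for $s\in (0, T/t_i)$ on neighborhoods of $x_i$ whose $d_0$-radius is of order $r/\sqrt{t_i}\to \infty$, and which satisfy $|\Rm(g_i(s))|\leq c_0/s$, $\Rc(g_i(s))\geq -t_i\, g_i(s)$, together with $|\Rm(g_i(1))|(x_i)\geq \ep_0$.

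Next I would extract a smooth pointed limit $(N^n, g_\infty(s), x_\infty)_{s>0}$ via Hamilton's compactness theorem and derive a contradiction via Perelman's Pseudolocality applied to the limit. For the compactness step, $|\Rm(g_i(s))|\leq c_0/s$ gives uniform local curvature control on any parabolic cylinder $B\times[s_1, s_2]$ with $s_1>0$; the required injectivity radius bound at $(x_i, 1)$ comes from volume non-collapsing. Since the tangent cone of $(X, d_0)$ at $x_\infty$ is $(\R^n, d(\delta))$, we have $\rh^{-n}\Haus^n_{d_0}(B_{d_0}(x_\infty, \rh))\to \omega_n$ as $\rh\downarrow 0$; combined with Cheeger--Colding volume continuity applied to the smooth spaces $(M, g(t))$ as $t\downarrow 0$ (using the lower Ricci bound) and the distance distortion of Lemma \ref{SiTopThm1}, this yields almost Euclidean volume ratios for unit balls of $g_i(1)$ around $x_i$; the Cheeger--Gromov--Taylor theorem then supplies the injectivity radius lower bound. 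Because the rescaled domains exhaust, the smooth limit $(N, g_\infty(s))$ is complete for every $s>0$, and satisfies $|\Rm(g_\infty(s))|\leq c_0/s$, $\Rc(g_\infty(s))\geq 0$, and $|\Rm(g_\infty(1))|(x_\infty)\geq \ep_0$.

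By construction, $(N, d(g_\infty(s)), x_\infty)\to (\R^n, d(\delta), 0)$ in the pointed Gromov--Hausdorff topology as $s\downarrow 0$, so Cheeger--Colding volume convergence gives $\vol_{g_\infty(s)}(B_{g_\infty(s)}(x_\infty, R))\to \omega_n R^n$ as $s\downarrow 0$ for each fixed $R>0$. Combined with $\Rc(g_\infty(s))\geq 0$ and \cite[Corollary 1.3]{Cav-Mon}, this gives almost Euclidean isoperimetric profiles on $B_{g_\infty(s_0)}(x_\infty, R)$ for $R$ large and $s_0>0$ sufficiently small. Perelman's Pseudolocality \cite[Theorem 10.1]{P1}, applicable because $g_\infty$ is smooth, complete, and has bounded curvature on $[s_0, s_0+R^2]$, then provides $|\Rm(g_\infty(s))|(x_\infty)\leq \eta/(s-s_0)$ for $s-s_0\leq \varepsilon R^2$, with $\eta=\eta(R, s_0)\to 0$ as $R\to\infty$ and $s_0\to 0$. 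Choosing $R$ large and $s_0$ small so that $1\in(s_0, s_0+\varepsilon R^2)$ yields $|\Rm(g_\infty(1))|(x_\infty)< \ep_0$, a contradiction.

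The main obstacle, relative to the complete case treated in Lemma \ref{lem:curv_decay}, is the extraction of the blow-up limit without appealing to global $\kappa$-non-collapsing of the original (possibly incomplete) flow. The Reifenberg hypothesis is precisely what restores enough volume control to make both steps work: it provides the volume non-collapsing needed for the Cheeger--Gromov--Taylor injectivity bound during the Hamilton compactness argument, and it identifies the initial GH limit of the rescaled flow as $\R^n$, which supplies the almost Euclidean isoperimetric inequality at small positive time required to invoke Pseudolocality on the complete limit. The second part of the conclusion, involving $|\nabla^{g(t),k}\Rm(g(t))|$, would then follow as in \cite[Lemma 3.1]{Der-Sch-Sim} from interior Bernstein--Shi estimates, exactly as in the complete case.
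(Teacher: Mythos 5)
Your overall architecture (contradiction, parabolic blow-up at scale $\sqrt{t_i}$, identification of the limit's initial condition as Euclidean, conclusion on the complete limit) has the right shape, and your endgame differs from the paper's in a legitimate way: you invoke Perelman's pseudolocality on the complete blow-up limit, whereas the paper shows the limit is exactly static flat $\R^n$ (via Hochard's splitting, Proposition \ref{prop:Hochard-splitting}, or alternatively via Cheeger--Colding volume convergence plus the rigidity case of Bishop--Gromov), which contradicts $|\Rm(g_\infty(1))|(x_\infty)\geq\ep_0$ directly. Either endgame works once the limit has been correctly identified.

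The genuine gap is in that identification. You justify ``$(N,d(g_\infty(s)),x_\infty)\to(\R^n,d(\delta),0)$ as $s\downarrow 0$'' and the ``almost Euclidean volume ratios around $x_i$'' solely from the tangent cone of $(X,d_0)$ at the single accumulation point $x_\infty$. This is insufficient in the regime $d_0(x_i,x_\infty)/\sqrt{t_i}\to\infty$: writing $\rho_i=d_0(x_i,x_\infty)$, the bound $\rho^{-n}\Haus^n_{d_0}(B_{d_0}(x_\infty,\rho))\geq(1-\delta)\omega_n$ together with $B_{d_0}(x_i,2\rho_i)\supseteq B_{d_0}(x_\infty,\rho_i)$ and Bishop--Gromov only yields $s^{-n}\Haus^n_{d_0}(B_{d_0}(x_i,s))\geq(1-\delta)2^{-n}\omega_n$ for $s\leq 2\rho_i$ --- a definite, but not almost-Euclidean, lower bound --- and it gives no control at all on the Gromov--Hausdorff distance of $B_{d_0}(x_i,\sqrt{t_i})$ to a Euclidean ball. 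So your blow-up centered at $x_i$ need not converge to $(\R^n,d(\delta))$ on the strength of what you have stated; this is enough for the Cheeger--Gromov--Taylor injectivity bound and hence for compactness, but not for the almost-Euclidean isoperimetric input to pseudolocality. To close the gap you must either (a) upgrade to the uniform Reifenberg property on $\overline{B_{d_0}(x_0,r/4)}$, using that \emph{all} tangent cones (not just the one at $x_\infty$) are Euclidean --- but Lemma \ref{UniformReifenbergLemma} as stated requires complete approximating manifolds, which is precisely what is unavailable here --- or (b) do what the paper does: split into the cases $d_0(x_i,x_\infty)^2\geq At_i$ and $d_0(x_i,x_\infty)^2/t_i\to 0$, blow up first at scale $d_0(x_i,x_\infty)$ centered at $x_\infty$ (so that only the tangent cone at $x_\infty$ is needed), conclude that this first limit is static flat, and then harvest the almost-Euclidean volume ratios at $x_i$ from the flatness of that first limit before performing the second blow-up at scale $\sqrt{\ti t_i}$ (the paper's Case 1.2). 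As written, your single-blow-up argument skips exactly the step that this two-scale analysis is designed to supply.
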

 \begin{proof}
 Let $ \ep >0$ be fixed. 
Assume we find a sequence of points $y_i \in    B_{d_0}(x_0,r/4)$   
and times  $0<t_i  \to 0$ as $i \to \infty$  so that 
$t_i|\Rm(g(t_i))|(y_i)  \in [\ep,c_0].$

After taking a subsequence  we may assume without loss of generality that  $y_i \to z \in  B_{d_0}(x_0,r/2).$
We consider different  cases, Case 1.2 being the most difficult. \\[2ex]
{\bf Case 1: $d^2_0(y_i,z) \geq A t_i$ for some $A >0$ for all $i\in \N$.}\\[1ex]
We scale solutions and initial data so that $\ti d_i(0) :=  \la_i   d_i(0),$ for $\la_i := (d_0(y_i,z))^{-1}   \to \infty $ so that
$\ti d_i(0)(y_i,z) =1,$ and $(M,\ti d_i(0), z)) \to (\R^n,d(\delta),0)$    in the Gromov-Hausdorff sense  as $i\to \infty.$ 
Then  the new times  $\ti t_i:= t_i \la^2_i$ for the scaled solution  $ (M, \ti g_i(\ti t) = \la^2_i g(\la^{-2}_i\ti t))_{\ti t\in (0,\lambda_i^{2})}$ satisfy $0<\ti t_i = 
 (\ti d_i(0)(y_i,z))^{-2}\ti t_i =   (d_i(0)(y_i,z))^{-2} t_i  \leq A^{-1} < \infty.$
 In particular,  $(M,\ti d_i(0),z)  $ approaches  $(\R^n,d(\delta),0)$ in the Gromov-Hausdorff sense as $i\to \infty,$ and 
the solution $ (M, \ti g_i(  t),z)_{t\in (0,\lambda_i^{2})}$  (we denote  the variable $\ti t$ by $t$ again, for ease of reading) approaches   
a  solution $(X,h(t),z)_{t\in (0,\infty)}$  with $\Rc(h(t)) \geq 0,$ $|\Rm(h(t))|\leq c_0/t,$ and  
 \begin{equation}\label{dist-est-app-A}
 d_s \geq d_t \geq d_s  -c\sqrt{t-s}, \quad 0\leq s\leq t,
 \end{equation}  
such that  $(X,d(h(t)),z) \to  (\R^n,d(\delta),0)$ in the Gromov-Hausdorff sense  as $t\downto 0$.


Scaling the solution by $\eta_i\downto 0$ leads to solutions   also coming out of  $(\R^n,d(\de))$, and hence the volume convergence theorem of Cheeger-Colding,   \cite[Theorem 9.45]{Cheeger_notes}  with the distance estimates \eqref{dist-est-app-A}, shows for the solution $(X,h(t))$  for any  fixed $t>0$, that  
 $ r^{-n}\vol_{h(t)}(B_{h(t)}(x,r)) \to \omega_n$ as $ r\to \infty,$ and so Bishop-Gromov volume comparison tells us 
  that  $r^{-n}\vol_{h(t)}(B_{h(t)}(x,r))= \omega_n$   for all $r>0$ which implies the solution is isometric to $(\R^n,d_{\de})$ (by the equality case of Bishop-Gromov volume comparison).  
  From the construction, we know that $\ti t_i \leq A^{-1}$.

Note that  $\ti t_i|\Rm(\ti g_i(\ti t_i))|(y_i)  = t_i|  \Rm(  g( t_i))|(y_i) \in [\ep_0,c_0].$\\[2ex] 
{\bf Case 1.1: For a subsequence of $ \ti t_i$ we have   $ \ti t_i \to \tau>0,$ as $i\to \infty$. }\\[1ex]
Then, 
\begin{equation*}
 |  \Rm(\ti g_i(\ti t_i))|(y_i)=   \frac{1}{\ti t_i} ( \ti t_i |  \Rm(\ti g_i(\ti t_i))|(y_i) ) \in   \left[ \ep_0/\ti t_i, c_0/ \ti t_i\right]
 \to  \left[\ep_0/\tau, c_0/\tau \right],
 \end{equation*}
leads to a contradiction,  since  the limiting solution is flat.\\[2ex]
{\bf Case 1.2: $\ti t_i \to 0$ as $i\to \infty$. }\\[1ex] 
For $\eta>0$, we can find $ N(\eta)\in \N$ large  so that 
 \begin{equation*}
 \vol_{\ti g_i(\eta)}(B_{\ti g_i(\eta)}(y_i,1)) \geq \omega_n (1-\eta),\quad\text{ for all $i \geq N(\eta)$.}
 \end{equation*}
  In the following, $\gamma(\eta)>0$ can change from line to line, but always satisfies  $\gamma(\eta) \to 0$ as $\eta \to 0$.
 Hence, with the help of Bishop-Gromov volume comparison and the fact that  $\Rc(\ti g_i(t)) \geq-\ep(i)$ for all $t>0$  (in particular for $t=\eta$), where $\ep(i) \to 0$ as $i\to \infty,$ we see 
 $\vol_{\ti g_i(\eta)}(B_{\ti g_i(\eta)}(y_i,\ell)) \geq \omega_n (1-\gamma(\eta))\ell^n$ for all $i \geq N(\eta)$, $\ell \in (0,1),$ after adjusting $N(\eta)$ if necessary.
 Using the distance estimates \eqref{dist-est-app-A},  we know for arbitrary $r\in (0,\eta)$ that 
 $B_{\ti g_i(r)}(y_i,1 ) \supseteq B_{\ti g_i(\eta)}(y_i,1-\gamma(\eta))$
 and hence, 
 \begin{eqnarray*}
 &&   \vol_{\ti g_i(\eta)}(B_{\ti g_i(r)}(y_i,1)) \geq \vol_{\ti g_i(\eta)}(B_{\ti g_i(\eta)}(y_i,1-\gamma(\eta) ))   \geq \omega_n (1-\gamma(\eta)) 
  \end{eqnarray*}
   for   $i\geq   N$.
For arbitrary $r\in (0,\eta)$, let $\Omega:= B_{\ti g_i(r)}(y_i,1) $ for $i\geq   N(\eta).$ 
Using $\Sc_{\ti g_i(t)} \geq -\ep(i),$  with $\ep(i) \to 0$ as $i\to \infty$ and 
\begin{equation*}
\frac{d}{dt} \int_{\Omega}\,d\mu_{\ti g_i(t)}= -\int_{\Omega} \Sc_{\ti g_i(t)}\,d\mu_{\ti g_i(t)} \leq \ep(i) \int_{\Omega} \,d\mu_{\ti g_i(t)},
\end{equation*}
  we see that 
  \begin{equation*}
  \omega_n (1-\gamma(\eta)) \leq \int_{\Omega}\,d\mu_{\ti g_i(\eta)} \leq e^{\ep(i)\eta} \int_{\Omega}\,d\mu_{\ti g_i(r)} ,
  \end{equation*}
   that is
$   \vol_{\ti g_i(r)} (B_{\ti g_i(r)}(y_i,1)) \geq  \omega_n (1-\gamma(\eta))$ for arbitrary $r\in (0,\eta)$, $i \geq N(\eta).$  
Once again, $\Rc(\ti g_i(t)) \geq -\ep(i)$ for all $t>0$ coupled with Bishop-Gromov volume comparison tells us that 
 $\vol_{\ti g_i(r)} (B_{\ti g_i(r)}(y_i,\ell)) \geq  \omega_n (1-\gamma(\eta))\ell^n$ for all $\ell \leq 1,$ for all $i\geq N(\eta),$ where we assume without loss of generality, $\ep(i) \leq \eta$  for all $i\geq N(\eta).$
 Scaling the solution once more by $\hat g_i(t) = \ti t_i^{-1}\ti g_i(\ti t_i t)$, we see on the one hand that $| \Rm(\hat g_i(1))|(y_i) \in [\ep,c_0]$ and on the other hand,   $\vol_{\hat g_i(t)} (B_{\hat g_i(t)}(y_i,\ell)) \geq  \omega_n (1-\gamma(\eta))\ell^n$ for all  $t\in (0,\eta{\ti t_i}^{-1}),$   for all $i \geq N(\eta).$  Taking a subsequence if necessary,   we can assume that
   $(M, \hat g_i(t) ,y_i)_{t\in (0,T_i)}  \to (Y,\hat g(t),p)_{t\in (0,\infty)}$ as $i\to \infty$ where  $T_i:=\eta \ti t_i^{-1}$. Moreover, $ (Y,\hat g(t),p)$ satisfies $\Rc(\hat g(t)) \geq 0$, $|\Rm(\hat g(t))|\leq c_0/t$ for all $t>0$ and   $   \vol_{\hat g(t)} (B_{ \hat g(t)}(p,\ell))  \geq  \omega_n (1-\gamma(\eta))\ell^n$ for all $l>0$, $t>0$. 
Thus letting $\eta \to 0$ we have  $\vol_{\hat g(t)} (B_{ \hat g(t)}(p,\ell))  \geq  \omega_n\ell^n,$ and hence Bishop-Gromov volume comparison implies 
   $   \vol_{\hat g(t)} (B_{ \hat g(t)}(p,\ell))  =\omega_n \ell^n,$ for all $l>0$ and so  
 $ (Y,\hat g(t),p)$ is flat and isometric to $(\R^n,\de,0)$ for each   $t\in (0,\infty)$. 
 This is a contradiction  to $|\Rm(\hat{g_i}(1))|(y_i) \in [\ep,c_0]$.\\[2ex]
{\bf Case 2: There exists a subsequence  of $((y_i,t_i))_{i \in \N}$  so that $d^2_0(y_i,z)/ t_i\to 0$ as $i\to \infty$.   }\\[1ex]
We scale solutions and initial data so that $\ti d_i(0) :=  \la_i   d(0),$ for $\la_i := t_i^{-1/2}  $ so that
$\ti d^2_i(0)(y_i,z) = d^2_0(y_i,z)/t_i\to 0 $. Since by assumption, $(M,\ti d_i(0), z)_i \to (\R^n,d(\delta),0)$    in the Gromov-Hausdorff sense  as $i\to \infty,$ so does the sequence $(M,\ti d_i(0), y_i)_i$. Then  the new times  $\ti t_i:=  t_i  \la^2_i  $   for the scaled solution  $ (M, \ti g_i(\ti t) = \la^2_i g(\la^{-2}_i\ti t ))_{\ti t \in (0, \lambda_i^2)}$ satisfy $\ti t_i=1,$ and as before,    $ (M, \ti g_i(\ti t),y_i)_{\ti t\in (0,\lambda^2_i)}  \to (X,h(t),p)_{t\in (0,\infty)}$
as $i\to \infty,$  where $(X,h(t),p)_{t\in (0,\infty)}$ is a solution to Ricci flow satisfying $\Rc(h(t)) \geq 0$ and $ |\Rm(h(t))| \leq c_0/t$  for all $t>0,$ which approaches $(\R^n,d(\delta),0)$ in the Gromov-Hausdorff sense as $t\downto 0,$ in view of the distance estimates \eqref{dist-est-app-A}. 

 The argument presented in the proof above before considering Case $1.1$ shows us that  $(X,h(t),p)$ is isometric to $(\R^n,d(\delta),0)$ for each $t>0.$ 
But $| \Rm(\ti g_i(\ti t_i))|(y_i) =  |\Rm(\ti g_i(1))|(y_i) \in  [\ep,c_0]$
    which is   a contradiction for large enough $i$.
 \end{proof}

\newpage
 \section{A splitting theorem of R.~Hochard for  the Ricci flow} 
 
 We recall an unpublished result due to Hochard \cite{HochardThesis}.
 
 \begin{prop}[\text{\cite[Lemma I.3.13]{HochardThesis}}]\label{prop:Hochard-splitting}
 Let $(M^n,g(t))_{t\in(0,T)}$ be a complete Ricci flow such that for $t\in(0,T)$ and for some $c_0>0$,
 \begin{equation}\label{eq:hochard_conditions}
 |\Rm(g(t))|\leq \frac{c_0}{t},\quad \operatorname{inj}_x(g(t))\geq\sqrt{\frac{t}{c_0}},\quad \Rc(g(t))\geq  0.
   \end{equation}
   Let $(M,d_0)\coloneqq \lim_{t\rightarrow 0^+}(M,d_{g(t)})$ denote the limit metric space obtained from Lemma \ref{SiTopThm1} and assume it is non-collapsed at all scales, i.e. $\mathcal{H}^n(B_{d_0}(x,r))\geq vr^n$ for all $r>0$ and $x\in M$ for some $v>0$. Assume furthermore that there exists a metric space $(X,d_X)$ and for some $1\leq m\leq n$ an isometry  
 $$ \phi: (X\times\R^m,d_{X\times \R^m}) \to (M,d_0),$$ 
 where $d_{X\times \R^m}$ denotes the product metric on $X\times \R^m$. Then  
 \begin{enumerate} 
\item  $X$ can be given a smooth manifold structure, that is there exists an isometry $\Psi: (N,d_N) \to (X,d_X)$   where $N$ is a smooth manifold, the metric $d_X$ not necessarily being smooth.\\
\item    For the resulting isometry 
 $\si : (N \times \R^m,d_{N \times \R^m} ) \to (M,d_0),$  $\si(y,r) := \phi(\Psi(y),r) $, the following holds:\\
\   a) There exists a smooth complete Ricci flow solution   $(N,h(t))$   such that $(N,d_{h(t)})$ converges to  $(N,d_N)$ in the distance sense as $t\downto 0$,\\
 \   b) $\si^{*}g(t) = h(t) \oplus \de_{\R^m}$ for all $t\in (0,T),$ where $\de_{\R^m}$ is the standard Euclidean metric on $\R^m.$
\end{enumerate}
 \end{prop}
 
 The proof of Proposition \ref{prop:Hochard-splitting} uses the notion of a $\delta$-static point.
  In our setting it is sufficient to consider a slightly stronger property, which we 
call {\it strongly-$\delta$ static}. For the sake of completeness we define also
{\it $\de$-static points}, as they were defined originally in the thesis of Hochard, 
see \cite[Definition I.3.14]{HochardThesis}. 

 \begin{defn}[\text{\cite[Definition I.3.14]{HochardThesis}}]\label{defn-static-pt}
Let $(M^n,g(t))_{t\in[0,T)}$ be a solution to Ricci flow (which is not assumed to be complete for each time slice a priori) and let $\delta>0$. A point $x\in M$ is said to be {\it strongly $\delta$-static} (respectively {\it  $\delta$-static}) on $[0,t]$ if for all $s\in[0,t]$, the geodesic ball $B_{g(s)}\big(x,\frac{\gamma(n)}{\delta}\sqrt{s}\big)$ is relatively compact in $yM$  and if $ |\Rm(g(s))|_{g(s)}\leq \frac{\delta^2}{\gamma(n)^2s}$ (respectively $ |\Rc(g(s))|_{g(s)}\leq \frac{\delta^2}{\gamma(n)^2s}$)
on $B_{g(s)}\big(x,\frac{\gamma(n)}{\delta}\sqrt{s}\big)$, where $\gamma(n)$ is the constant from \eqref{distestinit} 
 \end{defn} 
  {\bf Remark}:  By definition, $p$ is {\it strongly $\delta$-static} on $[0,t]$ implies 
  $p$ is {\it $c(n)\delta$-static} on $[0,t].$ 
 \begin{lemma}\label{lemma-existence-static-pt}
 Let $(M^n,g(t))_{t\in[0,T]}$  be a complete solution to Ricci flow satisfying \eqref{eq:hochard_conditions} and assume it is non-collapsed at all scales, i.e. $\mathcal{H}^n(B_{d_0}(x,r))\geq vr^n$ for all $r>0$ and $x\in M$ for some $v>0$. 
 
Then for all $\de>0$ and  all   $x_0\in M$ such that $B_{d_0}(x_0,1) \Subset M,$ 
there exists an $x\in B_{d_0}(x_0,\delta)$ and $r(\de,v,n)>0$, $S=S(\de,v,n)>0$ such that
$$|\Rm(g(t)) |_{g(t)} \leq \frac{\de^2}{t},\quad\text{ on $B_{d_0}(x,r)$ for all $t\in   [0,T]\cap[0,S]$.}$$
  In particular, for each $x_0\in M$ and $\delta>0$, there exists an $\ep=\ep(\delta,n) >0$ and a strong $\delta$-static point $x\in B_{d_0}(x_0,\delta)$ on $[0, \ep(\de,n)^2]$.
 \end{lemma}
 
 \begin{rmk}
 Lemma \ref{lemma-existence-static-pt} proves a stronger statement than \cite[Corollary I.3.17]{HochardThesis} in the sense that the full curvature tensor is shown to be smaller than expected on an appropriate ball. However, \cite[Lemma I.3.16]{HochardThesis} 
 proves  the existence of $\de$-static points  in a more general setting.
 \end{rmk}
 \begin{proof}
 Let $\de>0$ and $n\in \N$ be fixed, and assume the lemma  is incorrect.
 Then we get a sequence of solutions $(M_i,g_i(t))_{t\in [0,T_i]}$ and points $x_{0,i}$ such that 
 $B_{d_{0}}(x_{0,i},1)   \Subset M$ but the statement fails.
 A subsequence of the   starting balls $(B_{d_{0,i}}(x_{0,i},1) ,d_{0,i}, x_{0,i})$ (respectively starting spaces $(M_i ,g_i(0), x_{0,i})$)  converges  in the pointed Gromov Hausdorff sense to $(B_{d_{0}}(x_{0},1),d_{0}, x_{0})$ (respectively $(X,d_0,x_{0})).$ 
 Take a point $y \in B_{d_{0}}(x_{0},\frac 1 2)$ which is a regular point: such a point exists due to \cite[Theorem $6.1$]{Cheeger-Colding}.
Scale $(X,d_0)$  by a large constant $C$ so that (after scaling)  $B_{d_0}(y,1)$ is $\ep$-Gromov-Hausdorff close to a Euclidean ball.
Then we have that $B_{d_{0,i}}(y_i,1)$ is $2\ep $ Gromov-Hausdorff close to 
a Euclidean ball, for $i$ large enough, where we have scaled all solutions and starting spaces by the same large constant $C$. 
Since $\Rc(g_i(0))\geq 0$, the volume convergence theorem \cite[Theorem $5.9$]{Cheeger-Colding} implies that the volume of the balls
$B_{d_{0,i}}(y_i,1)$  are $\ti \ep(\ep,n)$ close to the volume of the Euclidan ball. 
Now, using \cite[Corollary 1.3]{Cav-Mon},  then shows us that
 the isoperimetric profile of $B_{d_{0,i}} (y_i,1)$ is $\psi(\ep,n)$-close to the Euclidean isoperimetric profile of a ball of radius one, where  $\psi(\ep,n)  $  is a constant depending only on $n$ and $\ep$ and  $\psi(\ep,n)  \to 0$ as $\ep \to 0$ for fixed $n\in \N$.

Thus Perelman's Pseudolocality theorem \cite[Theorem 10.1]{P1} implies that  
\begin{equation*}
|\Rm(g_i(t))|_{g_i(t)}\leq  \frac{\de^2}{t},\quad\text{  for all $t\in (0,L(n,\ti \ep))\cap [0,T_i]$ on $B_{d(g_i(t))}(y_i,R(n,\ti \ep)),$}
\end{equation*}
 and then on $B_{d_{0,i}}(y_i,\ti R(n,\ti \ep)),$ for all $t\in (0,\ti L(n,\ti \ep ))$ using the distance estimates from Lemma \ref{SiTopThm1}, if $\ep$ is chosen small enough. 
Scaling back by the small  $\frac{1}{C}$, we see that this  contradicts the fact that we cannot find a uniform radius and time  centered around any point in $B_{d_{0,i}}(x_{0,i},1)$ for which $|\Rm(g_i(t))|_{g_i(t)}\leq \de^2/t$  holds.


As for the last statement, let $\de>0$ and $x_0\in M$ such that $B_{d_0}(x_0,1) \Subset M.$ 
Then by what we have just proved applied to $\delta/\gamma(n)$, there exists an $x\in B_{d_0}(x_0,\delta)$ and $r=r(\de,v,n)>0$, $S=S(\de,v,n)>0$ such that
$$|\Rm(g(t)) |_{g(t)} \leq \frac{\de^2}{\gamma(n)^2t},\quad\text{ on $B_{d_0}(x,r)$ for all $t\in   [0,T]\cap[0,S]$.}$$
According to Lemma \ref{SiTopThm1} (and its notations), $B_{g(s)}\left(x,\frac{\gamma(n)}{\delta}\sqrt{s}\right)\subset B_{d_0}(x,r)$ provided $s\in[0,\ep(\de,v,n)^2]$ where $$\ep(\de,v,n)^2:=\min\left\{S,T,\left(\frac{ r}{\gamma(n)}\frac{1}{\sqrt{c_0}+\delta^{-1}}\right)^2\right\}.$$

\end{proof}

 \begin{proof}[Proof of Proposition \ref{prop:Hochard-splitting}]
 We will only give a proof for $m=1$. Without loss of generality, after rescaling, one can assume $T=1$. Let $\phi:(X \times \R, d_{X \times \R}) \to M$ be the homeomorphism which is an isometry at time $0$
$(M,d_t) \to (M,d_0)$ as $t \downto 0$ and $\phi:(X \times \R,d_{X \times \R}) \to (M,d_0)$ is an isometry so that $d_0((y,u),(y',u'))^2=d_X(y,y')^2+|u-u'|^2$ for all $(y,u)$ and $(y',u')$ in $X\times\R$.

 Let $\delta>0$ and $R>0$. Let $x_0:=(y_0,0)\in X\times\R$ and consider the points $p_{\pm R}:=\phi((y_0,\pm R))$. 
 
 Given $\delta >0$,  let $\ep= \ep(\de,n)>0$ be the constant   from  Lemma \ref{lemma-existence-static-pt}. 
 Applying 
   Lemma \ref{lemma-existence-static-pt}
   to the   rescaled solution $\ti g(t) =  \ep^2  g(\frac{1}{\ep^2} t)$  on  $B_{\ti d_0}(p_{+ R},1)$) respectively 
 $B_{\ti d_0}(p_{-R},1)$,     we obtain    $\delta$-static points   $p'_{ + R} \in B_{d_0}(p_{+ R},\delta{\ep}^{-1})$ 
  respectively  $p'_{ -R} \in B_{d_0}(p_{-R},\delta{\ep}^{-1})$ 
  for the solution
 $g(t)$ on the time interval $[0,1].$
 
  Let us define the following functions:
 \begin{equation*}
 \begin{split}
   b_{\pm,R}(x,t)&:=d_t(p'_{\pm R},x)-d_t(p'_{\pm R},x_0),\\
 \mathcal{E}_{R}(x,t)&:=d_t(p'_{+R},x)+d_t(p'_{-R},x)-d_t(p'_R,p'_{-R}),
  \end{split}
 \end{equation*}
 and observe that by the triangle inequality, one has for all $x\in M$ and $t>0$:
 \begin{equation}\label{excess-fct-controls-b}
 \mathcal{E}_{R}(x,t) \geq b_{+R}(x,t)+b_{-R}(x,t)\geq - \mathcal{E}_{R}(x_0,t).
\end{equation}
   Using  the fact that $p'_R,p'_{-R}$ are $\de$-static on $[0,1]$ (see \ref{defn-static-pt}), we may apply  
Perelman's distance estimates, Lemma 8.3 of \cite{P1},  to obtain that the Lipschitz function (in time) 
$d_{t}(p'_R,p'_{-R})$ satisfies 
  $\partt d_{t}(p'_R,p'_{-R}) \geq -\frac{\de}{\sqrt{t}}$ at all points $t\in [0,1]$  where the function is differentiable, and hence
  $d_t(p'_R,p'_{-R})\geq d_0(p'_R,p'_{-R})-\delta\sqrt{t}.$  Continuity of $d_t$ up to $t=0$ and non-negativity of the Ricci curvature of the flow gives that $d_t\leq d_0$ on $M$ for $t\in[0,1],$

 Therefore, 
 \begin{equation}\label{est-excess-t}
  0\leq \mathcal{E}_{R}(x,t)\leq \mathcal{E}_{R}(x,0)+\delta\sqrt{t},\quad\text{ for all $x\in M$ and $t\in[0,1]$.}
  \end{equation}
 
 Now, for $(x,t)\in M\times[0,1]$, the triangle inequality gives,
 \begin{equation*}
 \begin{split}
 d_t(p'_R,x)&\geq d_t(p'_R,p'_{-R})-d_t(p'_{-R},x)\\
 &\geq d_0(p'_{R},p'_{-R})-d_0(p'_{-R},x)-\delta\sqrt{t}\\
 &=d_0(p'_R,x)-\mathcal{E}_{R}(x,0)-\delta\sqrt{t},
 \end{split}
 \end{equation*}
 which thanks to $d_t(p'_R,x_0)\leq d_0(p'_R,x_0)$  leads to $$d_t(p'_R,x)-d_t(p'_R,x_0)\geq d_0(p'_R,x)-d_0(p'_R,x_0)-\mathcal{E}_{R}(x,0)-\delta\sqrt{t}.$$
 By reversing the roles of $x$ and $x_0$, one gets:
 $$d_t(p'_R,x)-d_t(p'_R,x_0)\leq d_0(p'_R,x)-d_0(p'_R,x_0)+\mathcal{E}_{R}(x_0,0)+\delta\sqrt{t},$$
 which gives:
 \begin{equation}\label{excess-busemann}
-\mathcal{E}_{R}(x,0)-\delta\sqrt{t}\leq b_{+,R}(x,t)-b_{+,R}(x,0)\leq \mathcal{E}_{R}(x_0,0)+\delta\sqrt{t}.
\end{equation}

By the product structure of $d_0$, one can check by a direct computation that there exists a universal constant $c>0$ such that for $x\in B_{d_0}(x_0,\delta^{-1})$ and $R\geq 20(\de^{-1} + \de\ep^{-1})$:
  \begin{equation}\label{hello-euclidean-geometry}
  \mathcal{E}_{R}(x,0)  \leq \frac{c}{R}\left (\frac{1}{\de^2} + \frac{\delta^2}{\ep^2}  \right).
  \end{equation}
We give here an explanation of this fact. \\

At time zero, we consider everything on $(X\times \R, d_{X \times \R}).$
Since the distance $d_0$ on $X \times \R  $ is given by the product structure, and since  $p'_{ \pm R}\in B_{d_0}((y_0, \pm R),\delta{\ep}^{-1})$, we see, using   the triangle inequality, that $p'_{ + R} =( y_{+},R+\al_{+} )$    and  $p'_{ - R} =(  y_{-},-R+\al_{-} )$ for some $ \al_{\pm} \in \R,$ and 
 \begin{equation}\label{hello-easy-est}
\max\{d_X(y_{+},y_0), d_X(y_{-},y_0),d_X(y_{+},y_{-}),   |\al_{\pm}|\}  \leq d_0(p_{\pm R},p'_{\pm R})\leq   \delta\varepsilon^{-1}.
 \end{equation}
Also, remembering $x_0 = ( y_0,0)$, we see that $x\in B_{d_0}(x_0,\delta^{-1})$
means $x = (y,\alpha)$ for some $\alpha\in \R$ and $y \in X$  where $|\alpha| \leq  \delta^{-1}  $ and $d_X( y, y_0) \leq   \delta^{-1} .$ 
In particular, $d_1 := d_X( y, y_{+}) \leq   \delta\varepsilon^{-1} +   \delta^{-1} ,$ 
and  $d_2 := d_X( y, y_{-}) \leq   \delta\varepsilon^{-1} +   \delta^{-1},$
and $d_3:= d_X(y_+,y_{-})  \leq 2\delta\varepsilon^{-1} +   2\delta^{-1}$ 
in view of the triangle inequality.  

We insert this information, and the definition of $d_0$  into the definition  of $\mathcal{E}_{R}(\cdot,0)$ to get: 
 \begin{equation*}
 \begin{split}
  \mathcal{E}_{R}(x,0) & = d_0( (y_{+} ,R+\al_{+}),(y,\alpha)) + d_0( (y_{-} ,-R+\al_{-}),(y,\alpha)) \\
 &\quad -d((y_{+} ,R+\al_{+}), (y_{-} ,-R+\al_{-})\\
&  = \sqrt{ (R + {\al_{+}} -\alpha)^2 +(d_X( y, y_{+}))^2 }  +  \sqrt{ (-R + {\al_{-}} -\alpha)^2 + (d_X( y, y_{-}))^2 } \\
&  \quad - \sqrt{   (2R +\al_{+} - \al_{-})^2 +d_X^2(y_+,y_{-})   }\\
& =   \sqrt{ (R + b_1)^2 +d_1^2 }  +  \sqrt{ (-R + b_2)^2 +d_2^2 }  - \sqrt{   (2R + (b_1 - b_2))^2 +d_3^2   },
\end{split}
 \end{equation*}
where $b_1:=  \al_{+} -\alpha$, $b_2 := \al_{-} -\alpha$, which implies
$b_1-b_2=  \al_{+}- \al_{-}.$  
We claim that  $$ \sqrt{ (R + b_1)^2 +d_1^2 }  +  \sqrt{ (-R + b_2)^2 +d_2^2 }  \leq \sqrt{   (2R + (b_1 - b_2))^2 +d_3^2   } + 
20  \frac{v}{R},$$ where $v := b_1^2 + b_2^2 + d_1^2 +d_2^2+d_3^2\leq 10(\de^2\ep^{-2}+\de^{-2}) $ under the  assumption that $ R \geq 20(\delta^{-1}+\delta\ep^{-1})$ which ensures in particular that $R\geq 10|b_1|+10|b_2|.$   This will imply  the desired result \eqref{hello-euclidean-geometry}. Now,
\begin{equation*} 
\begin{split}
 \sqrt{   (2R + (b_1 - b_2))^2 +d_3^2   } + 
20 \frac{v}{R}  
   & \geq 2R + (b_1 - b_2) +  
20  \frac{v}{R} \\
& = \Big( (R +  b_1) + 10 \frac{v}{R}\Big)  + \Big( (R - b_2)+ 10 \frac{v}{R} \Big).
\end{split}
\end{equation*}
The term $\Big( (R +  b_1) + 10 \frac{v}{R}\Big)$ may be estimated as follows:
\begin{equation*}
\begin{split} 
  (R +  b_1) + 10 \frac{v}{R} 
 & =
\sqrt{ \Big( (R +  b_1) + 10 \frac{v}{R}\Big) ^2} \\
&=  \sqrt{   (R +  b_1)^2  + 20  \frac{v(R+b_1)}{R} +  100 \frac{v^2}{R^2}}\\
& \geq \sqrt{   (R +  b_1)^2  + 10\,v }\\
& \geq \sqrt{   (R +  b_1)^2  + d_1^2}.
\end{split}
\end{equation*}
Similarly  $ (R - b_2) + 10 \frac{v}{R} \geq \sqrt{   (R -  b_2)^2  + d_2^2},$ so that, 
$$ 
  \mathcal{E}_{R}(x,0) \leq 20 \frac{v}{R}, $$ as required.

Let us choose now a sequence $(\delta_i)_i$ converging to $0$ and let $R_i:=c\delta_i^{-1}(\de_i^{-2}+\ep_i^{-2}\de_i^2).$ To points $p_{\pm R_i}$ correspond $\delta_i$-static points on $[0,1]$, $p^{\pm}_i:=p'_{\pm R_i}$ which lie in $B_{d_0}(p_{\pm R_i}, \ep_i^{-1}\delta_i)$. For each $i$ and $t\in(0,1]$, let $\gamma_{i,t}^{\pm}$ be a minimizing geodesic between $x_0$ and $p^{\pm}_i$. Up to extracting a subsequence, $\gamma_{i,t}^{\pm}$ can be assumed to converge to a ray $\gamma_{t}^{\pm}$ originating from $x_0$. 

We claim that the concatenation of $\gamma_t^-$ and $\gamma_t^+$ is a geodesic line which goes through $x_0$, which we denote $\ga_t: \R \to M,$ with respect to $g(t)$ for all $t\in(0,1]$. 

Indeed, for $t\in(0,1]$ and $s_{\pm}\geq0$, then for $i$ sufficiently large such that 
\begin{equation*}
\min\{d_t(x_0,p_{i}^{+}),d_t(x_0,p_{i}^{-})\}\geq \max\{s_+,s_{-}\},
\end{equation*}
one gets thanks to the fact that $\gamma_{i,t}^{\pm}$ are rays with respect to $g(t)$,
\begin{equation*}
\begin{split}
0&\leq d_t(\gamma_{i,t}^+(s_+),x_0)+d_t(x_0,\gamma_{i,t}^-(s_{-}))-d_t(\gamma_{i,t}^+(s_+),\gamma_{i,t}^-(s_{-}))\\
&=d_t(p_i^+,x_0)+d_t(x_0,p_i^{-})-d_t(p_i^+,p_i^{-})\\
&\quad +d_t(p_i^+,p_i^{-})+(s_+-d_t(p_i^+,x_0))+(s_{-}-d_t(x_0,p_i^{-}))-d_t(\gamma_{i,t}^+(s_+),\gamma_{i,t}^-(s_{-}))\\
&\leq \mathcal{E}_{R_i}(x_0,t)+d_t(p_i^+,\gamma_{i,t}^+(s_+))+\underbrace{(s_+-d_t(p_i^+,x_0))}_{=-d_t(p_i^+,\gamma_{i,t}^+(s_+))}\\
&\quad+d_t(\gamma_{i,t}^-(s_{-}),p_i^{-})+\underbrace{(s_{-}-d_t(x_0,p_i^{-}))}_{=-d_t(p_i^-,\gamma_{i,t}^+(s_-))}\\
&\leq \mathcal{E}_{R_i}(x_0,t)\leq \mathcal{E}_{R_i}(x_0,0)+\delta_i\sqrt{t},
\end{split}
\end{equation*}
where we have used the triangle inequality, the fact that 
 $\mathcal{E}_{R_i}(x_0,t)  = d_t(p_i^+,x_0)+d_t(x_0,p_i^{-})-d_t(p_i^+,p_i^{-})$ per definition, and   the  estimate \eqref{est-excess-t} in the last line.

Thanks to estimate \eqref{hello-euclidean-geometry} applied to $\delta_i$, $\varepsilon_i$ and $R_i$ as defined above, one gets the expected claim by letting $i$ tend to $\infty$. 

Now, the  functions 
$ b_{\pm,R_i}(x,t):=d_t(p'_{\pm R_i},x)-d_t(p'_{\pm R_i},x_0)$  
are Lipschitz with Lipschitz constant equal to $1$, and differentiable almost everywhere, 
and bounded from above and below, in view of the triangle inequality, 
\begin{equation*}
 b_{\pm,R_i}(x,t) = d_t(p'_{\pm R_i},x)-d_t(p'_{\pm R_i},x_0) \geq -d_t(x,x_0),\quad\text{ and}\quad
 b_{\pm,R_i}(x,t) \leq d_t(x,x_0).
\end{equation*}
Hence taking a limit up to a subsequence, we obtain two functions $b_{\pm,\,t}:M \to \R$ which are Lipschitz, with Lipschitz constant $1.$
Furthermore, for  points $x_i = \ga^{+}_{i,t}(s)$ we have  $b_{\pm,R_i}(\ga^{+}_{i,t}(s),t) 
= d_t(p'_{\pm R_i}, \ga^{+}_i(s) )-d_t(p'_{\pm R_i},x_0) = -s,$ which guarantees that 
$b_{\pm,\,t}(\ga_t(s))= \lim_{i\to \infty} b_{\pm,R_i}(\ga^{+}_{i,t}(s),t) =-s.$

Now, by mimicking the proof of Cheeger-Gromoll's Splitting Theorem, thanks to \eqref{excess-fct-controls-b}, \eqref{excess-busemann} and \eqref{hello-euclidean-geometry}, we infer that on $M$,
\begin{equation}
b_{+,\,t}+b_{-,\,t}=0.
\end{equation}
Since $\Delta_{g(t)}b_{\pm,\,t}\leq 0$ in the weak sense, we also get that $b_{+,\,t}$ is a weak harmonic function hence a smooth harmonic function by elliptic regularity. Furthermore, the Bochner formula applied to $b_{+,\,t}$ gives that $|\nabla^{g(t)}b_{+,\,t}|_{g(t)}^2$ is a subharmonic function since $\Rc(g(t))\geq 0$ by assumption:
\begin{equation*}
\Delta_{g(t)}|\nabla^{g(t)}b_{+,\,t}|^2_{g(t)}=2|\nabla^{g(t),2}b_{+,\,t}|^2_{g(t)}+2\,\Rc(g(t))( \nabla^{g(t)}b_{+,\,t},\nabla^{g(t)}b_{+,\,t})\geq 0.
\end{equation*}

As $|\nabla^{g(t)}b_{+,\,t}|^2_{g(t)}\leq 1$ on $M$ and since $|\nabla^{g(t)}b_{+,\,t}|^2_{g(t)}= 1$ on the line $\gamma_t$, the strong maximum principle shows that $|\nabla^{g(t)}b_{+,\,t}|^2_{g(t)}= 1$ on $M$.

Finally, the estimate \eqref{excess-busemann} above shows us that $b_{\pm,\,0}(x)  := \lim_{i\to \infty} b_{\pm,R_i}(x,0)=\lim_{i\to \infty} b_{\pm,R_i}(x,t) =: b_{\pm,\,t}(x).$

Therefore, there exists a smooth function $b_{+,\,0}$ independent of time $t\in(0,1]$ with gradient equal to $1$ and whose Hessian vanishes identically with respect to each metric $g(t)$, $t\in(0,1]$.

Let $(h(t))_{t\in(0,1]}$ be the family of metrics on the $(n-1)$-dimensional smooth manifold $N:=b_{+,\,0}^{-1}(\{0\})$ induced by $(g(t))_{t\in(0,1]}$. As $(N,h(t))$ is totally geodesic in $(M,g(t))$ for each $t\in(0,1]$, the family $(N,h(t))_{t\in(0,1]}$ is a solution to the Ricci flow.

Moreover, for each time $t\in(0,1]$, there exists a diffeomorphism $\varphi_t:N\times\R\rightarrow M$ such that $\varphi_t^*g(t)=dr^2\oplus h(t)$. More precisely, if $(\psi^t_{r})_{r\in\R}$ denotes the flow generated by $\nabla^{g(t)}b_{+,0}$ such that $\psi^t_{r=0}=\operatorname{Id}_M$ then $\varphi_t(y,r):=\psi^t_{r}(y)$ for $y\in N$. In particular, since $\Rc(\varphi_t^*g(t))(\partial_r,\cdot)=0$ and since the Ricci curvature is invariant under diffeomorphism, $\Rc(g(t))(\nabla^{g(t)}b_{+,0}\,,\cdot)=0$ which implies that the vector fields $\nabla^{g(t)}b_{+,0}$ are time-independent for $t\in(0,1]$ since $\partial_tg(t)=-2\Rc(g(t))$. Therefore, $\psi^t_{r}=\psi^1_{r}$ for all $r\in\R$ and $t\in(0,1]$ which implies $\varphi_t=\varphi_1$ for each $t\in(0,1]$ so that $\varphi_1^*g(t)=dr^2\oplus h(t)$ for $t\in(0,1]$. It also shows there is a common line $\gamma$ such that $\gamma(0)=x_0\in M$ with respect to each $g(t)$, $t\in(0,1]$.


Using very similar calculations to the ones used above  to show 
$\lim_{i\to \infty} \mathcal{E}_{R_i}(x ,0) = 0$, we see  $b_{+,\,0}( \phi(y,\alpha)) = -\alpha$ for all $(y,\alpha) \in X \times \R$.

Indeed, by a slight abuse of notation, examining the quantities on $X \times \R$, we have  
\begin{equation*}
\begin{split}
\left| b_{+,\,R}(( y,\al),0) +\alpha\right|&=\left| d_{X \times \R}((y_{+},R + \al_{+}), (y,\al)) - d_{X \times \R}((y_{+},R +\al_+), (y_0,0)) +\alpha\right|\\
&=\left|  \sqrt{ ( R + \al_{+}-\al)^2 + d_X(y_{+},y)^2} - \sqrt{ ( R + \al_{+} )^2 + d_X(y_{+},y_0)^2}+\alpha\right|
\\
&\leq   \left|(R + \al_{+}-\al ) - (R + \al_{+}) +\alpha\right| +10\frac{v}{R}  \\
&\leq  10\frac{v}{R}  \to 0,
\end{split}
\end{equation*}
 as $R= R_i \to \infty$ if we take a sequence  $R=R_i  $ as above. 

Since the geodesic $\ga: \R \to M$ with $\ga(0) = \phi(y_0,0)$ is a geodesic for all times $t\in(0,1]$, we see that it is also one at time $t=0$.
Writing $\hat \ga(s)= \phi^{-1}(\ga(s)) = (y(s),r(s))$, we obtain a geodesic in $(X \times \R, d_{X \times \R}).$
From the above $-s= b_{+,\,0}(\ga(s)) = \varphi^*b_{+,\,0}(\hat \ga(s)) = 
\varphi^*b_{+,\,0}(y(s),r(s)) = -r(s).$ That is $r(s)=s$.
As $s \to  \hat \ga(s) = (y(s),r(s)) =(y(s),s)$ is a   geodesic satisfying $d_{X \times \R} (\hat \ga(s), \hat \ga(s')  ) =|s-s'|$, for all $s$, $s'$ in $\R$, in the product space, we see that
$y(s)=y_0$.
Hence the geodesic line  $\ga$ at time $t=0$  is mapped onto the geodesic line  $s\to (y_0,s)$ in the product $(X\times \R)$ via $\phi^{-1}$ as required, and we see that $\ga(s) = \phi(y_0,s)$ for all $s \in \R.$
Since $(N,h(t))_{t\in (0,T)}$ also satisfies $\Rc(h(t)) \geq 0$ and $|\Rm(h(t))|\leq \frac{c_0}{t}$ we see that
$ d_{h(t)} \to  d_N$ as $t\downto 0$  for some metric $d_N$ on $N$ satisfying  $  d_{h(t)} -\ga(n)\sqrt{c_0t}   \geq d_N \geq d_{h(t)}.$
   Defining  $\Psi:N \to X$ by $\Psi(y) = \Pi_X(\phi^{-1}(\phi_1(y,0)))$ where $\Pi_X(x,s) =x, $
we see that $\Psi:N \to X$ is a homeomorphism which also satisfies that $\Psi: (N,d_N) \to (X,d_X) $ is an isometry, and the map  $\si:(N\times \R) \to M,$ $\si(y,r) = \phi(\Psi(y),r)=\varphi_1(y,r) $ is a diffeomorphism which also satisfies that
$\si : (N\times \R,d_{N\times \R})  \to  (M,d_0)$ is an isometry.
Furthermore
$\si^{*}g(t) = \varphi_1^*g(t) = h(t)\oplus dr^2,$ as required.

 \end{proof}
\end{appendix}
\newpage 
\bibliography{ham-conj}
\bibliographystyle{amsalpha}

\end{document}